\numberwithin{equation}{section}
\newtheorem{Theorem}{Theorem}[section]
\newtheorem{Lemma}[Theorem]{Lemma}
\theoremstyle{definition}
\theoremstyle{remark}
\def\eps{\epsilon}
\def\0{{\bar 0}}
\def\1{{\bar 1}}
\def\C{\mathbb C}
\def\Z{\mathbb Z}
\def\g{{\mathfrak g}}
\def\gl{\mathfrak{gl}}
\def\so{\mathfrak{so}}
\def\sp{\mathfrak{sp}}
\def\col{\operatorname{col}}
\def\End{\operatorname{End}}
\def\Hom{\operatorname{Hom}}
\def\row{\operatorname{row}}
\def\mod-{\text{mod-}}
\def\-mod{\text{-mod}}
\newdimen\Hoogte    \Hoogte=12pt    
\newdimen\Breedte   \Breedte=12pt   
\newdimen\Dikte     \Dikte=0.5pt    
\newenvironment{Young}{\begingroup
       \def\vr{\vrule height0.8\Hoogte width\Dikte depth 0.2\Hoogte}
       \def\fbox##1{\vbox{\offinterlineskip
                    \hrule height\Dikte
                    \hbox to \Breedte{\vr\hfill##1\hfill\vr}
                    \hrule height\Dikte}}
       \vbox\bgroup \offinterlineskip \tabskip=-\Dikte \lineskip=-\Dikte
            \halign\bgroup &\fbox{##\unskip}\unskip  \crcr }
       {\egroup\egroup\endgroup}
\def\Diagram#1{\relax\ifmmode\vcenter{\,\begin{Young}#1\end{Young}\,}\else%
              $\vcenter{\,\begin{Young}#1\end{Young}\,}$\fi}
\title[]
{\boldmath Finite $W$-algebra invariants via Lax type operators}
\author{Jonathan Brown}
\address{ Department of Mathematics, Computer Science and Statistics,
                      State University of New York,
                      Oneonta, NY 13820, USA}
\email{Jonathan.Brown@oneonta.edu}
\thanks{2010 {\it Mathematics Subject Classification}: 17B10, 17B37.}
\begin{document}

\begin{abstract}
We use variations on Lax type operators to find explicit formulas for certain elements of finite $W$-algebras.
These give a complete set of generators for all finite $W$-algebras of types
B,C,D for which the Dynkin grading is even.

\end{abstract}

\maketitle

\section{Introduction}
In \cite{SKV} the authors construct a Lax type operator using Dirac reductions and generalized  quasideterminants
to find an explicit formula for generators of a subalgebra of a finite $W$-algebra.
Their formula applies to all types, and it includes the highest degree generators (as defined in $\S$5) and thus most computationally difficult to find
generators of the finite $W$-algebra.
In this paper we develop operators
based on the Lax type operator in \cite{SKV}.  These operators produce explicit formulas for additional finite $W$-algebra elements beyond those found in \cite{SKV}, and in some cases gives generators for the finite $W$-algebra.

We use the definition of finite $W$-algebras from \cite[$\S$2.1]{BGK}, except following \cite{SKV} we swap the roles of $e$ and $f$.
For this definition,
let $\g$ be a reductive Lie algebra over $\C$ equipped with
a symmetric non-generate equivariant
bilinear form 
$\langle \cdot \mid  \cdot \rangle$, and let $f$ be a nilpotent element in $\g$.
To define the finite $W$-algebra,
embed $f$ into an $\mathfrak{sl}_2$-triple $(f, 2x, e)$.
For $k \in \mathbb{Z}$ let
$\g_k \subset \g$
denote the set of eigenvectors for $\operatorname{ad} x$ with eigenvalue $k$.
Let $J$ be the left $U(\g)$ ideal generated by $\{m - \langle f \mid m \rangle  \mid m \in \g_{\geq 1} \}$. 
Now the finite $W$-algebra is defined via
\begin{equation} \label{L:walgdef}
U(\g,f) = (U(\g)/J)^{\g_{\geq 1/2}}.
\end{equation}

The Lax type operator depends on $\phi : \g \to \operatorname{End}(V)$,
a faithful finite-dimensional representation of $\g$.
For $k \in \mathbb{Z}$ let
$V[k] \subset V$
denote the set of eigenvectors for $x$ with eigenvalue $k$.
(We will also define the subspaces $V[<k], V[\geq k]$, etc in the obvious way.)
In general if we talk about the weight of a vector with no other context, we means its eigenvalue for $x$.
Let
$\tfrac{\xi}{2}$ be the highest eigenvalue for $x$ on $V$.
Let $\{u_i\}_{i \in I}$ be a basis of $\g$ consisting  $\operatorname{ad} x$ eigenvectors,
let $\{u^i\}_{i \in I}$ denote the dual basis with respect to the form on
$\g$, let $U_i = \phi(u_i)$, let $U^i = \phi(u^i)$, and let $F = \phi(f)$.
For a homogeneous $y \in \g_k$ or $v \in V[k]$ let
\[
\delta(y) =\delta(v) = k.
\]
Let
\begin{equation} \label{EQ:D}
  D =  \sum_{u_i \in \g_{\geq 1}} U^i U_i \in \End(V)
\end{equation}
be a diagonal ``shift'' matrix.
Define
\[
Y = \mathbbm{1}_V + F+\sum_{u_i \in \g_{\leq 1/2}} z^{\delta(u_i)-1} u_i \otimes U^i + z^{-1} D
\in U(\g)[[z^{-1/2}]]\otimes \End(V).
\]
Here and throughout this paper we use the notation $\sum_{u_i \in \g}$ to denote the sum over our basis elements $\{u_i\}$.

For $k \in \tfrac{1}{2} \mathbb{Z}$
let $\Psi_{-k} : V[-k] \hookrightarrow V$ denote inclusion and $\Pi_{k} : V \twoheadrightarrow V[k]$ be projection.
Define $\Psi_{< d /2}, \Phi_{> -\tfrac{\xi}{2}}$, $\Psi_{[k,k)}$ etc similarly.
Also if $W$ is a subspace of $V$ spanned by certain weight vectors,
let the complement of $W$ be the span
of all the other weight vectors.
Now $\Pi_W$ and $\Psi_W$ denote projection and inclusion for $W$.
In \cite{SKV} they use the Lax type operator to generator certain elements of $U(\g,f)$.
This operator is defined via
\[
 L(z) = 
 \Pi_{\tfrac{\xi}{2}} Y \Psi_{-\tfrac{\xi}{2}}  - \Pi_{\tfrac{\xi}{2}} Y \Psi_{> -\tfrac{\xi}{2}} \left(\Pi_{<\tfrac{\xi}{2}} Y \Psi_{>-\tfrac{\xi}{2}}\right)^{-1} \Pi_{< \tfrac{\xi}{2}} Y \Psi_{-\tfrac{\xi}{2}},
\]
which is an element of 
$U(\g)[[z^{-1}]] \otimes \operatorname{Hom}(V[-\xi/2],V[\xi/2])$.
Actually this is a power of $z$ times the Lax type operator studied in \cite{SKV}; see $\S2$ below.
In this paper we 
use the Lax type operator and a new related operator to find formulas for additional elements of $U(\g,f)$
mainly in the case that $\g$ is of classical type.
In many cases these elements generate $U(\g,f)$.

The new operator we call the {\em right handed Lax type operator}.  It is defined via
\begin{align} \label{EQ:LR}
L_{R}(z) &= \left(\Pi_{<\tfrac{\xi}{2}} Y \Psi_{>-\tfrac{\xi}{2}}\right)^{-1} \Pi_{<\tfrac{\xi}{2}} Y \Psi_{-\tfrac{\xi}{2}} \\ \notag
& \quad \quad \in U(\g)[[z^{-1}]] \otimes \operatorname{Hom}(V[-\xi/2],V[>-\xi/2]).
\end{align}

Note that $L_{R}(z)$ occurs as a subexpression of $L(z)$.
Our first main theorem states that some of coefficients of $L_{R}(z) \bar 1$ are in $U(\g,f)$,
where $\bar 1$ denotes the image of $1$ in $U(\g)/J$.
\begin{Theorem} \label{T:main}
  Suppose that $V$ has highest weight vectors for the $\mathfrak{sl}_2$-triple $(f,2x,e)$ of different weights.
  Let $l \in \frac{1}{2} \Z$ be the largest number such that that $l < \xi/2$ and $l$ is the weight of a highest weight vector in $V$.
  Let $b \in \frac{1}{2}\Z$ be such that 
  $b > \xi/2-l$.
  Let $W$ be the subspace of V spanned by lowest weight vectors of weight $-l$.
  Then the coefficients of $z^{-b}$ in $\Pi_W L_{R}(z) \Psi_{-\tfrac{\xi}{2}} \bar 1$
  are  in $U(\g,f)$.
\end{Theorem}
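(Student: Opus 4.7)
Proof plan: I would verify that the given coefficient is $\g_{\geq 1/2}$-invariant by computing the commutator with each $a \in \g_{\geq 1/2}$ and showing it vanishes modulo $J$.

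First, the product rule applied to $L_R(z) = D^{-1}C$, with $D = \Pi_{<\xi/2} Y \Psi_{>-\xi/2}$ and $C = \Pi_{<\xi/2} Y \Psi_{-\xi/2}$, gives
\[
[a\otimes 1,\, L_R(z)] = D^{-1}\,\Pi_{<\xi/2}\,[a\otimes 1,\, Y]\,\tilde{\Psi},\qquad \tilde{\Psi} := \Psi_{-\xi/2} - \Psi_{>-\xi/2} L_R(z).
\]
By the very construction of $L_R(z)$, the ``corrected inclusion'' $\tilde{\Psi}$ satisfies $Y\tilde{\Psi} \in V[\xi/2] \otimes U(\g)[[z^{-1/2}]]$.

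Next, applying the $\g$-invariance of the Casimir tensor $\sum_i u_i \otimes u^i$ to the $z$-weighted sum in $Y$, I would rewrite
\[
[a\otimes 1,\, Y] = -z^{-\delta(a)}[1\otimes\phi(a),\, Y] + z^{-\delta(a)}[1\otimes\phi(a),\, F] + z^{-\delta(a)-1}[1\otimes\phi(a),\, D] - \Sigma(a),
\]
where $\Sigma(a) = \sum_{u_j \in \g_{\geq 1},\, \delta(u_j) \leq \delta(a) + 1/2} z^{\delta(u_j) - \delta(a) - 1}\, u_j \otimes [\phi(a), U^j]$ collects the extra terms indexed by $\g_{\geq 1}$. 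Upon substituting and simplifying, the principal piece $\Pi_{<\xi/2}\phi(a)Y\tilde{\Psi}$ vanishes because $\phi(a)(V[\xi/2]) \subseteq V[\xi/2 + \delta(a)] = 0$, while $\Pi_{<\xi/2} Y\phi(a)\tilde{\Psi}$ can be rewritten via the identity $D^{-1}\Pi_{<\xi/2}Y = L_R(z)\Pi_{-\xi/2} + \Pi_{>-\xi/2}$.

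Finally, apply $\Pi_W$ and reduce modulo $J$: each $u_j \in \g_1$ in $\Sigma(a)$ reduces to the scalar $\langle f \mid u_j \rangle$, while $u_j \in \g_{\geq 3/2}$ vanishes under $\bar 1$. A careful count shows every contribution to $\Pi_W [a\otimes 1, L_R(z)] \bar 1$ carries a $z$-power $z^{-b'}$ with $b' \leq \xi/2 - l$: the $\phi(a)$-shift terms reach $W \subset V[-l]$ only when $\delta(a) \leq \xi/2 - l$; the $[\phi(a), F]$- and $[\phi(a), D]$-corrections carry intrinsic $z$-powers $z^{-\delta(a)}$ and $z^{-\delta(a)-1}$, and the weight-shift bound forces $b' \leq \xi/2 - l$; and the $\Sigma(a)$-contributions sit at $z$-powers in $[-\delta(a), -1/2]$. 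The hypothesis $b > \xi/2 - l$ therefore kills the commutator at the coefficient of $z^{-b}$, which establishes invariance.

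The main obstacle is the final weight/$z$-power bookkeeping: carefully tracking how the $U(\g)$-coefficients of $L_R(z)$ inside $\tilde{\Psi}$ interact with the $\g_{\geq 1}$-reduction in $\Sigma(a)$, in order to rule out any spurious high-$z$-power contribution.
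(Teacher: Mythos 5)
Your overall strategy---compute $[a,L_R(z)]\bar 1$ for $a\in\g_{\geq 1/2}$ via the product rule and then argue that no contribution can reach the coefficient of $z^{-b}$ when $b>\xi/2-l$---is the same as the paper's, and your identity $[a,D^{-1}C]=D^{-1}\Pi_{<\xi/2}[a,Y]\tilde\Psi$ with $Y\tilde\Psi$ landing in $V[\xi/2]$ is correct (it is essentially Lemma \ref{L:help}). However, there are two problems. First, you work with $Y$ directly, which forces you to introduce the correction terms $\Sigma(a)$, $[1\otimes\phi(a),F]$ and $[1\otimes\phi(a),D]$; the clean identity $[m,Z]=z^{-\delta(m)}[Z,\phi(m)]$ only holds for the operator $Z$ of \eqref{EQ:Z}, whose sum runs over all of $\g$, and the paper sidesteps your bookkeeping entirely by invoking \eqref{EQ:ZY} (i.e.\ \cite[(5.26)]{SKV}) to replace $Y$ by $Z$ before commuting. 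Your route is salvageable but you never verify the claimed formula for $[a\otimes 1,Y]$ nor show that the extra terms cancel or reduce harmlessly modulo $J$.

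Second, and more seriously, the decisive step is asserted rather than proved. After Lemma \ref{L:help} the whole content of the theorem is the claim that the entries $(L_R(z)\bar 1)_{b,d}$, for $b$ a vector reached from $W$ by $\phi(a)$ (hence lying in a row of maximal length $\lambda_1$), are polynomials in $z^{-1/2}$ of degree at most $\xi/2-l=(\lambda_1-\lambda_i)/2$. A priori these entries are full power series in $z^{-1}$, so the term $z^{-\delta(a)}\,\Pi_W\phi(a)\Psi_{>-\xi/2}L_R(z)$ could contribute at $z^{-b}$ for every $b$; your bullet ``the $\phi(a)$-shift terms reach $W$ only when $\delta(a)\leq\xi/2-l$'' constrains which $a$ contribute but says nothing about the $z$-degree coming from $L_R(z)$ itself. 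The paper proves the degree bound using the explicit combinatorial formula for $L_R(z)_{c,d}$ (Theorem \ref{T:LzRexplicit}, obtained by expanding $(\alpha+\bar Y)^{-1}=\sum_m(-\alpha^{-1}\bar Y)^m\alpha^{-1}$), together with the maximality of $l$: since $\col(a_1)>\col(b)$, every index in a surviving monomial lies in a row of length $\lambda_1$, which forces $\col(a_i)\leq\col(d_i)<\col(a_{i+1})$ and hence bounds the total degree $\sum(\col(d_i)-\col(a_i)-1)$ by $(\lambda_1-\lambda_i)/2$. This is exactly the ``main obstacle'' you flag at the end; without it the proof is incomplete.
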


Our remaining results only apply when $\g$ is of classical type
and where all the parts of the Jordan type of $f$
have the same parity.

For a non-negative integer $k$
let
\begin{equation} \label{EQ:gk}
\g^k = \{ \Pi_{[-k,k]} X \Psi_{[-k,k]} \mid X \in \g \}
\end{equation}
where we are identifying $\g$ as a
subset of $\operatorname{End}(V)$.
Be careful not to confuse $\g^k$ with $\g_k$ defined above.
It is easy to see that
\[
  \left[ \Pi_{[-k,k]} X \Psi_{[-k,k]},
   \Pi_{[-k,k]} Y \Psi_{[-k,k]}  \right]
=   \Pi_{[-k,k]} [X,Y] \Psi_{[-k,k]},
\]
so $\g^k$ is a Lie subalgebra of $\g$.
Also let
\begin{equation} \label{EQ:fk}
f_k = \Pi_{[-k,k]} f \Psi_{[-k,k]}
\end{equation}
so that $f_k$ is a nilpotent element of $\g^k$.

Let $L_k(z)$ be the operator $L(z)$ defined for the Lie subalgebra
$\g^k$ and $f_k$.  Some caution is required here:  If $D_k$ is the
shift matrix used to calculate $L_k(z)$ from \eqref{EQ:D}, then
$D_k \neq \Pi_{[-k,k]} D \Psi_{[-k,k]}$.

\begin{Theorem} \label{T:Lk}
  Suppose that $\g$ is a Lie algebra of
  classical type and that $f \in \g$ is a nilpotent element with Jordan type with all parts having the same parity.
  Let $k \in \tfrac{1}{2} \Z_{\geq 0}$.
  Considering $V$ to a module of the $\mathfrak{sl}_2$-triple $(f,2x,e)$
 let $U$ be the subspace of $V$ spanned by the $\mathfrak{sl}_2$- submodules of $V$ of highest weight $k$.
 Then
     the coefficients of
     $\Pi_U L_k(z) \Psi_U \bar 1$ lie in $U(\g,f)$.
 \end{Theorem}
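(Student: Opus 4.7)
The plan is to deduce Theorem \ref{T:Lk} from the \cite{SKV} result applied to $(\g^k, f_k)$, by first realizing $\g^k$ as a graded Lie subalgebra of $\g$ and then upgrading the resulting $\g^k_{\geq 1/2}$-invariance to full $\g_{\geq 1/2}$-invariance modulo $J$.

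Because $\g$ is of classical type and all parts of the Jordan type of $f$ share the same parity, the form $\lan\cdot\mid\cdot\ran$ pairs $V[i]$ nondegenerately with $V[-i]$, so $V = V[-k,k]\oplus V[<-k]\oplus V[>k]$ is orthogonal and the restriction of $\lan\cdot\mid\cdot\ran$ to $V[-k,k]$ is nondegenerate. Extending operators by zero off $V[-k,k]$ gives a graded Lie-algebra embedding $\iota:\g^k\hookrightarrow\g$, hence $U(\g^k)\hookrightarrow U(\g)$ carrying $J_k$ into $J$ and matching $\lan f_k\mid\cdot\ran$ with $\lan f\mid\iota(\cdot)\ran$. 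Applying the main result of \cite{SKV} to $(\g^k,f_k)$ on $V[-k,k]$ shows the coefficients of $L_k(z)\bar 1^{(k)}$ lie in $U(\g^k,f_k)$; in particular those of $\Pi_U L_k(z)\Psi_U\bar 1^{(k)}$ do, and via $\iota$ they become elements of $U(\g)/J$ that commute mod $J$ with every element of $\iota(\g^k_{\geq 1/2})$.

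To finish, we must verify that these coefficients also commute mod $J$ with every other element of $\g_{\geq 1/2}$. Decompose $\g = \iota(\g^k)\oplus\g^{>k}\oplus\g^{\mathrm{off}}$, where $\g^{>k}$ is the analogous classical subalgebra acting on $V[<-k]\oplus V[>k]$ (and zero on $V[-k,k]$), and $\g^{\mathrm{off}}$ is the off-diagonal complement, parametrized via the form by $\Hom\bigl(V[<-k]\oplus V[>k],\,V[-k,k]\bigr)$. Let $c$ be a coefficient of $\Pi_U L_k(z)\Psi_U\bar 1$. For $n\in\g^{>k}_{\geq 1/2}$, the supports of $n$ and $\iota(\g^k)$ in $\End(V)$ are disjoint, so $[n,\iota(Y)]=0$ for every $Y\in\g^k$ and hence $[n,c]=0$ in $U(\g)$. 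For $n\in\g_{\geq 1}$, $n\equiv\lan f\mid n\ran\pmod J$ is scalar, so $[n,c]\in J$ automatically. This leaves the case $n\in\g^{\mathrm{off}}_{1/2}$, which is the main obstacle.

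For this final case, a weight analysis shows that each $n\in\g^{\mathrm{off}}_{1/2}$ is determined by a single linear map $n_B:V[-k-1/2]\to V[-k]$ together with its form-adjoint $n_C:V[k]\to V[k+1/2]$, and acts as zero on every other weight space of $V$. The bracket $[n,\iota(Y)]$ lies in $\g^{\mathrm{off}}$ for $Y\in\g^k$ and has an explicit two-piece description (via $n_B$ on the $V[<-k]$-side and via $n_C$ on the $V[>k]$-side). We expand $c$ using the defining formula for $L_k(z)$, including the inverse factor $(\Pi_{<k}Y_k\Psi_{>-k})^{-1}$, apply $\ad n$ term-by-term via Leibniz, and check that the resulting expression lies in $J$. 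The key input is the defining property of $U$: since $U$ is the isotypic component of highest-weight-$k$ $\mathfrak{sl}_2$-submodules of $V$ under $(f,2x,e)$, one has $U[k]=\ker(e|_{V[k]})$ and $U[-k]=\ker(f|_{V[-k]})$, and these kernel descriptions control how $n_B$ and $n_C$ interact with the outer projections $\Psi_U$ and $\Pi_U$. We expect the cancellations to mirror those in the invariance proof of \cite{SKV}, but refined to the narrower projection $\Pi_U$ in place of the full projection onto $V_k[k]$.
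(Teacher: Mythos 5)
There is a genuine gap, and it sits exactly where the real work of the theorem lies. You split $\g_{\geq 1/2}$ into $\iota(\g^k_{\geq 1/2})$, $\g^{>k}_{\geq 1/2}$, the off-diagonal part, and then dispose of all $n\in\g_{\geq 1}$ by saying that $n\equiv\langle f\mid n\rangle \pmod J$ is scalar, ``so $[n,c]\in J$ automatically.'' This is false: $J$ is only a \emph{left} ideal, so $[n,c]\bar 1 = nc\bar 1 - c\,n\bar 1 = nc\bar 1 - \langle f\mid n\rangle c\bar 1$, and the vanishing of this expression is precisely the invariance condition to be proved, not a formality. (If it were automatic, then $U(\g,f)$ would equal $(U(\g)/J)^{\g_{1/2}}$, which in the even case would be all of $U(\g)/J$.) Moreover, under the hypothesis that all parts of the Jordan type of $f$ have the same parity the Dynkin grading is even, so $\g_{1/2}=0$ and the case you single out as ``the main obstacle,'' $n\in\g^{\mathrm{off}}_{1/2}$, is vacuous. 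The actual obstacle is the off-diagonal \emph{weight-one} elements straddling the boundary of $V[-k,k]$ --- in the paper's coordinates the $f_{a,b}$ with $\col(a)=e_k$, $\col(b)=e_k+1$, which map $V[-k-1]\to V[-k]$ and $V[k]\to V[k+1]$ --- and these are exactly the elements your $\g_{\geq 1}$ step sweeps away. Even in the case you do engage with, the argument is only a promissory note (``we expect the cancellations to mirror those in \cite{SKV}''), so no complete verification is given anywhere for the boundary elements.

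For comparison, the paper's proof agrees with your easy cases (elements supported entirely inside $V[-k,k]$ are handled by \cite[Theorem 4.9]{SKV} applied to $(\g^k,f_k)$; elements supported entirely outside commute with everything in $\g^k$), but the boundary case is treated by a genuine computation: one expands $L_k(z)_{c,d}$ into the explicit monomials of Theorem \ref{T:Lzexplicit}, computes $[f_{a,b},f_{x,y}]$ via Lemma \ref{L:rel2}, uses Lemma \ref{L:aanotappear} to pin down which factors can be hit, and then shows via Lemma \ref{L:pr2} that every resulting term, once commuted to the right of the monomial, annihilates $\bar 1$ because $\row(x)=\row(d)\neq\row(b)$. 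Your proposal would need to supply an argument of this kind (or a genuinely worked-out Leibniz computation on the quasideterminant) for the weight-one boundary elements before it could be considered a proof.
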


Let $L_{k;R}(z)$ be the operator $L_R(z)$ from \eqref{EQ:LR}
defined for $\g^k$ and $f_k$.
 \begin{Theorem} \label{T:LRk}
  Let $\g, f, k$, and $U$ be as in the previous theorem.
  Let $m \in \frac{1}{2} \Z$ be the largest number such that that $m < k$ and $m$ is the weight of a highest weight vector in $V$,
  Let $b \in \frac{1}{2}\Z$ be such that
  $b > k-m$.
  Let $W$ be the subspace spanned by lowest weight vectors of weight $-m$ in $V$.
  Then the coefficients of $z^{-b}$ in
  $\Pi_W L_{k;R}(z)\Psi_U \bar 1$
  are  in $U(\g,f)$.
 \end{Theorem}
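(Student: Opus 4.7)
The plan is to adapt the proof of Theorem~\ref{T:main} to the truncated setting $(\g^k, f_k)$, with Theorem~\ref{T:Lk} playing the role that the membership result of \cite{SKV} for $L(z)$ played in Theorem~\ref{T:main}.

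First, from the defining formula of $L_k(z)$ one has the identity
\[
L_k(z) \;=\; \Pi_k Y_k \Psi_{-k} \;-\; \Pi_k Y_k \Psi_{>-k}\, L_{k;R}(z),
\]
where $Y_k, \Pi_k, \Psi_{-k}, \Psi_{>-k}$ are the analogs for $\g^k$ and $f_k$ acting on $V$ restricted to weights in $[-k,k]$. Composing with $\Pi_U$ on the left and $\Psi_U$ on the right and applying to $\bar 1$, Theorem~\ref{T:Lk} together with an explicit computation of $\Pi_U \Pi_k Y_k \Psi_{U[-k]} \bar 1$ (whose coefficients are trivially in $U(\g, f)$) shows that all coefficients of
\[
\Pi_{U[k]} Y_k \Psi_{>-k}\, L_{k;R}(z)\, \Psi_U \bar 1
\]
lie in $U(\g, f)$.

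Next, I would expand $\Pi_{U[k]} Y_k \Psi_{>-k}$ as a power series in $z^{-1/2}$ and decompose by output weight, writing
\[
\Pi_{U[k]} Y_k \Psi_{>-k}\, L_{k;R}(z)\Psi_U \;=\; \sum_{j \in (-k, k]} \bigl(\Pi_{U[k]} Y_k \Psi_j\bigr)\,\bigl(\Pi_j L_{k;R}(z) \Psi_U\bigr).
\]
The block $\Pi_{U[k]} Y_k \Psi_j$ has leading $z$-power $z^{j-k-1}$ for $j<k$ (arising from a $u_i \otimes U^i$ term with $u_i \in \g_{j-k}$), while for $j=k$ its leading coefficient at $z^0$ is the natural projection $V[k] \to U[k]$.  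A triangular solution—working down from $j$ near $k$ to $j=-m$ and inductively in negative powers of $z$—then expresses the coefficient of $z^{-b}$ in $\Pi_W L_{k;R}(z) \Psi_U \bar 1$ as a $U(\g)/J$-linear combination of coefficients already placed in $U(\g,f)$.  The bound $b > k-m$ is precisely the threshold at which the block sending weight $-m$ to weight $k$, whose entries have $z$-order at most $z^{-(k-m)-1}$, is the only part of the sum not yet absorbed into the inductive step.

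The main obstacle is carrying out this triangular inversion cleanly.  Under the classical-type and single-parity Jordan-type hypotheses, one must verify that the relevant structural identifications hold—in particular that $W$ coincides with the $\mathfrak{sl}_2^k$-lowest weight vectors of weight $-m$ inside $V$ restricted to $[-k, k]$, so that the descent lands in the correct subspace—and that the inverted-block coefficients produced at each stage are genuinely in $U(\g,f)$ rather than merely in $U(\g)/J$.  These identifications are the same ones underlying Theorem~\ref{T:Lk}, so once they are in place the remaining steps are formal analogs of those in the proof of Theorem~\ref{T:main}.
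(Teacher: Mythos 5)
There is a genuine gap, and it occurs at the heart of your argument. Your plan is to start from the identity $L_k(z) = \Pi_k Y_k \Psi_{-k} - \Pi_k Y_k \Psi_{>-k}\,L_{k;R}(z)$, use Theorem \ref{T:Lk} to place the coefficients of $\Pi_{U}\,Y_k \Psi_{>-k}\, L_{k;R}(z)\Psi_U \bar 1$ in $U(\g,f)$, and then ``triangularly invert'' the block $\Pi_{U}\, Y_k \Psi_{>-k}$ to solve for the coefficients of $\Pi_W L_{k;R}(z)\Psi_U\bar 1$. The inversion expresses those coefficients as a $U(\g)/J$-linear combination of elements of $U(\g,f)$ — but $U(\g,f)=(U(\g)/J)^{\g_{\geq 1/2}}$ is an invariant subspace, not an ideal or a $U(\g)/J$-submodule, so such a combination need not lie in $U(\g,f)$. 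The entries of $Y_k$ are themselves elements of $U(\g)$ (the $u_i\otimes U^i$ terms), not scalars, and multiplying an invariant element by a non-invariant one destroys invariance. Indeed the same reasoning applied with $k=\xi/2$ would ``prove'' that essentially all coefficients of $L_R(z)\bar 1$ lie in $U(\g,f)$, which is false: Theorem \ref{T:main} only claims this for the specific block $\Pi_W$ and the specific powers $z^{-b}$ with $b>\xi/2-l$. Any correct proof must at some point verify the defining condition $[m,\cdot]\,\bar 1=0$ for $m\in\g_{\geq 1/2}$, which your proposal never does.

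The second, related omission is that you never confront the actual new difficulty of this theorem relative to Theorem \ref{T:main} applied to $(\g^k,f_k)$. Invariance under $\g^k_{\geq 1}$ does follow from Theorem \ref{T:main} for the truncated pair (this is how the paper uses it), and elements $f_{a,b}\in\g_1$ with $\col(a)>e_k$ commute with everything in $\g^k$. The crux is the boundary-crossing elements with $\col(a)=e_k$ and $\col(b)=e_k+1$, which lie in $\g_{\geq 1}$ but not in $\g^k$, and for which neither \cite{SKV} nor Theorem \ref{T:main} says anything. The paper disposes of these by an explicit commutator computation: using the formula of Theorem \ref{T:LzRexplicit}, Lemma \ref{L:rel2}, Lemma \ref{L:aanotappear2}, and Lemma \ref{L:pr2}, one shows each term produced by the commutator can be pushed to the right of the monomial, where it kills $\bar 1$. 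Your proposal contains no substitute for this step, and the single-parity and classical-type hypotheses (which you invoke only vaguely) are used precisely there, via $\g_{1/2}=0$ and the structure of the $f_{a,b}$ basis.
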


\begin{Theorem} \label{T:generate}
   Then  the union of the elements of $U(\g,f)$ obtained from
   the previous two theorems for the values of $k$ which are highest weights for $V$ for the $\mathfrak{sl}_2$-triple $(f,2x,e)$ generate $U(\g,f)$.
\end{Theorem}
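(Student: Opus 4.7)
The standard strategy for proving generation of a finite $W$-algebra is to pass to the associated graded with respect to the Kazhdan filtration, under which $\operatorname{gr} U(\g,f) \iso S(\g^f)$ as graded commutative algebras by standard PBW-type results. Since $S(\g^f)$ is generated by $\g^f$ in degree $1$, it suffices to show that the leading Kazhdan symbols of the elements supplied by Theorems \ref{T:Lk} and \ref{T:LRk}, as $k$ ranges over the highest $\mathfrak{sl}_2$-weights appearing in $V$, together span $\g^f$ inside $S(\g^f)_1$. This reduces the generation question to an explicit calculation in $\End(V)$.

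The plan is first to extract the principal symbols. Assigning $z^{-1}$ Kazhdan degree $1$, the geometric expansion of the inverse appearing in $L_k(z)$ and $L_{k;R}(z)$ shows that, modulo lower Kazhdan filtration, the coefficient of $z^{-b}$ in each matrix entry is a linear combination of the dual basis vectors $u^i$, where the allowed $u^i$ have prescribed $\ad x$-weight and lie in the matrix block determined by the source and target subspaces used. Concretely, elements produced by Theorem \ref{T:Lk} at parameter $k$ should yield symbols in the ``diagonal'' part of $\g^f$ indexed by pairs of $\mathfrak{sl}_2$-submodules of common highest weight $k$, while elements from Theorem \ref{T:LRk} with parameters $(k,m,b)$ should yield symbols in the ``off-diagonal'' block connecting a submodule of highest weight $k$ to one of highest weight $m<k$, with $b$ controlling the $\ad x$-weight of the symbol inside that block.

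The main obstacle is the matching step: as the parameters range, the resulting symbols must cover a basis of $\g^f$, with special attention to types B, C, D, where $\g^f$ is cut out of a $\mathfrak{gl}$-type subspace by the invariance condition for $\langle \cdot \mid \cdot \rangle$. Here the hypothesis that all parts of the Jordan type of $f$ share a common parity is crucial: it ensures that the form pairs $\mathfrak{sl}_2$-submodules of $V$ of equal highest weight in a way compatible with the $\ad x$-grading, so that the off-diagonal blocks of $\g^f$ occur in dually related couples, each couple being covered by a single instance of Theorem \ref{T:LRk}. Once this bookkeeping is carried out, a dimension count should match the number of independent symbols with $\dim \g^f$, so it remains to verify linear independence of the leading symbols; this should follow from the explicit quasideterminant formulas by induction on $b$ together with the highest-weight hypothesis that underlies Theorems \ref{T:Lk} and \ref{T:LRk}.
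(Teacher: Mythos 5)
There is a genuine gap in your reduction. You claim that it suffices for the leading symbols of the elements from Theorems \ref{T:Lk} and \ref{T:LRk} to \emph{span} $\g^f$ inside the degree-one part of the associated graded algebra, and you then propose a dimension count to verify this. That count will not close: the elements supplied by the two theorems connect only $\mathfrak{sl}_2$-isotypic blocks of \emph{equal} highest weight (Theorem \ref{T:Lk}) or of \emph{adjacent} highest weights $k$ and $m$, where $m$ is the largest highest weight below $k$ (Theorem \ref{T:LRk}). In the paper's notation their symbols are exactly the Premet--Topley elements $\zeta_i^{j,t}$ with $\lambda_i=\lambda_j$ or with $\lambda_i$ the largest part strictly smaller than $\lambda_j$ (Lemmas \ref{L:zeta2} and \ref{L:zeta1}). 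Whenever the Jordan type has three or more distinct part sizes, the basis elements $\zeta_i^{j,t}$ with some $\lambda_k$ strictly between $\lambda_i$ and $\lambda_j$ are \emph{not} among these symbols, so the symbols do not span $\g^f$ and your matching step fails.

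The paper avoids this by working with the filtration for which $\operatorname{gr}(U(\g,f)) \cong U(\g^f)$ as a noncommutative algebra (\cite[Theorem 3.8]{BGK}), not the commutative $S(\g^f)$. With that identification it is enough for the symbols to generate $\g^f$ as a \emph{Lie algebra}, and the missing elements are recovered from the bracket identity $[\zeta_i^{k,s},\zeta_k^{j,t}] = -\zeta_i^{j,\,s+t-(\lambda_k-1)}$ of \cite[Lemma 5]{PrT}, applied with $t=\lambda_k-1$ to chain through intermediate part sizes. If you insist on the commutative (Kazhdan-type) picture, you would instead have to argue that the symbols generate $S(\g^f)$ as a \emph{Poisson} algebra, which again requires precisely this bracket computation; a purely linear spanning argument cannot work. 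Your extraction of the principal symbols from the geometric expansion of the quasideterminant (only the length-one monomials survive in top degree) does agree with Lemma \ref{L:gr}, but the identification of those symbols with a known basis of $\g^f$ and the subsequent commutator step are the substance of the proof and are absent from your outline.
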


The ultimate goal is to obtain presentations for all finite $W$-algebras.  In the future work \cite{B2} we will use these generators to obtain presentations of two-row finite $W$-algebras associated to classical Lie algebras, where by two-row we mean that the Jordan type of $f$ has exactly two parts.

\subsection*{Acknowledgements}
We would like to thank Simon Goodwin for his hospitality and many enlightening conversations.

\section{The Lax type operator}

The Lax type operator studied in \cite{SKV} is defined as the quasideterminant
\begin{align*}
\hat L(z) = & \left(  \Pi_{-\tfrac{\xi}{2}} \left( z \otimes \mathbbm{1}_V + F + \sum_{u_i \in \g_{\leq 1/2}}  u_i \otimes U^i  +  D\right)^{-1} \Psi_{\tfrac{\xi}{2}} \right)^{-1}  \\
 & \qquad \qquad \in U(\g)((z^{-1})) \otimes \operatorname{Hom}(V[-\xi/2],V[\xi/2]).
\end{align*}
Now \cite[Theorem 4.9]{SKV} says that 
the coefficients of $\hat L(z) \bar 1$
are in $U(\g,f)$,
where $\bar 1$ is the image of 1 in $U(\g)/J$. 
However these coefficients only generate $U(\g,f)$ in the case
that all of the highest weight vectors in $V$ for the $\mathfrak{sl}_2$-triple $(e,2x,f)$ have the same weight.
For classical Lie algebras, this translates to $f$ being
is a rectangular nilpotent element, that is all the parts of the Jordan form of $f$ have the same size.

By \cite[(5.25)]{SKV} $\hat L(z)$ can be expressed (using a Dirac reduction) as
\begin{align*}
z^{-1-d} & \hat L(z)  = \\
  & \Pi_{\tfrac{\xi}{2}} Y \Psi_{-\tfrac{\xi}{2}}  - \Pi_{\tfrac{\xi}{2}} Y \Psi_{> -\tfrac{\xi}{2}} \left(\Pi_{< \tfrac{\xi}{2}} Y \Psi_{> -\tfrac{\xi}{2}}\right)^{-1} \Pi_{< \tfrac{\xi}{2}} Y \Psi_{-\tfrac{\xi}{2}}.
\end{align*}
Thus $z^{-d-1} \hat L(z) = L(z)$.

Some results from \cite{SKV} make calculating commutators with $L_r(z)$ tractable.
Let 
\begin{equation} \label{EQ:Z}
Z = \mathbbm{1}_V + \sum_{u_i \in \g} z^{\delta(u_i)-1} u_i \otimes U^i.
\end{equation}
It turns out key calculations involving $Y$ can be done using $Z$, for which is is easier to calculate commutators.
More precisely,
 \cite[(5.26)]{SKV}
 says that
\begin{equation} \label{EQ:ZY}
 (\Pi_{<\tfrac{\xi}{2}} Z \Psi_{>-\tfrac{\xi}{2}})^{-1} \Pi_{<\tfrac{\xi}{2}} Z \Psi_{-\tfrac{\xi}{2}} \bar 1=
 (\Pi_{<\tfrac{\xi}{2}} Y \Psi_{>-\tfrac{\xi}{2}})^{-1} \Pi_{<\tfrac{\xi}{2}} Y \Psi_{-\tfrac{\xi}{2}} \bar 1.
\end{equation}
 Thus it is enough to calculate certain commutators
 in the above expression involving $Z$ instead of $Y$.

The following lemma is essentially \cite[(5.49)]{SKV}:
\begin{Lemma} \label{L:SKV1}
Let $m \in \g_k$ for some $k$.  Then
\[
 [m, Z] = z^{-\delta(m)} [Z, \phi(m)].
\]
\end{Lemma}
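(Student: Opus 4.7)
The plan is to expand both sides of the asserted identity directly. The summand $\mathbbm{1}_V$ of $Z$ commutes with $m$ in the first tensor factor and with $\phi(m)$ in the second, so it contributes nothing to either side. The remaining summands yield
\[
[m, Z] = \sum_{u_i \in \g} z^{\delta(u_i)-1} [m, u_i] \otimes U^i,
\]
and, since $\phi$ is a Lie algebra homomorphism,
\[
z^{-\delta(m)} [Z, \phi(m)] = \sum_{u_j \in \g} z^{\delta(u_j)-\delta(m)-1} u_j \otimes \phi([u^j, m]).
\]

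To match these, I would expand $[m, u_i]$ in the basis $\{u_j\}$. Using the invariance of the bilinear form, which gives $\langle [m, u_i] \mid u^j \rangle = \langle u_i \mid [u^j, m] \rangle$, we obtain
\[
[m, u_i] = \sum_j \langle [m, u_i] \mid u^j \rangle u_j = \sum_j \langle u_i \mid [u^j, m] \rangle u_j.
\]
Substituting into the expression for $[m,Z]$ and interchanging summation yields
\[
[m, Z] = \sum_j u_j \otimes \sum_i z^{\delta(u_i)-1} \langle u_i \mid [u^j, m] \rangle U^i.
\]

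The key observation is that the bilinear form on $\g$ pairs $\g_p$ with $\g_{-p}$, since it is non-degenerate and $\operatorname{ad} x$-invariant. Therefore $\langle u_i \mid [u^j, m] \rangle$ vanishes unless $\delta(u_i) = -\delta([u^j,m]) = \delta(u_j) - \delta(m)$. On the nonzero summands the exponent $\delta(u_i) - 1$ equals $\delta(u_j) - \delta(m) - 1$ and can be pulled outside the inner sum. What remains inside is precisely $\phi([u^j, m]) = \sum_i \langle u_i \mid [u^j, m] \rangle U^i$, using the standard expansion $v = \sum_i \langle v \mid u_i \rangle u^i$. Comparing with the expansion of $z^{-\delta(m)}[Z, \phi(m)]$ gives the claim. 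The only subtlety is the bookkeeping of the $z$-exponents, which is handled entirely by the graded nature of the form, and everything else is a direct unwinding of the definitions of $Z$, the bilinear form, and $\phi$.
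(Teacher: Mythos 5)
Your proof is correct. The paper itself gives no argument for this lemma: it simply states that it is ``essentially \cite[(5.49)]{SKV}'' and defers to that reference. Your direct verification fills this in from first principles, and all the steps check out: the identity summand of $Z$ drops out of both sides; the expansion $[m,u_i]=\sum_j\langle u_i\mid [u^j,m]\rangle u_j$ follows from invariance of the form and the duality $\langle u_i\mid u^j\rangle=\delta_{ij}$; and the crucial exponent bookkeeping rests on the fact that the $\ad x$-invariance of the form forces $\langle \g_p\mid\g_q\rangle=0$ unless $p+q=0$, so that $\delta(u^j)=-\delta(u_j)$ and the pairing $\langle u_i\mid [u^j,m]\rangle$ is nonzero only when $\delta(u_i)=\delta(u_j)-\delta(m)$, which is exactly what converts the factor $z^{\delta(u_i)-1}$ into $z^{\delta(u_j)-\delta(m)-1}$. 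The final identification $\sum_i\langle v\mid u_i\rangle U^i=\phi(v)$ with $v=[u^j,m]$, together with $[U^j,\phi(m)]=\phi([u^j,m])$, completes the match with $z^{-\delta(m)}[Z,\phi(m)]$. The only thing I would make explicit is the observation $\delta(u^j)=-\delta(u_j)$, which you use implicitly when writing $-\delta([u^j,m])=\delta(u_j)-\delta(m)$; it is an immediate consequence of the graded pairing you already invoked, so this is a presentational point rather than a gap.
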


Now we use these results
to prove the following lemma.
\begin{Lemma} \label{L:help}
Let $m \in \g_{\geq 1/2}$.
Then
\begin{align*}
    &\left[m,
\left(\Pi_{<\tfrac{\xi}{2}} Z \Psi_{>-\tfrac{\xi}{2}}\right)^{-1} \Pi_{<\tfrac{\xi}{2}} Z \Psi_{-\tfrac{\xi}{2}}
    \right]
    \bar 1
    =
     z^{-\delta(m)} \left( \Pi_{>-\tfrac{\xi}{2}} \phi(m) \Psi_{-\tfrac{\xi}{2}}\right. \\
    &\qquad \qquad - \left.
       \Pi_{>-\tfrac{\xi}{2}} \phi(m) \Psi_{>-\tfrac{\xi}{2}}
\left(\Pi_{<\tfrac{\xi}{2}} Z \Psi_{>-\tfrac{\xi}{2}}\right)^{-1} \Pi_{<\tfrac{\xi}{2}} Z \Psi_{-\tfrac{\xi}{2}}
       \right) \bar 1.
       \end{align*}
\end{Lemma}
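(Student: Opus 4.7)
The plan is to expand the commutator $[m, A^{-1}B]\,\bar 1$ by the Leibniz rule, replace $[m,\cdot]$ on the $U(\g)$ side with a commutator in $\End(V)$ via Lemma \ref{L:SKV1}, and then exploit two weight-theoretic facts: that $\phi(m)$ shifts weights up by $\delta(m) \geq 1/2$ (so it annihilates $V[\tfrac{\xi}{2}]$ and has image inside $V[>-\tfrac{\xi}{2}]$), and that $A\cdot (A^{-1}B)=B$ where I abbreviate $A := \Pi_{<\tfrac{\xi}{2}} Z \Psi_{>-\tfrac{\xi}{2}}$ and $B := \Pi_{<\tfrac{\xi}{2}} Z \Psi_{-\tfrac{\xi}{2}}$.

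First I would write $[m, A^{-1}B] = -A^{-1}[m,A]A^{-1}B + A^{-1}[m,B]$. Since the projections and inclusions $\Pi$, $\Psi$ involve no $U(\g)$ part, $[m,A] = \Pi_{<\tfrac{\xi}{2}} [m,Z] \Psi_{>-\tfrac{\xi}{2}}$ and similarly for $[m,B]$. Invoking Lemma \ref{L:SKV1} gives $[m,Z] = z^{-\delta(m)}(Z\phi(m) - \phi(m) Z)$, so each of $[m,A]$ and $[m,B]$ splits into a ``$Z\phi(m)$'' piece and a ``$\phi(m) Z$'' piece, producing four terms to manage.

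For the $Z\phi(m)$ pieces, since $\phi(m)$ maps into $V[>-\tfrac{\xi}{2}]$, we have $\phi(m)\Psi_{\ast} = \Psi_{>-\tfrac{\xi}{2}}\, \Pi_{>-\tfrac{\xi}{2}} \phi(m)\Psi_{\ast}$ for $\ast \in \{-\tfrac{\xi}{2}, >-\tfrac{\xi}{2}\}$. Substituting this factors a copy of $A$ out of $\Pi_{<\tfrac{\xi}{2}} Z \phi(m)\Psi_\ast$, which the outer $A^{-1}$ then cancels. What remains is precisely the two terms on the right-hand side of the claimed identity.

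For the $\phi(m) Z$ pieces, the two surviving contributions assemble into
\[
z^{-\delta(m)}\, A^{-1} \Pi_{<\tfrac{\xi}{2}} \phi(m)\bigl( Z \Psi_{>-\tfrac{\xi}{2}}\, A^{-1} B \;-\; Z \Psi_{-\tfrac{\xi}{2}} \bigr).
\]
Applying $\Pi_{<\tfrac{\xi}{2}}$ inside the parentheses yields $A \cdot A^{-1}B - B = 0$, so the bracketed operator takes values in $V[\tfrac{\xi}{2}]$; writing it as $\Psi_{\tfrac{\xi}{2}}\, \Pi_{\tfrac{\xi}{2}}(\cdot)$, the outer $\phi(m)$ kills it because $\phi(m)\Psi_{\tfrac{\xi}{2}}=0$ (the image would have weight $>\tfrac{\xi}{2}$). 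These cross terms therefore vanish identically, and applying $\bar 1$ produces the lemma. The main obstacle, and the real content of the argument, is recognizing this final cancellation: one must spot that the awkward ``$\phi(m) Z$'' contributions recombine via the defining identity $A(A^{-1}B)=B$ into an operator landing in the top-weight space $V[\tfrac{\xi}{2}]$, which is exactly the kernel of $\phi(m)$.
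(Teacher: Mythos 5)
Your argument is correct and follows essentially the same route as the paper: the same Leibniz expansion, the same appeal to Lemma \ref{L:SKV1}, and the same two weight-theoretic observations (that $\phi(m)\Psi_{>-\tfrac{\xi}{2}}$ factors through $\Psi_{>-\tfrac{\xi}{2}}$ so a copy of $A$ cancels, and that the $\phi(m)Z$ cross terms land in $V[\tfrac{\xi}{2}]$, which $\phi(m)$ annihilates). The only cosmetic difference is that the paper cancels the two $\phi(m)Z$ terms against each other individually, whereas you combine them first and then observe the combination lies in the top weight space.
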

\begin{proof}
   To use space more efficiently, let
   $\pi_{\xi/2}(A) = \Pi_{<\tfrac{\xi}{2}} A \Psi_{>-\tfrac{\xi}{2}}$ and
   $\pi_{\xi/2}^R(A) = \Pi_{<\tfrac{\xi}{2}} A \Psi_{-\tfrac{\xi}{2}}$.
   We calculate
   \begin{align*}
   z ^{\delta(m)} &
   \left[m, \pi_{\xi/2}(Z)^{-1} \pi_{\xi/2}^R(Z) \right]  \\
   & =
   z ^{\delta(m)}
   \left[m, \pi_{\xi/2}(Z)^{-1}\right] \pi_{\xi/2}^R(Z)
    +
    \pi_{\xi/2}(Z)^{-1}
    \left[m, \pi_{\xi/2}^R{Z}\right]   \\
   & = 
   z ^{\delta(m)}  (
   -    
    \pi_{\xi/2}(Z)^{-1}
   \left[m, \pi_{\xi/2}(Z) \right]
    \pi_{\xi/2}(Z)^{-1}
   \pi_{\xi/2}^R(z)  \\
    & \qquad +
    \pi_{\xi/2}(Z)^{-1}
    \left[m, \pi_{\xi/2}^R(Z) \right] )\\
   & =
   z ^{\delta(m)}  (
   -    
    \pi_{\xi/2}(Z)^{-1}
   \pi_{\xi/2}(\left[m,Z\right])
    \pi_{\xi/2}(Z)^{-1}
    \pi_{\xi/2}^R(Z)\\
    & \qquad +
    \pi_{\xi/2}(Z)^{-1}
     \pi_{\xi/2}^R([m,Z])).
   \end{align*}
   By Lemma \ref{L:SKV1} this equals
   \begin{align} \label{EQ:line0}
   & -    
    \pi_{\xi/2}(Z)^{-1}
   \pi_{\xi/2}([Z,\phi(m)])
     \pi_{\xi/2}(Z)^{-1}
    \pi_{\xi/2}^R(Z)   \\
    & \notag  \qquad  +
    \pi_{\xi/2}(Z)^{-1}
     \pi_{\xi/2}^R([Z,\phi(m)])   \\
    & = 
    \label{EQ:line1}
    \pi_{\xi/2}(Z)^{-1}
   \pi_{\xi/2}(\phi(m) Z)
    \pi_{\xi/2}(Z)^{-1}
     \pi_{\xi/2}^R(Z)  \\
  \label{EQ:line2} 
     &\qquad - \pi_{\xi/2}(Z)^{-1}
   \pi_{\xi/2}(Z \phi(m))
    \pi_{\xi/2}(Z)^{-1}
    \pi_{\xi/2}^R(Z) \\
   \label{EQ:line3}
     & \qquad +
    \pi_{\xi/2}(Z)^{-1}
     \pi_{\xi/2}^R(Z \phi(m))  \\
     \label{EQ:line4}
    &\qquad  - \pi_{\xi/2}(Z)^{-1}
     \pi_{\xi/2}^R(\phi(m) Z).
   \end{align}
   For any $v \in V[<\tfrac{\xi}{2}]$ note that
   $
    Z \Psi_{>-\tfrac{\xi}{2}}
    \left(\Pi_{<\tfrac{\xi}{2}} Z \Psi_{>-\tfrac{\xi}{2}}\right)^{-1}  \cdot v = v + \sum p_i(z) \otimes v_i,
    $
    where each $v_i \in V[\xi/2]$.
    Therefore, since $m \in \g_{\geq 1/2}$, 
    $ \Pi_{<\tfrac{\xi}{2}} \phi(m)
    Z \Psi_{>-\tfrac{\xi}{2}}
    \left(\Pi_{<\tfrac{\xi}{2}} Z \Psi_{>-\tfrac{\xi}{2}}\right)^{-1}  \cdot v = \Pi_{<\tfrac{\xi}{2}} \phi(m) \cdot v$.
    Thus line \eqref{EQ:line1} is equal to
    \begin{align*}
    \pi_{\xi/2}(Z) ^{-1}
   \Pi_{<\tfrac{\xi}{2}} \phi(m) \Psi_{<\tfrac{\xi}{2}}
   \pi_{\xi/2}^R(Z)
    =
    \pi_{\xi/2}(Z) ^{-1}
   \pi_{\xi/2}^R(Z)(\phi(m) Z),
   \end{align*}
   which cancels with \eqref{EQ:line4}.

   Next observe that
     $\left(\Pi_{<\tfrac{\xi}{2}} Z \Psi_{>-\tfrac{\xi}{2}}\right)^{-1}
   \Pi_{<\tfrac{\xi}{2}}  Z \cdot v = v$ for all $v \in V[>-\tfrac{\xi}{2}]$, thus \eqref{EQ:line2} is equal to
   \begin{align*}
    &-  
    \Pi_{>-\tfrac{\xi}{2}} \phi(m) \Psi_{>-\tfrac{\xi}{2}}
    \pi_{\xi/2}(Z)^{-1}
   \pi_{\xi/2}^R(Z).
   \end{align*}
  
  Finally our last observation also gives that \eqref{EQ:line3}
  is equal to \\
  $
    \Pi_{>-\tfrac{\xi}{2}} \phi(m) \Psi_{-\tfrac{\xi}{2}}.
     $
     The lemma now follows.
\end{proof}

\section{Explicit formulas for $L(z)$} \label{S:Lz}
\subsection{Pyramids and coordinates}
Let $N = \dim V$.  Let $\lambda = (\lambda_1 \geq \lambda_2 \geq \dots \geq \lambda_n)$ be the Jordan type of $F = \phi(f) \in \gl(V)$.
We will use a variation of the pyramids from \cite{EK} to define coordinates.
A pyramid is a collection of $1 \times 1$ boxes arranges in rows and columns in the plane.
We make a pyramid of boxes from this partition so that the row lengths are the parts of the partition
and the rows are
arranged symmetrically around
the vertical center line.  Fill the boxes of this pyramid with $1, 2, \dots, N$.  Label the the rows of the pyramid $1, \dots n$ such that the length of row $i$ is $\lambda_i$, and
label the columns with $1, \dots, \lambda_1$, starting at the leftmost box.  Some columns will have
half-integer labels if $\lambda$ has odd and even parts.
For example, if $\lambda = (6,3,3,2)$
then
the filled pyramid could be

\begin{center}
\begin{picture}(120,80)
\put(0,0){\line(1,0){120}}
\put(0,20){\line(1,0){120}}
\put(30,40){\line(1,0){60}}
\put(30,60){\line(1,0){60}}
\put(40,80){\line(1,0){40}}
\put(0,0){\line(0,1){20}}
\put(20,0){\line(0,1){20}}
\put(40,0){\line(0,1){20}}
\put(60,0){\line(0,1){20}}
\put(80,0){\line(0,1){20}}
\put(100,0){\line(0,1){20}}
\put(120,0){\line(0,1){20}}
\put(30,20){\line(0,1){40}}
\put(50,20){\line(0,1){40}}
\put(70,20){\line(0,1){40}}
\put(90,20){\line(0,1){40}}
\put(40,60){\line(0,1){20}}
\put(60,60){\line(0,1){20}}
\put(80,60){\line(0,1){20}}
\put(10,10){\makebox(0,0){{1}}}
\put(30,10){\makebox(0,0){{2}}}
\put(50,10){\makebox(0,0){{3}}}
\put(70,10){\makebox(0,0){{4}}}
\put(90,10){\makebox(0,0){{5}}}
\put(110,10){\makebox(0,0){{6}}}
\put(40,30){\makebox(0,0){{7}}}
\put(60,30){\makebox(0,0){{8}}}
\put(80,30){\makebox(0,0){{9}}}
\put(40,50){\makebox(0,0){{10}}}
\put(60,50){\makebox(0,0){{11}}}
\put(80,50){\makebox(0,0){{12}}}
\put(50,70){\makebox(0,0){{13}}}
\put(70,70){\makebox(0,0){{14}}}
\end{picture}
\end{center}

If $a \in \{1, \dots, N \}$, we let $\row(a)$ be the label of the row in which $a$ occurs in the pyramid, and we let
$\col(a)$ denote the column in which $a$ occurs.
For example for the above pyramid $\row(13)=4$ and $\col(13) = 3$,
whereas $\row(12) = 3$ and $\col(12) = 4.5$.

The
representation theory
of
$\mathfrak{sl}_2$
tells us that
there exists a basis $\{e_1, \dots, e_{N}\}$ of $V$ such that
$F e_a = e_b$ where $\row(a) = \row(b)$ and $\col(b) = \col(a)+1$ if $\col(a) < \lambda_{\row(a)}$, and $F e_a = 0$ if $\col(a) = \lambda_{\row(a)}$.
So with respect to this basis,
\begin{equation} \label{EQ:F}
   F = \sum_{\substack{a,b = 1, \dots, N \\ \row(a) = \row(b) \\
   \col(a) \neq \lambda_1 \\
   \col(b) = \col(a)+1}}  e_{b,a}
\end{equation}

We can now use the pyramid the calculate the weight function
$\delta$ from $\S1$:
\begin{align} \label{EQ:delta}
   \delta(e_a) &= (\lambda_1 + 1)/2 -\col(a), \\
   \delta(e_{a,b}) &= \col(b)-\col(a). \notag
\end{align}

Note that if
\[
  \alpha = \Pi_{<\tfrac{\xi}{2}} \mathbbm{1}_V + F \Psi_{>-\tfrac{\xi}{2}} \in \Hom(V[>-\tfrac{\xi}{2}], V[<\tfrac{\xi}{2}]),
\]
then with respect to the above basis,
\[
   \alpha = \sum_
        {\substack{a,b = 1, \dots, N \\
          \row(a) = \row(b) \\
          \col(b) = \col(a)-1 \\
          \col(a) \neq 1
           }}
       e_{a,b}
       + \sum_{\substack{a=1, \dots, N \\
       \col(a) \neq 1 \\
       \col(a) \neq  \lambda_1
       }}
         e_{a,a},
\]
where we have swapped $a,b$ from where they were in \eqref{EQ:F}.
A straightforward calculation shows that
\begin{equation} \label{EQ:alphainv}
 \alpha^{-1} = \sum_{
 \substack{
    a,b = 1, \dots, N \\
    \lambda_{\row(a)} = \lambda_1 \\
    \row(a) = \row(b) \\
    \col(a) < \col(b)
   }}
   -(-1)^{\col(a) + \col(b)} e_{a,b}
   +
 \sum_{
 \substack{
    a,b = 1, \dots, N \\
    \lambda_{\row(a)} < \lambda_1 \\
    \row(a) = \row(b) \\
    \col(a) \geq \col(b)
   }}
   (-1)^{\col(a) + \col(b)} e_{a,b}.
\end{equation}

We now use these calculations to find a formula
for
\begin{align} \label{EQ:inv}
&\left(\Pi_{<\tfrac{\xi}{2}}
Y
\Psi_{>-\tfrac{\xi}{2}} \right)^{-1}
 \\
\notag
& \qquad =
\left(
\Pi_{<\tfrac{\xi}{2}} \mathbbm{1} + F +
\sum_{u_i \in \g_{\leq 1/2}} z^{\delta(u_i)-1} u_i \otimes U^i + z^{-1} D
\Psi_{>-\tfrac{\xi}{2}} \right)^{-1}.
\end{align}

Let
\begin{equation} \label{EQ:Yz1}
\bar Y = \Pi_{< \tfrac{\xi}{2}}  \sum_{u_i \in \g_{\leq 1/2}} z^{\delta(u_i)-1} u_i \otimes U^i + z^{-1} D
\Psi_{> -\tfrac{\xi}{2}}.
\end{equation}
In general we shall use $\bar Y_{a,b}$ to refer to the
coefficient of $e_{a,b}$ in
\[
 \sum_{u_i \in \g_{\leq 1/2}} z^{\delta(u_i)-1} u_i \otimes U^i + z^{-1} D,
\]
though the matrix $\bar Y$ only involves a subset of the $\bar Y_{a,b}$'s.
It will also be useful to note that
\begin{equation} \label{EQ:Yab}
  \bar Y_{a,b} = z^{\col(a) - \col(b) - 1} x_{a,b}
\end{equation}
for some $x_{a,b} \in U(\g)$,
which means that $\bar Y$ is a polynomial in $z^{-1/2}$ with no constant term.

Now we have that \eqref{EQ:inv}
equals
\[
(\alpha + \bar Y)^{-1}
=(1+ \alpha^{-1} \bar Y)^{-1} \alpha^{-1}
= \left(\sum_{m=0}^\infty (-\alpha^{-1} \bar Y)^m \right) \alpha^{-1}.
\]

We shall first find a formula for $-\alpha^{-1} \bar Y$.
From \eqref{EQ:Yz1} since $u^i \in \g_{\geq -1/2}$ we have that
\[
\bar Y =
\sum_{\substack{a,b = 1, \dots, N \\ \col(a) \leq \col(b)+1/2 \\
  \col(a) > 1 \\
  \col(b) < \lambda_1}} \bar Y_{a,b} e_{a,b}.
\]
Thus
\begin{align*}
 &-\alpha^{-1} \bar Y = \\
 &\qquad
 \bigg(
  \sum_{
 \substack{
    c,d = 1, \dots, N \\
    \lambda_{\row(c)} = \lambda_1 \\
    \row(c) = \row(d) \\
    \col(c) < \col(d)
   }}
   -(-1)^{\col(c) + \col(d)} e_{c,d}
   +
 \sum_{
 \substack{
    c,d = 1, \dots, N \\
    \lambda_{\row(c)} < \lambda_1 \\
    \row(c) = \row(d) \\
    \col(c) \geq \col(d)
   }}
   (-1)^{\col(c) + \col(d)} e_{c,d} \bigg) \\
   &\qquad \qquad \times
\sum_{\substack{a,b = 1, \dots, N \\ \col(a) \leq \col(b)+1/2 \\
  \col(a) > 1 \\
  \col(b) < \lambda_1}} \bar Y_{a,b} e_{a,b} \\
&\qquad \qquad =
 \sum_{\substack{
 c,b,d = 1, \dots, N \\
 \lambda_{\row(c)} = \lambda_1 \\
 \row(c) = \row(d) \\
 \col(c) < \col(d) \leq \col(b) +1/2 \\
  \col(b) < \lambda_1
 }}
 (-1)^{\col(c) + \col(d)} \bar Y_{d,b} e_{c,b} \\
 &\qquad \qquad \qquad \qquad
 +
 \sum_{\substack{
 c,b,d = 1, \dots, N \\
 \lambda_{\row(c)} < \lambda_1 \\
 \row(c) = \row(d) \\
 \col(c) \geq \col(d) > 1\\
 \col(d) \leq \col(b) +1/2 \\
  \col(b)< \lambda_1
 }}
 -(-1)^{\col(c) + \col(d)} \bar Y_{d,b} e_{c,b}
\end{align*}
Thus
\[
 (-\alpha^{-1} \bar Y)_{c,b} =
 \sum_{\substack{ d = 1, \dots, N\\ \
     \row(c) = \row(d) \\
    \col(c) < \col(d) \\
    \col(d) \leq \col(b) + 1/2 }}
 (-1)^{\col(c)+\col(d)} \bar Y_{d,b}
\]
if $\lambda_{\row(c)} = \lambda_1$ where $\col(b) < \lambda_1$,
and
\[
 (-\alpha^{-1} \bar Y)_{c,b} =
 \sum_{\substack{ d = 1, \dots, N\\ \
     \row(c) = \row(d) \\
    \col(c) \geq \col(d) > 1 \\
    \col(d) \leq \col(b) + 1/2 }}
  -(-1)^{\col(c)+\col(d)} \bar Y_{d,b}
\]
if $\lambda_{\row(c)} < \lambda_1$ where $\col(b) < \lambda_1$.

For $a_1, \dots, a_k \in \{1, \dots, N\}$ let
$n(a_1, \dots, a_k) =  | \{ a_i \mid \lambda_{\row(a_i)} < \lambda_1 \}|$,
and let $\col(a_1, \dots, a_k) = \sum_{i=1}^k \col(a_i)$.

So we have that
\begin{align*}
 &((-\alpha^{-1} \bar Y)^m)_{c,b}  =
 \sum (-\alpha^{-1} \bar Y)_{c,a_1} (-\alpha^{-1} \bar Y)_{a_1, a_2} \dots (-\alpha^{-1} \bar Y)_{a_{m-1},b} \\
  &\quad =
 \sum (-1)^{\col(c, a_1 ,\dots , a_{m-1} , d_1 , \dots ,d_{m})+n(c, a_1, \dots, a_{m-1}) } \bar Y_{d_1, a_1} \bar Y_{d_2, a_2} \dots \bar Y_{d_m, a_m}
\end{align*}
where in the last sum we are summing over all
$a_1, \dots, a_m, d_1, \dots, d_m \in \{1, \dots, N\}$ such that
\begin{itemize}
 \item $\row(d_1) = \row(c)$,
 \item $a_m = b$,
 \item $\col(d_i) > 1$ for all $i$,
 \item $\col(a_i) < \lambda_1$ for all $i$,
  \item $\col(d_i) \leq \col(a_i)+1/2$ for all $i$,
  \item $\row(d_{i+1}) = \row(a_i)$ for all $i < m$,
  \item $\col(a_i) < \col(d_{i+1})$ if $\lambda_{\row(a_i)} = \lambda_1$ for all $i < m$,
  \item $\col(a_i) \geq \col(d_{i+1})$ if $\lambda_{\row(a_i)} < \lambda_1$ for all $i < m$,
  \item $\col(c) < \col(d_{1})$ if $\lambda_{\row(c)} = \lambda_1$,
  \item $\col(c) \geq \col(d_{1})$ if $\lambda_{\row(c)} < \lambda_1$.
\end{itemize}

Next we note that for any matrix $A$ with rows and columns indexed by the same set as the rows of $\alpha^{-1}$, that
\begin{align*}
 &(A \alpha^{-1})_{c,d} = \sum_{\substack{ b = 1, \dots, N \\ \col(b) < \lambda_1}}
 A_{c,b} \alpha^{-1}_{b,d}  \\
 &\quad=
 \sum_{\substack{
  b = 1, \dots, N \\
  \col(b) < \lambda_1 \\
 \lambda_{\row(b)} = \lambda_1 \\
    \row(b) = \row(d) \\
    \col(d) > \col(b) }}
    -(-1)^{\col(b) + \col(d)} A_{c,b}
     +
 \sum_{\substack{
  b = 1, \dots, N \\
  \col(b) < \lambda_1 \\
 \lambda_{\row(b)} < \lambda_1 \\
    \row(b) = \row(d) \\
    \col(d) \leq \col(b) }}
    (-1)^{\col(b) + \col(d)} A_{c,b}
\end{align*}
Thus
\begin{align} \label{EQ:Ypower}
  &(-\alpha^{-1} \bar Y)^m \alpha^{-1})_{c,d} \\ \notag
  & \quad =
 \sum -(-1)^{\col(c, a_1,\dots , a_m , d_1 , \dots ,d_{m})+n(c, a_1, \dots, a_m) +\col(d)} \bar Y_{d_1, a_1} \bar Y_{d_2, a_2} \dots \bar Y_{d_m, a_m}
 \end{align}
and we are summing over all
$a_1, \dots, a_m, d_1, \dots, d_m \in \{1, \dots, N\}$ such that
\begin{itemize}
 \item $\row(d_1) = \row(c)$,
  \item $\row(a_m) = \row(d)$,
 \item $\col(d_i) > 1$ for all $i$,
 \item $\col(a_i) < \lambda_1$ for all $i$,
  \item $\col(d_i) \leq \col(a_i)+1/2$ for all $i$,
  \item $\row(d_{i+1}) = \row(a_i)$ for all $i < m$,
  \item $\col(a_i) < \col(d_{i+1})$ if $\lambda_{\row(a_i)} = \lambda_1$ for all $i < m$,
  \item $\col(a_i) \geq \col(d_{i+1})$ if $\lambda_{\row(a_i)} < \lambda_1$ for all $i < m$,
  \item $\col(c) < \col(d_{1})$ if $\lambda_{\row(c)} = \lambda_1$,
  \item $\col(c) \geq \col(d_{1})$ if $\lambda_{\row(c)} < \lambda_1$,
  \item $\col(a_m) < \col(d)$ if $\lambda_{\row(d)} = \lambda_1$,
  \item $\col(a_m) \geq \col(d)$ if $\lambda_{\row(d)} < \lambda_1$.
\end{itemize}

Now we can use these results to find a formula for the entries of
\[
L_R(z) = \left(
\Pi_{<\tfrac{\xi}{2}}
Y
\Psi_{>-\tfrac{\xi}{2}} \right)^{-1}
\Pi_{<\tfrac{\xi}{2}}
Y
\Psi_{-\tfrac{\xi}{2}}.
\]
Note that
\[
\Pi_{<\tfrac{\xi}{2}}
Y
\Psi_{-\tfrac{\xi}{2}}
 =
 \sum_{\substack{
 b= 1, \dots, N \\
 \col(b) = \lambda_1}} e_{b,b}+
 \sum_{\substack{
  a,b = 1, \dots, N \\
  \col(b) = \lambda_1 \\
  \col(a) > 1}}
 \bar Y_{a,b} e_{a,b}.
\]
Let $c,d \in \{1, \dots, N\}$ such that
$\col(c) \neq \lambda_1$ and $\col(d) = \lambda_1$.
So, from \eqref{EQ:Ypower}, we get that
\[
\left((-\alpha^{-1} \bar Y)^m \alpha^{-1}
\Pi_{<\tfrac{\xi}{2}}
Y
\Psi_{-\tfrac{\xi}{2}}\right)_{c,d} = A + B,
\]
where
\[
A= - \sum (-1)^{\col(c, a_1,\dots, a_m, d_1 , \dots ,d_{m})+n(c, a_1, \dots, a_m) +\lambda_1} \bar Y_{d_1, a_1} \bar Y_{d_2, a_2} \dots \bar Y_{d_m, a_m}
\]
where we are summing over all
$a_1, \dots, a_m, d_1, \dots, d_m \in \{1, \dots, N\}$ such that
\begin{itemize}
 \item $\row(d_1) = \row(c)$,
  \item $\row(a_m) = \row(d)$,
 \item $\col(d_i) > 1$ for all $i$,
 \item $\col(a_i) < \lambda_1$ for all $i$,
  \item $\col(d_i) \leq \col(a_i)+1/2$ for all $i$,
  \item $\row(d_{i+1}) = \row(a_i)$ for all $i < m$,
  \item $\col(a_i) < \col(d_{i+1})$ if $\lambda_{\row(a_i)} = \lambda_1$ for all $i < m$,
  \item $\col(a_i) \geq \col(d_{i+1})$ if $\lambda_{\row(a_i)} < \lambda_1$ for all $i < m$,
  \item $\col(c) < \col(d_{1})$ if $\lambda_{\row(c)} = \lambda_1$,
  \item $\col(c) \geq \col(d_{1})$ if $\lambda_{\row(c)} < \lambda_1$,
\end{itemize}
and
\[
B= -\sum (-1)^{\col(c, a_1, \dots, a_{m}, d_1, \dots,d_{m+1})+n(c, a_1, \dots, a_{m}) } \bar Y_{d_1, a_1} \bar Y_{d_2, a_2} \dots \bar Y_{d_{m+1}, a_{m+1}}
\]
where we are summing over all
$a_1, \dots, a_{m+1}, d_1, \dots, d_{m+1} \in \{1, \dots, N\}$ such that
\begin{itemize}
 \item $\row(d_1) = \row(c)$,
  \item $\row(a_{m+1}) = \row(d)$,
 \item $\col(d_i) > 1$ for all $i$,
 \item $\col(a_i) < \lambda_1$ for all $i \leq m$,
  \item $\col(a_{m+1}) = \lambda_1$,
  \item $\col(d_i) \leq \col(a_i) +1/2$ for all $i$,
  \item $\row(d_{i+1}) = \row(a_i)$ for all $i \leq m$,
  \item $\col(a_i) < \col(d_{i+1})$ if $\lambda_{\row(a_i)} = \lambda_1$ for all $i \leq m$,
  \item $\col(a_i) \geq \col(d_{i+1})$ if $\lambda_{\row(a_i)} \leq \lambda_1$ for all $i \leq m$,
  \item $\col(c) < \col(d_{1})$ if $\lambda_{\row(c)} = \lambda_1$,
  \item $\col(c) \geq \col(d_{1})$ if $\lambda_{\row(c)} < \lambda_1$,
\end{itemize}

Recall that $\bar Y_{a,b}$ is a monomial in $z^{-1/2}$ of degree $1+\col(b)-\col(a)$.
So we have the following theorem,
if we note that the monomials in the above $B$ term fits the form of the monomials in the theorem since if $s=a_{m+1}$, then
$1 = (-1)^{n(a_{m+1})+\lambda_1}$ since $a_{m+1} = \lambda_1$.
\begin{Theorem} \label{T:LzRexplicit}
    Let $p$ be a positive integer or half-integer.  Then
   The $z^{-p}$ term of $L_R(z)_{c,d}$ is
\[
-\sum (-1)^{\col(c, a_1, \dots, a_{s}, d_1, \dots,d_{s})+n(c, a_1, \dots, a_{s}) +\lambda_1}
\bar Y_{d_1, a_1} \bar Y_{d_2, a_2} \dots \bar Y_{d_{s}, a_{s}}
\]
where we are summing over all
$a_1, \dots, a_{s}, d_1, \dots, d_{s} \in \{1, \dots, N\}$ such that
\begin{itemize}
 \item $\row(d_1) = \row(c)$,
  \item $\row(a_{s}) = \row(d)$,
 \item $\col(d_i) > 1$ for all $i$,
 \item $\col(a_i) < \lambda_1$ for all $i < s$,
  \item $\col(d_i) \leq \col(a_i)+1/2$ for all $i$,
  \item $\row(d_{i+1}) = \row(a_i)$ for all $i < s$,
  \item $\col(a_i) < \col(d_{i+1})$ if $\lambda_{\row(a_i)} = \lambda_1$ for all $i < s$,
  \item $\col(a_i) \geq \col(d_{i+1})$ if $\lambda_{\row(a_i)} < \lambda_1$ for all $i < s$,
  \item $\col(c) < \col(d_{1})$ if $\lambda_{\row(c)} = \lambda_1$,
  \item $\col(c) \geq \col(d_{1})$ if $\lambda_{\row(c)} < \lambda_1$,
  \item $\col(a_1)-\col(d_1) + \dots + \col(a_s) - \col(d_s) -s = p$.
\end{itemize}
\end{Theorem}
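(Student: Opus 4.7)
The plan is to unpack the geometric series for $(\alpha+\bar Y)^{-1}$ and match, term by term, the two contributions $A$ and $B$ that are obtained just before the statement. Writing
\[
L_R(z) = (\alpha+\bar Y)^{-1}\,\Pi_{<\tfrac{\xi}{2}}Y\Psi_{-\tfrac{\xi}{2}} = \sum_{m\geq 0}(-\alpha^{-1}\bar Y)^m\alpha^{-1}\,\Pi_{<\tfrac{\xi}{2}}Y\Psi_{-\tfrac{\xi}{2}},
\]
the $(c,d)$-entry is, for each $m$, the sum of a term $A_m$ using $m$ factors of $\bar Y$ (from the identity piece $\sum_{\col(b)=\lambda_1}e_{b,b}$ of $\Pi_{<\tfrac{\xi}{2}}Y\Psi_{-\tfrac{\xi}{2}}$) and a term $B_m$ using $m+1$ factors (from the off-diagonal piece). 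For fixed $s\geq 1$, the $s$-factor terms come exactly from $A_s$ and $B_{s-1}$, and the plan is to combine them into the single expression in the theorem.

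The key step is the reconciliation of signs. The condition $\col(a_s)=\lambda_1$ in $B_{s-1}$ forces $\lambda_{\row(a_s)}=\lambda_1$ since $\col(a_s)\leq \lambda_{\row(a_s)}\leq\lambda_1$. Hence $a_s$ contributes $0$ to $n(\cdot)$, so $n(c,a_1,\dots,a_{s-1})=n(c,a_1,\dots,a_s)$, and $(-1)^{\col(a_s)}=(-1)^{\lambda_1}$. Inserting $\col(a_s)$ into the exponent rewrites the $B_{s-1}$ sign $(-1)^{\col(c,a_1,\dots,a_{s-1},d_1,\dots,d_s)+n(c,a_1,\dots,a_{s-1})}$ as
\[
(-1)^{\col(c,a_1,\dots,a_s,d_1,\dots,d_s)-\lambda_1+n(c,a_1,\dots,a_s)},
\]
which matches the $A_s$ sign (using $-\lambda_1\equiv\lambda_1\pmod 2$). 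Both $A_s$ and $B_{s-1}$ carry the same overall minus sign, reproducing the leading $-$ in the theorem.

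The summation ranges also merge cleanly: $A_s$ asks $\col(a_i)<\lambda_1$ for all $i\leq s$, $B_{s-1}$ asks it for $i<s$ together with $\col(a_s)=\lambda_1$, and their disjoint union is $\col(a_i)<\lambda_1$ for $i<s$ with $\col(a_s)$ otherwise unconstrained (beyond the trivially valid $\col(a_s)\leq\lambda_1$). The remaining constraints (row alignments $\row(d_1)=\row(c)$ and $\row(a_s)=\row(d)$, the transitional column inequalities between consecutive $\col(a_i)$ and $\col(d_{i+1})$, and the boundary comparisons at $c$) appear identically in $A_s$ and $B_{s-1}$ and pass through unchanged. Finally, by \eqref{EQ:Yab} each $\bar Y_{d_i,a_i}$ is a monomial in $z$ of exponent $\col(d_i)-\col(a_i)-1$, so the product has $z$-exponent $\sum_{i=1}^s(\col(d_i)-\col(a_i))-s$; setting this equal to $-p$ gives the last bullet of the theorem. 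The main obstacle throughout is the sign reconciliation, which collapses to the single observation that $\col(a_s)=\lambda_1$ forces $\lambda_{\row(a_s)}=\lambda_1$; once that is in hand everything else is routine bookkeeping.
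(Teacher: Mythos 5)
Your proposal is correct and follows essentially the same route as the paper: expand $(\alpha+\bar Y)^{-1}$ as a geometric series, split each entry against $\Pi_{<\tfrac{\xi}{2}}Y\Psi_{-\tfrac{\xi}{2}}$ into the $m$-factor term $A$ and the $(m+1)$-factor term $B$, regroup by the number of $\bar Y$ factors, and reconcile the signs via $\col(a_s)=\lambda_1\Rightarrow\lambda_{\row(a_s)}=\lambda_1$, which is exactly the paper's closing remark (spelled out in more detail by you). Note only that your (correct) degree count yields $\col(a_1)-\col(d_1)+\dots+\col(a_s)-\col(d_s)+s=p$, matching the convention of Theorem \ref{T:Lzexplicit} and indicating that the $-s$ in the last bullet of the statement is a typo rather than a gap in your argument.
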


Finally
we have that
\[
\Pi_{\tfrac{\xi}{2}} Y \Psi_{>\tfrac{-\xi}{2}}
 =
 \sum_{\substack{
 b= 1, \dots, N \\
 \col(b) = 1}} e_{b,b}+
 \sum_{\substack{
  a,b = 1, \dots, N \\
  \col(a) = 1 \\
  \col(b) < \lambda_1}}
 \bar Y_{a,b} e_{a,b}.
\]
So if $c,d \in \{1, \dots, N\}$ where $\col(c) = 1$ and $\col(d) = \lambda_1$, then we calculate that
\[
\left( \Pi_{\tfrac{\xi}{2}} Y \Psi_{>\tfrac{-\xi}{2}}
(-\alpha^{-1} \bar Y)^m \alpha^{-1}
\Pi_{<\tfrac{\xi}{2}}
Y
\Psi_{-\tfrac{\xi}{2}}\right)_{c,d} = L + M +P+Q,
\]
where
\[
L= \sum (-1)^{\col(a_1,\dots, a_m, d_1 , \dots ,d_{m})+n(a_1, \dots, a_m) +\lambda_1} \bar Y_{d_1, a_1} \bar Y_{d_2, a_2} \dots \bar Y_{d_m, a_m}
\]
where we are summing over all
$a_1, \dots, a_m, d_1, \dots, d_m \in \{1, \dots, N\}$ such that
\begin{itemize}
 \item $\row(d_1) = \row(c)$,
  \item $\row(a_m) = \row(d)$,
 \item $\col(d_i) > 1$ for all $i$,
 \item $\col(a_i) < \lambda_1$ for all $i$,
  \item $\col(d_i) \leq \col(a_i)+1/2$ for all $i$,
  \item $\row(d_{i+1}) = \row(a_i)$ for all $i < m$,
  \item $\col(a_i) < \col(d_{i+1})$ if $\lambda_{\row(a_i)} = \lambda_1$ for all $i < m$,
  \item $\col(a_i) \geq \col(d_{i+1})$ if $\lambda_{\row(a_i)} < \lambda_1$ for all $i < m$,
\end{itemize}
and
\[
M= \sum (-1)^{\col(a_1, \dots, a_{m}, d_1, \dots,d_{m+1})+n(a_1, \dots, a_{m}) } \bar Y_{d_1, a_1} \bar Y_{d_2, a_2} \dots \bar Y_{d_{m+1}, a_{m+1}}
\]
where we are summing over all
$a_1, \dots, a_{m+1}, d_1, \dots, d_{m+1} \in \{1, \dots, N\}$ such that
\begin{itemize}
 \item $\row(d_1) = \row(c)$,
  \item $a_{m+1} = d$,
 \item $\col(d_i) > 1$ for all $i$,
 \item $\col(a_i) < \lambda_1$ for all $i \leq m$,
  \item $\col(a_{m+1}) = \lambda_1$,
  \item $\col(d_i) \leq \col(a_i) +1/2$ for all $i$,
  \item $\row(d_{i+1}) = \row(a_i)$ for all $i \leq m$,
  \item $\col(a_i) < \col(d_{i+1})$ if $\lambda_{\row(a_i)} = \lambda_1$ for all $i \leq m$,
  \item $\col(a_i) \geq \col(d_{i+1})$ if $\lambda_{\row(a_i)} < \lambda_1$ for all $i \leq m$,
\end{itemize}
and
\[
P=  \sum (-1)^{\col(a_0, a_1,\dots, a_m, d_0, d_1, \dots ,d_{m})+n(a_0, a_1, \dots, a_m) +\lambda_1} \bar Y_{d_0, a_0} \bar Y_{d_1, a_1} \dots \bar Y_{d_m, a_m}
\]
where we are summing over all
$a_0, \dots, a_m, d_0, \dots, d_m \in \{1, \dots, N\}$ such that
\begin{itemize}
 \item $d_0 = c$,
  \item $\row(a_m) = \row(d)$,
 \item $\col(d_i) > 1$ for all $i \geq 1$,
   \item $\col(d_0) = 1$,
 \item $\col(a_i) < \lambda_1$ for all $i$,
  \item $\col(d_i) \leq \col(a_i)+1/2$ for all $i$,
  \item $\row(d_{i+1}) = \row(a_i)$ for all $i < m$,
  \item $\col(a_i) < \col(d_{i+1})$ if $\lambda_{\row(a_i)} = \lambda_1$ for all $i < m$,
  \item $\col(a_i) \geq \col(d_{i+1})$ if $\lambda_{\row(a_i)} < \lambda_1$ for all $i < m$,
\end{itemize}
and
\begin{align*}
& Q=  \\ \quad & \sum (-1)^{\col(a_0, a_1, \dots, a_{m}, d_0, d_1, \dots,d_{m+1})+n(a_0, a_1, \dots, a_{m}) } \bar Y_{d_0, a_0} \bar Y_{d_1, a_1} \dots \bar Y_{d_{m+1}, a_{m+1}}
\end{align*}
where we are summing over all
$a_1, \dots, a_{m+1}, d_1, \dots, d_{m+1} \in \{1, \dots, N\}$ such that
\begin{itemize}
 \item $d_0 = c$,
  \item $a_{m+1} = d$,
 \item $\col(d_i) > 1$ for all $i \geq 1$,
 \item $\col(a_i) < \lambda_1$ for all $i \leq m$,
  \item $\col(a_{m+1}) = \lambda_1$,
  \item $\col(d_i) \leq \col(a_i) +1/2$ for all $i$,
  \item $\row(d_{i+1}) = \row(a_i)$ for all $i \leq m$,
  \item $\col(a_i) < \col(d_{i+1})$ if $\lambda_{\row(a_i)} = \lambda_1$ for all $i \leq m$,
  \item $\col(a_i) \geq \col(d_{i+1})$ if $\lambda_{\row(a_i)} < \lambda_1$ for all $i \leq m$,
\end{itemize}

This all leads to a (somewhat) explicit formula for the coefficients of
$L(z) =
\Pi_{\tfrac{\xi}{2}} Y \Psi_{-\tfrac{\xi}{2}} -
\Pi_{\tfrac{\xi}{2}} Y \Psi_{> -\tfrac{\xi}{2}}
\left(\Pi_{< \tfrac{\xi}{2}} Y \Psi_{> -\tfrac{\xi}{2}}\right)^{-1}
\Pi_{< \tfrac{\xi}{2}} Y \Psi_{-\tfrac{\xi}{2}}$:
\begin{Theorem} \label{T:Lzexplicit}
    Let $p$ be a positive integer.  Then
   The $z^{-p}$ term of $L(z)_{c,d}$ is
\[
-\sum (-1)^{\col(a_1, \dots, a_{s}, d_1, \dots,d_{s})+n(a_1, \dots, a_{s}) + \lambda_1 }
\bar Y_{d_1, a_1} \bar Y_{d_2, a_2} \dots \bar Y_{d_{s}, a_{s}}
\]
where we are summing over all
$a_1, \dots, a_{s}, d_1, \dots, d_{s} \in \{1, \dots, N\}$ such that
\begin{itemize}
 \item $\row(d_1) = \row(c)$,
  \item $\row(a_{s}) = \row(d)$,
 \item $\col(d_i) > 1$ for all $i > 1$,
 \item $\col(a_i) < \lambda_1$ for all $i < s$,
  \item $\col(d_i) \leq \col(a_i)+1/2$ for all $i$,
  \item $\row(d_{i+1}) = \row(a_i)$ for all $i < s$,
  \item $\col(a_i) < \col(d_{i+1})$ if $\lambda_{\row(a_i)} = \lambda_1$ for all $i < s$,
  \item $\col(a_i) \geq \col(d_{i+1})$ if $\lambda_{\row(a_i)} < \lambda_1$ for all $i < s$,
  \item $\col(a_1)-\col(d_1) + \dots + \col(a_s) - \col(d_s) +s = p$.
\end{itemize}
\end{Theorem}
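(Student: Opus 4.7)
The plan is to combine the four sums $L$, $M$, $P$, $Q$ (summed over all $m\geq 0$) together with the direct contribution of $\Pi_{\tfrac{\xi}{2}} Y \Psi_{-\tfrac{\xi}{2}}$ into a single expression, and then extract the coefficient of $z^{-p}$ using the degree identity $\bar Y_{a,b}=z^{\col(a)-\col(b)-1}x_{a,b}$ from \eqref{EQ:Yab}. Starting from
$L(z)_{c,d}=(\Pi_{\tfrac{\xi}{2}} Y \Psi_{-\tfrac{\xi}{2}})_{c,d}-\sum_{m\geq 0}\bigl(\Pi_{\tfrac{\xi}{2}}Y\Psi_{>-\tfrac{\xi}{2}}(-\alpha^{-1}\bar Y)^m\alpha^{-1}\Pi_{<\tfrac{\xi}{2}}Y\Psi_{-\tfrac{\xi}{2}}\bigr)_{c,d}$,
the inner sum has already been decomposed as $L+M+P+Q$ in the discussion preceding the theorem, so what remains is essentially a matter of repackaging.

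The first observation is that since $c$ lies in column $1$, the row containing $c$ has maximal length $\lambda_1$, so $\lambda_{\row(c)}=\lambda_1$ is automatic; this collapses the conditions ``$\col(c)<\col(d_1)$ if $\lambda_{\row(c)}=\lambda_1$'' appearing in $L$ and $M$ to the single condition $\col(d_1)>1$. Next I would reindex the $P$ and $Q$ sums by shifting $d_0\mapsto d_1$, $a_0\mapsto a_1,\ldots$, so that the prepended factor $\bar Y_{d_0,a_0}=\bar Y_{c,a_0}$ becomes the new first factor with $\col(d_1)=\col(c)=1$. After this relabelling, $L$ and $M$ supply the summands with $\col(d_1)>1$, while $P$ and $Q$ supply those with $\col(d_1)=1$; orthogonally, $L$ and $P$ carry $\col(a_s)<\lambda_1$, while $M$ and $Q$ carry $\col(a_s)=\lambda_1$. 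The direct term $\bar Y_{c,d}$ absorbs the remaining $s=1$ case with $d_1=c$, $a_1=d$, sitting in the sector $\col(d_1)=1$, $\col(a_s)=\lambda_1$.

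Two small checks remain. For the signs, one verifies that $-(-1)^{\col(a_1,\ldots,a_s,d_1,\ldots,d_s)+n(a_1,\ldots,a_s)+\lambda_1}$ agrees with each of the five contributions under the reindexing; the extra $\lambda_1$ summand in the exponent absorbs the fact that $n(a_s)=0$ in the $M$, $Q$, and direct cases, where $\col(a_s)=\lambda_1$ forces $\lambda_{\row(a_s)}=\lambda_1$. For the degree, $\bar Y_{d_i,a_i}$ has $z$-exponent $\col(d_i)-\col(a_i)-1$ by \eqref{EQ:Yab}, so the total exponent of $\bar Y_{d_1,a_1}\cdots\bar Y_{d_s,a_s}$ is $\sum_i(\col(d_i)-\col(a_i))-s$, and setting this equal to $-p$ yields the stated constraint. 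The only real obstacle is clerical: ensuring the five contributing regimes partition the theorem's index set disjointly and exhaustively, with fully consistent signs across all of them.
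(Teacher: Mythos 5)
Your proposal is correct and follows essentially the same route as the paper, which likewise obtains the theorem by repackaging the five contributions (the direct term $\bar Y_{c,d}$ together with $L$, $M$, $P$, $Q$ summed over $m$) into one indexed sum, using $\col(c)=1\Rightarrow\lambda_{\row(c)}=\lambda_1$, the reindexing of the prepended/appended factors, the parity cancellation $(-1)^{\col(a_s)+n(a_s)+\lambda_1}=(-1)^{2\lambda_1}=1$ when $\col(a_s)=\lambda_1$, and the degree count from \eqref{EQ:Yab}. In fact you spell out the bookkeeping (the partition by $\col(d_1)=1$ versus $>1$ and $\col(a_s)=\lambda_1$ versus $<\lambda_1$) more explicitly than the paper does.
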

For completeness we note by the formula for
$\alpha^{-1}$ from \eqref{EQ:alphainv} that
the constant term of $L_{c,d}(z)$ is $-\delta_{\row(c), \row(d)} (-1)^{\lambda_1}$.

The following lemmas which will prove useful for proving Theorems \ref{T:Lk} and \ref{T:LRk}.

\begin{Lemma} \label{L:aanotappear}
 Let $a \in \{ 1, \dots, N\} $ such that $\col(a) = 1$ or $\col(a) = \lambda_1$.

 If $\col(a) = 1$, then
 let  $i,j \in \{1, \dots, n\}$ such that $i \neq \row(a)$.
 Then $\bar Y_{a,b}$ does not appear in any coefficient of $L(z)_{i,j}$ in the expression for $L(z)$ from Theorem \ref{T:Lzexplicit} for any $b$.

 If $\col(a) = \lambda_1$, then
 let  $i,j \in \{1, \dots, n\}$ such that $j \neq \row(a)$.
 Then $\bar Y_{b,a}$ does not appear in any coefficient of $L(z)_{i,j}$ in the expression for $L(z)$ from Theorem \ref{T:Lzexplicit} for any $b$.
\end{Lemma}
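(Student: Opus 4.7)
The plan is to prove the lemma by directly reading off the index constraints listed in Theorem \ref{T:Lzexplicit}. Since $\col(c) = 1$ and $\col(d) = \lambda_1$ determine $c$ and $d$ uniquely among boxes of those extreme columns within rows of length $\lambda_1$, it is unambiguous to index the entry as $L(z)_{i,j}$ with $i = \row(c)$ and $j = \row(d)$. Each contribution to $L(z)_{i,j}$ is a monomial $\bar Y_{d_1, a_1} \bar Y_{d_2, a_2} \cdots \bar Y_{d_s, a_s}$ satisfying the conditions listed in the theorem, and the argument uses only the two asymmetric ``boundary'' conditions $\col(d_k) > 1$ for $k > 1$ and $\col(a_k) < \lambda_1$ for $k < s$.

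For the first claim, suppose $\col(a) = 1$ and that some factor $\bar Y_{a, b}$ occurs in a surviving monomial. Then $a = d_k$ for some $k$. Since $\col(a) = 1$, the constraint $\col(d_k) > 1$ for $k > 1$ forces $k = 1$, i.e.\ $a = d_1$. The constraint $\row(d_1) = \row(c)$ then gives $\row(a) = i$, so no such term survives when $i \neq \row(a)$. The second claim is dual: if $\col(a) = \lambda_1$ and $\bar Y_{b, a}$ appears in a monomial, then $a = a_k$ for some $k$, the constraint $\col(a_k) < \lambda_1$ for $k < s$ forces $k = s$, and then $\row(a_s) = \row(d)$ yields $\row(a) = j$, which is incompatible with $j \neq \row(a)$.

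Because the argument is essentially bookkeeping, I do not expect any substantive obstacle; the only thing that needs to be done carefully is to locate the correct pair of boundary constraints among the long list of conditions in Theorem \ref{T:Lzexplicit} and observe that, taken together with the row-matching conditions $\row(d_1) = \row(c)$ and $\row(a_s) = \row(d)$, they pin any index with extreme column value to a single allowable position in the monomial, forcing its row to equal $i$ or $j$ respectively.
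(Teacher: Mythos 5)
Your proof is correct and follows essentially the same approach as the paper: eliminate the possible positions of the factor $\bar Y_{a,b}$ (resp.\ $\bar Y_{b,a}$) inside a monomial using the constraints of Theorem \ref{T:Lzexplicit}, then use $\row(d_1)=\row(c)$ (resp.\ $\row(a_s)=\row(d)$) to get the contradiction. The only (immaterial) difference is that you rule out interior positions via the conditions $\col(d_k)>1$ and $\col(a_k)<\lambda_1$ directly, whereas the paper uses the adjacent ordering conditions $\col(a_{k-1})<\col(d_k)$ and $\col(d_{k+1})>\col(a_k)$; both are valid.
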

\begin{proof}
Let
$\bar Y_{d_1, a_1} \bar Y_{d_2, a_2} \dots \bar Y_{d_{s}, a_{s}}$ be one of monomials from Theorem \ref{T:Lzexplicit}.
Note that $\lambda_{\row(a)} = \lambda_1$ in these cases.
Suppose that $\col(a) = 1$.
Since $\row(a) \neq i$, $d_1 \neq a$, so $\bar Y_{a,b}$
does not occur in the first position.
Now suppose that $d_k = a$ for some $k \geq 2$.
So $\col(a_{k-1}) < \col(a)$, which is not possible since $\col(a) = 1$.

Now suppose that $\col(a) = \lambda_1$.  Since $\row(a) \neq j$, $a_s \neq a$.
So $\bar Y_{b,a}$ does not occur in the last position.  Now suppose that
$a_k = a$ for some $k < \lambda_1$.  So $\col(d_{k+1}) > \col(a)$,
which cannot happen since $\col(a) = \lambda_1$.
\end{proof}

The same proof also proves the following:
\begin{Lemma} \label{L:aanotappear2}
 Let $a \in \{ 1, \dots, N\} $ such that $\col(a) = \lambda_1$.
 Let  $i, j \in \{1, \dots, n\}$ such that $j \neq \row(a)$.
 Then $\bar Y_{a,a}$ does not appear in any coefficient of $L_R(z)_{i,j}$ in the expression for $L_R(z)$ from Theorem \ref{T:LzRexplicit}.
\end{Lemma}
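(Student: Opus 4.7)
The plan is to run the same argument as the second case of the proof of Lemma \ref{L:aanotappear}, specialized to the formula for $L_R(z)$ from Theorem \ref{T:LzRexplicit}. Suppose for contradiction that $\bar Y_{a,a}$ appears as a factor $\bar Y_{d_k,a_k}$ in some monomial of the expression for the $z^{-p}$-coefficient of $L_R(z)_{i,j}$ given in Theorem \ref{T:LzRexplicit}. In particular $a_k = a$, hence $\col(a_k) = \col(a) = \lambda_1$.

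The constraint $\col(a_i) < \lambda_1$ required for all $i < s$ listed in Theorem \ref{T:LzRexplicit} then forces $k = s$. But the boundary condition $\row(a_s) = \row(d) = j$, together with $a_s = a$, yields $\row(a) = j$, contradicting the hypothesis $j \neq \row(a)$. Since the argument uses only the second index of $\bar Y_{d_k,a_k}$ and makes no reference to the first, it is word-for-word the same as the $\col(a) = \lambda_1$ case of the proof of Lemma \ref{L:aanotappear}, which is precisely what the author means by asserting that the same proof carries over. There is no real obstacle here: the key input is that Theorem \ref{T:LzRexplicit} only permits $\col(a_s) = \lambda_1$ at the final position $s$, and the matching row condition at that position immediately contradicts the assumption on $j$.
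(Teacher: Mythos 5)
Your proof is correct and matches the paper's intent: the paper simply remarks that the $\col(a)=\lambda_1$ case of the proof of Lemma \ref{L:aanotappear} carries over verbatim, which is exactly the argument you give (ruling out the last position via $\row(a_s)=\row(d)=j\neq\row(a)$, and earlier positions via a constraint of Theorem \ref{T:LzRexplicit}). The only cosmetic difference is that for positions $k<s$ you invoke the condition $\col(a_k)<\lambda_1$, whereas the paper's original argument uses $\col(a_k)<\col(d_{k+1})\leq\lambda_1$; both are listed constraints and yield the same contradiction.
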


\subsection{Proof of Theorem \ref{T:main}}
\begin{proof}
Let $w_i$ be a highest weight vector of $V$ such that $\lambda_i < \lambda_1$ and
$\lambda_i \geq \lambda_j$ for all $j$ such that $\lambda_j < \lambda_1$.
Note that the weight of $w_i$ is $(\lambda_i-1)/2$,
so the condition $b > \xi/2-l$ from the theorem is equivalent to $b > (\lambda_1-\lambda_i)/2$.
Let $c, d \in \{1, \dots, N\}$ such that $\row(c) = i$, $\lambda_{\row(d)} = \lambda_1$, and $\col(c) = \lambda_i$.
We need to prove that
$[m, L_R(z)]_{c,d}$ is a polynomial of degree less than $(\lambda_1 - \lambda_i)/2$ for all $m \in \g_{\geq 1/2}$.
Let $m \in \g_{\geq 1/2}$.
So $\phi(m)$ is a linear combination of $E_{a,b}$'s such that $\col(b)-\col(a) = 1$ or $\col(b) - \col(a) = 1/2$.
Let $E_{a,b}$ be such a term.  By Lemma \ref{L:help}, we need to prove that
$(E_{a,b} L_R(z) \bar 1)_{c,d}$ is a polynomial in $z^{-1/2}$ of degree at most
$(\lambda_1 - \lambda_i) /2$.  For this not to be zero, we must have that $a=c$, and we need to prove that
$(L_R(z) \bar 1)_{b,d}$
is a polynomial
of degree at most $(\lambda_1 - \lambda_i)/2$.
Since $\col(b) > \col(a)$ and $\lambda_i$ is maximal, we must have that $\lambda_{\row(b)} = \lambda_1$.
Let
$\bar Y_{d_1, a_1} \bar Y_{d_2, a_2} \dots \bar Y_{d_{s}, a_{s}}$ be a monomial in the expression of
$(L_R(z) \bar 1)_{b,d}$
from Theorem \ref{T:LzRexplicit}.
So $\col(a_1) > \col(b) > \col(c)$ which implies
that for each index $i$ in the monomial that
$\lambda_{\row(a_i)} = \lambda_{\row(d_i)} = \lambda_1$.
So $\col(a_i) \leq \col(d_i)$ and $\col(d_i) < \col(a_{i+1})$ for all $i$ in the monomial.
So $d_1-a_1 - 1 + d_2-a_2-1 + \dots d_s - a_s - 1 \leq (\lambda_1-\lambda_i)/2$,  thus the monomial has degree at most $(\lambda_1 - \lambda_i)/2$.
\end{proof}

\subsection{Pyramids for $L_k(z)$}
Let $k$ be a highest weight for the $\mathfrak{sl}_2$-triple $(f,2x,e)$ and the module $V$.
We consider the pyramid for
$L_k(z)$ to be a subpyramid of the pyramid of $L(z$), with the same column and row labels.
So if $k < \xi/2$, then the pyramid for $L_k(z)$ will be the pyramid for $L_k(z)$ with some columns removed, and its first column's label won't be $1$.
Let $s_k$ be the column label of the first column of the pyramid for $L_k(z)$, let $e_k$ be the column label of the last column, and let $r_k$ be the length of the longest row in
the pyramid for $L_k(z)$.
Note that
\begin{align} \label{EQ:sk}
 s_k &= \frac{\lambda_1+1}{2}-k, \\ \notag
  e_k &= \frac{\lambda_1+1}{2}+k, \text{ and } \\ \notag
 r_k &= 2k+1.
\end{align}

\section{Realizations of $\sp_N$ and $\so_N$}

\subsection{Choosing coordinates}
We need realizations for $\sp_N$ and $\so_N$ which are convenient to work with.  While the commonly used $f_{i,j} = e_{i,j} \pm e_{-j,-i}$
basis works well in most cases (see eg \cite{B1}), we can account for extra cases if we switch to the approach from \cite[$\S$2.1]{PrT}.

For this approach, we work with $\gl(V)$, where $\dim V = N$.
Choose $\eps \in \{\pm 1\}$.
Let $J$ be a symmetric form on $V$ if $\eps = 1$ and let $J$
be a skew-symmetric form on $V$ if $\eps = -1$.
Let $\sigma: \gl(V) \to \gl(V)$ be defined via
$\sigma(X) = -J^{-1} X^T J$, where we are considering
$X$ and $J$ to be matrices with respect to any basis of $V$.  Now $\sigma$ is an involution and $\sigma$ does not depend on the choice of basis.  Let $\g = \{X \in \gl(V) \mid \sigma(X) = X\}$.
Now $\g \cong \so_N$ if $\eps = 1$ and $\g \cong \sp_N$ if $\eps = -1$.

Let $f \in \g$ be a nilpotent element in $\g$.
Let $\lambda_1 \geq \lambda_2 \dots \geq \lambda_n$ be the Jordan type of $f$.  By $\mathfrak{sl}_2$ representation theory, there exists
a set of highest weight vectors $\{w_1, \dots, w_n\}$ such that
$\{f^s w_i \mid 1 \leq i \leq n, 0 \leq s < \lambda_i\}$ is a basis of $\g$.
Let $V[i] = \{f^s w_i\}$ for $i = 1, \dots, n$.

The following is \cite[Lemma 1]{PrT}:
\begin{Lemma}
 There exists an involution $i \mapsto i'$ on $\{1, \dots, n\}$ such that
 \begin{enumerate}
  \item $\lambda_i = \lambda_i'$ for all $i$,
  \item $(V[i], V[j]) = 0$ if $i \neq j'$,
  \item $i = i'$ if and only if $\eps (-1)^{\lambda_i} = -1$.
 \end{enumerate}
\end{Lemma}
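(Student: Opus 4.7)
My plan is to reduce the statement to standard linear algebra on $\mathfrak{sl}_2$-isotypic multiplicity spaces, exploiting that the whole triple $(f,2x,e)$ lies in $\g$, so the form $(\cdot,\cdot)$ is $\mathfrak{sl}_2$-invariant in addition to being $\g$-invariant. The first observation is that for each pair $i,j$, the restriction of the form to $V[i]\times V[j]$ is an $\mathfrak{sl}_2$-invariant bilinear pairing between two irreducible $\mathfrak{sl}_2$-modules. By Schur's lemma together with the self-duality of $\mathfrak{sl}_2$-irreducibles, this pairing vanishes unless $\lambda_i=\lambda_j$, and when nonzero it is unique up to scalar. A short computation using the basis $\{f^a w_i\}$ and the identity $(fv,w)=-(v,fw)$ shows that the unique nonzero invariant form on the $m$-dimensional irreducible satisfies $(f^a v,f^{m-1-a} v)=(-1)^a (v,f^{m-1}v)$, and in particular has intrinsic symmetry $(-1)^{m-1}$.

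Next, for each positive integer $m$ occurring among the $\lambda_i$, set $I_m=\{i:\lambda_i=m\}$ and let $H_m=\operatorname{span}\{w_i:i\in I_m\}$. By the first step, the restriction of $(\cdot,\cdot)$ to $\bigoplus_{i\in I_m} V[i]$ is completely captured by the bilinear form $B_m$ on $H_m$ defined by $B_m(w_i,w_j):=(w_i,f^{m-1}w_j)$. Non-degeneracy of the global form together with orthogonality between distinct isotypic components (from the first step) forces $B_m$ to be non-degenerate, and comparing the symmetry $\eps$ of $(\cdot,\cdot)$ with the intrinsic symmetry $(-1)^{m-1}$ of the $\mathfrak{sl}_2$-factor shows that $B_m$ has symmetry $\eps(-1)^{m-1}$.

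The key step is then a change of basis: replace the given $\{w_i:i\in I_m\}$ by a basis of $H_m$ in which $B_m$ takes standard form. If $\eps(-1)^{m-1}=1$, so $B_m$ is symmetric, take an orthonormal basis for $B_m$ and declare $i'=i$ for every $i\in I_m$. If $\eps(-1)^{m-1}=-1$, so $B_m$ is skew-symmetric (which incidentally forces $|I_m|$ to be even, recovering the standard parity constraints on Jordan types in $\so_N$ and $\sp_N$), take a symplectic basis for $B_m$ and declare $i'$ to be the symplectic partner of $i$. Since each new basis vector is still a highest-weight vector of the same $\mathfrak{sl}_2$-weight, the decomposition $V=\bigoplus_i V[i]$ is preserved.

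With this choice, property (1) is immediate from $i'\in I_m$, and property (2) follows from $B_m(w_i,w_j)=0$ whenever $j\neq i'$. For property (3), the chain of equivalences
\[
i=i' \;\iff\; B_m \text{ is symmetric} \;\iff\; \eps(-1)^{m-1}=1 \;\iff\; \eps(-1)^{\lambda_i}=-1
\]
closes the argument. The main point to watch is the sign bookkeeping for the intrinsic $(-1)^{m-1}$ symmetry of the invariant form on the $\mathfrak{sl}_2$-irreducible, together with the mild verification that re-choosing the $w_i$ within each multiplicity space leaves the decomposition $V=\bigoplus_i V[i]$ and all preceding structure intact.
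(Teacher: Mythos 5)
Your argument is correct, but note that the paper does not prove this statement at all: it is quoted verbatim as \cite[Lemma 1]{PrT}, so there is no in-paper proof to compare against. What you have written is a clean, self-contained proof along the standard lines (and essentially the argument underlying Premet--Topley's \S 2.1): invariance of the form under the $\mathfrak{sl}_2$-triple, Schur's lemma giving orthogonality of distinct isotypic components, the sign identity $(f^a v, f^{m-1-a} v') = (-1)^a (v, f^{m-1} v')$ yielding the intrinsic symmetry $(-1)^{m-1}$ of the pairing on an $m$-dimensional irreducible, and hence symmetry type $\eps(-1)^{m-1}$ for the induced nondegenerate form $B_m$ on each multiplicity space $H_m$; putting $B_m$ in orthonormal or symplectic normal form then produces the involution, and your chain of equivalences for (3) is right since $\eps(-1)^{m-1}=1$ is the same as $\eps(-1)^{\lambda_i}=-1$. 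One small point of reading: as literally stated, the lemma fixes the $w_i$ in advance, and for a generic choice of highest weight vectors no involution satisfies (2); your proof (correctly) re-chooses the $w_i$ within each $H_m$. This is the intended meaning --- the paper's subsequent sentences about reindexing and normalizing the $w_i$ so that $(w_i, f^{\lambda_i-1}w_{i'})=1$ confirm that the basis is to be adapted to the form --- but it would be worth one sentence in your write-up making explicit that the conclusion is ``the $w_i$ may be chosen so that\dots''. Your parenthetical that skew-symmetry of $B_m$ forces $|I_m|$ even is also a correct and worthwhile sanity check against the known parity constraints on Jordan types for $\so_N$ and $\sp_N$.
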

Note that the third condition says $i=i'$ if $\eps = 1$ and $\lambda_i$ is odd, or $\eps = -1$ and $\lambda_i$ is even.
 Furthermore we can index the $w_i$'s so that
 $i' \in \{i-1, i, i+1\}$ for all $i$.
 Now, as explained in \cite[$\S$2.1]{PrT},
 $(f^{\lambda_i-1} w_i, f^s w_i) = 0$ for all $i$ and $s > 0$,
 and the vectors $\{w_i\}$ can be normalized so that
 $(w_i, f^{\lambda_i-1} w_{i'}) = 1$ whenever $i \leq i'$.

 For the following lemma we
consider $J$ to be a matrix with respect to the basis
$\{f^s w_i \mid 1 \leq i \leq n, 0 \leq s < \lambda_i\}$,
and we let $s_i' = \lambda_i -1 -s$ for all $s \in \mathbb{Z}$.
When the $i$ is clear from context, we write $s' = s_i'$.
 \begin{Lemma} \label{L:J}
     The following holds for all $i \in \{1, \dots, n\}$ and \\
     $s \in \{0, \dots, \lambda_i - 1\}$:
    \begin{enumerate}
     \item $(f^s w_i, f^{s'} w_{i'}) = (-1)^{s}$ if $i \leq i'$,
     \item $J f^{s'} w_{i'} = (-1)^s f^s w_i$ if $i \leq i'$,
     \item $(f^s w_i, f^{s'} w_{i'}) = \eps (-1)^{s'}$ if $i > i'$,
     \item $J f^{s'} w_{i'} = \eps (-1)^{s'} f^s w_i$ if $i > i'$.
    \end{enumerate}
 \end{Lemma}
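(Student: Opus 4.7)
The plan is to reduce everything to the adjoint property of $f$ with respect to the form together with the normalization $(w_i, f^{\lambda_i-1}w_{i'}) = 1$ (for $i \le i'$) that is set up just before the statement, then read off parts (2) and (4) from parts (1) and (3) by unwinding the definition of the matrix $J$ in the given basis.

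First I would observe that $\sigma(f)=f$ together with $\sigma(X) = -J^{-1}X^T J$ says precisely that $f$ is skew with respect to the bilinear form $(x,y) := x^T J y$, so $(fx,y)+(x,fy)=0$ and hence $(f^a x, y) = (-1)^a (x, f^a y)$. Combining this with the orthogonality statement $(V[i],V[j])=0$ whenever $j\ne i'$ (from the previous lemma) and with the $\mathfrak{sl}_2$-weight grading on $V$ (which forces any nonzero pairing between homogeneous vectors to have total weight zero), I would deduce that $(w_i, f^k w_{i'})$ vanishes for $k\ne \lambda_i-1$ and equals $1$ for $k=\lambda_i-1$ whenever $i\le i'$.

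Given this, part (1) is immediate: for $i\le i'$, write
\[
(f^s w_i, f^{s'} w_{i'}) = (-1)^s (w_i, f^{s+s'} w_{i'}) = (-1)^s,
\]
since $s+s' = \lambda_i-1$. For part (2), I would expand $J f^{s'} w_{i'}$ in the basis $\{f^t w_j\}$: since the matrix $J$ in this basis has $(k,l)$-entry $(e_k, e_l)$, the coefficient of $f^t w_j$ in $J f^{s'} w_{i'}$ equals $(f^t w_j, f^{s'} w_{i'})$. The orthogonality relation forces $j=i$, the weight (or equivalently the vanishing result above) forces $t=s$, and the surviving coefficient is $(-1)^s$ by part (1); this yields $J f^{s'} w_{i'} = (-1)^s f^s w_i$.

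For parts (3) and (4), I would use the symmetry $(x,y)=\eps(y,x)$ of the form to swap $i$ and $i'$ when $i>i'$, and apply parts (1), (2) to the pair $(i',i)$ with $i'\le i$. Since $\lambda_i=\lambda_{i'}$, the role of $s'$ in the $i'$-computation is played by $s$ in the $i$-computation, which is exactly what produces the sign $\eps(-1)^{s'}$ in (3); then (4) follows from (3) by the same basis-expansion argument used for (2). The main thing to watch is bookkeeping of signs and the fact that the $s\mapsto s'$ involution is the same for $i$ and $i'$ because $\lambda_i = \lambda_{i'}$; there are no deeper obstacles, as all the representation-theoretic content is already packaged into the previous lemma and the normalization convention.
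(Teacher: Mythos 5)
Your proposal is correct and follows essentially the same route as the paper: part (1) from the normalization $(w_i,f^{\lambda_i-1}w_{i'})=1$ together with the skew-adjointness $Jf+f^TJ=0$, part (3) from part (1) via the $\eps$-symmetry of the form after swapping $i$ and $i'$, and parts (2) and (4) by reading off columns of the Gram matrix $J_{k,l}=(v_k,v_l)$. The only difference is that you spell out the weight/orthogonality argument for why the off-diagonal pairings vanish, which the paper leaves implicit.
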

 \begin{proof}
 The first point follows from
 $(w_i, f^{\lambda_i-1} w_{i'}) = 1$, $(v,w) = v^TJw$ for all $v,w \in V$, and $JX+X^T J = 0$ for all $X \in \g$.

 For the third point, assume that $i > i'$.  Then by the first point
 $(f^{s'} w_{i'}, f^s w_i) = (-1)^{s'}$ which implies
 $(f_s w_i, f^{s'}w_{i'}) = \eps (f^{s'} w_{i'}, f^s w_i) = \eps(-1)^{s'}$.

 The second and forth points follow from the fact that the matrix of $J$ with respect to a basis $\{v_1, \dots, v_N\}$ is defined via
 $J_{i,j} = (v_i, v_j)$.
 \end{proof}

We define $e_{i,s}^{j,t} \in \gl(V)$ via
\[
e_{i,s}^{j,t} f^p w_k = \delta_{i,k} \delta_{s,p} f^t w_j.
\]
Note that $e_{i,s}^{j,t} = 0$ if $t \geq \lambda_j$.
Now $\{e_{i,s}^{j,t} \mid 1 \leq i,j \leq n, 0 \leq s < \lambda_i, 0 \leq t < \lambda_j\}$ is a basis of $\gl(V)$.
Thus $\g = \{X \in \gl(V) \mid \sigma(X) = X\}$ is spanned by
$\{e_{i,s}^{j,t} + \sigma(e_{i,s}^{j,t}) \}$.

For $i \in \{1, \dots, n\}$ and $s \in \Z$  define
\[
   \eta_{i \leq i'}(s) = \begin{cases}
                          (-1)^s & \text{ if } i \leq i'; \\
                          \eps(-1)^{s'} & \text{ if } i > i'.
                         \end{cases}
\]
So by Lemma \ref{L:J}
$J f^{s'} w_i' = \eta_{i \leq i'}(s) f^s w_i$ in all cases.

\begin{Lemma}
   Let $i,j \in \{1, \dots n\}, 0 \leq s < \lambda _i, 0 \leq t < \lambda_j$.  Then
  \[
   \sigma(e_{i,s}^{j,t}) = -\eps \eta_{j \leq j'}(t) \eta_{i' \leq i}(s') e_{j',t'}^{i',s'}.
   \]
\end{Lemma}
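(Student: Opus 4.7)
The plan is to unwind $\sigma(X) = -J^{-1} X^T J$ on the elementary matrix $e_{i,s}^{j,t}$ using the basis $\{f^p w_k\}$ and Lemma \ref{L:J}, then reconcile the resulting expression with the one in the statement.

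First I would observe that in matrix form (with rows and columns indexed by pairs $(k,p)$), the operator $e_{i,s}^{j,t}$ has a single 1 in row $(j,t)$, column $(i,s)$. So its transpose, as an operator, is $(e_{i,s}^{j,t})^T = e_{j,t}^{i,s}$. Next I would recast Lemma \ref{L:J} as a single formula $J f^{s'} w_{i'} = \eta_{i\le i'}(s)\, f^s w_i$, which by reindexing gives $J f^p w_k = \eta_{k'\le k}(p')\, f^{p'} w_{k'}$, and an entirely analogous formula for $J^{-1}$ (using that $\eta_{i\le i'}(s) = \pm 1$ so it equals its own inverse): $J^{-1} f^s w_i = \eta_{i\le i'}(s)\, f^{s'} w_{i'}$.

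Then I would trace the action of $J^{-1} e_{j,t}^{i,s} J$ on an arbitrary basis vector $f^p w_k$: apply $J$ to get a scalar times $f^{p'} w_{k'}$, hit by $e_{j,t}^{i,s}$ which is nonzero only when $k=j'$ and $p=t'$, then apply $J^{-1}$. This produces the identity
\[
J^{-1} e_{j,t}^{i,s} J = \eta_{j\le j'}(t)\, \eta_{i\le i'}(s)\, e_{j',t'}^{i',s'},
\]
hence $\sigma(e_{i,s}^{j,t}) = -\eta_{j\le j'}(t)\, \eta_{i\le i'}(s)\, e_{j',t'}^{i',s'}$.

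It remains to check the identity $\eta_{i\le i'}(s) = \eps\, \eta_{i'\le i}(s')$, which converts my expression into the one in the statement. This is the bookkeeping step I expect to be the only real sticking point. It splits into three cases according to the involution: if $i<i'$ both sides unwind to $(-1)^s$ and $\eps\cdot\eps(-1)^s$ respectively; if $i>i'$ both sides unwind to $\eps(-1)^{s'}$; and if $i=i'$ one gets $(-1)^s$ versus $\eps(-1)^{s'} = \eps(-1)^{\lambda_i-1-s}$, so the identity reduces to $\eps(-1)^{\lambda_i} = -1$, which is exactly the third condition of the earlier lemma characterizing the fixed points of the involution. With this verified in each case, the desired formula follows.
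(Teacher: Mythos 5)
Your proof is correct and follows essentially the same route as the paper: conjugate the transposed elementary matrix by $J$, trace the action on the basis $\{f^p w_k\}$ via Lemma \ref{L:J}, and read off the result. The only cosmetic difference is that the paper absorbs the $\eps$ up front by using $J^{-1}=\eps J$, whereas you carry $J^{-1}$ through and reconcile at the end via $\eta_{i\le i'}(s)=\eps\,\eta_{i'\le i}(s')$ (equivalent to the identity $\eta_{i\le i'}(s)\,\eta_{i'\le i}(s')=\eps$ the paper records in the proof of Lemma \ref{L:rel1}); your case check of that identity, including the appeal to $\eps(-1)^{\lambda_i}=-1$ when $i=i'$, is correct.
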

\begin{proof}
Let $k \in \{1, \dots, n\}$ and let $p \in \{0, \dots, \lambda_k-1\}$.  Now
\begin{align*}
 \sigma(e_{i,s}^{j,t})f^{p'}w_{k'} &=
   -J^{-1} (e_{i,s}^{j,t})^T J f^{p'} w_{k'} =
   -\eps J e_{j,t}^{i,s} J f^{p'} w_{k'} \\
   & = -\eps \eta_{k \leq k'}(p) J e_{j,t}^{i,s} f^{p} w_{k}
   = -\eps \eta_{k \leq k'}(p) \delta_{j,k} \delta_{t, p} J  f^{s} w_{i} \\
   &= -\eps \eta_{k \leq k'}(p) \eta_{i' \leq i}(s') \delta_{j,k} \delta_{t, p} f^{s'} w_{i'} \\
   &= -\eps \eta_{j \leq j'}(t) \eta_{i' \leq i}(s') \delta_{j,k} \delta_{t, p} f^{s'} w_{i'}.
 \end{align*}
 The lemma now follows.
\end{proof}

We define
\[
  f_{i,s}^{j, t} = e_{i,s}^{j,t} + \sigma(e_{i,s}^{j,t})
 = e_{i,s}^{j,t} -\eps \eta_{j \leq j'}(t) \eta_{i' \leq i}(s') e_{j',t'}^{i',s'}.
\]

\begin{Lemma} \label{L:f0}
Let $i,j \in \{1, \dots, n\}$, let $s \in \{0, \dots, \lambda_i-1\}$, and let $t \in \{0, \dots, \lambda_j-1\}$.  Then
 $f_{i,s}^{j,t} = 0$ if and only if $j=i'$, $t=s'$, and $\eps = 1$.
\end{Lemma}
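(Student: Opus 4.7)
The plan is to work directly from the expanded definition
\[
f_{i,s}^{j,t} = e_{i,s}^{j,t} - \eps\,\eta_{j \leq j'}(t)\,\eta_{i' \leq i}(s')\, e_{j',t'}^{i',s'},
\]
and to exploit the fact that the $e_{a,p}^{b,q}$ form a basis of $\gl(V)$. Thus $f_{i,s}^{j,t}$ can only vanish if the two basis vectors on the right hand side actually coincide and the coefficient in front of their common value is zero. Hence the proof splits into two parts: identify when the two basis vectors agree, and then compute the resulting scalar.

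First I would observe that $e_{i,s}^{j,t} = e_{j',t'}^{i',s'}$ as basis elements forces $i = j'$, $s = t'$, $j = i'$, $t = s'$. Because $i \mapsto i'$ is an involution, $i = j'$ and $j = i'$ are the same condition. Furthermore, once $j = i'$ we have $\lambda_j = \lambda_i$, so $t = s' = \lambda_i - 1 - s$ is equivalent to $s = \lambda_j - 1 - t = t'$. Therefore the two basis vectors coincide if and only if $j = i'$ and $t = s'$. If either of these fails, $f_{i,s}^{j,t}$ is a nonzero combination of two distinct basis vectors and is nonzero.

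Next, assume $j = i'$ and $t = s'$. Then the right hand side collapses to
\[
f_{i,s}^{i',s'} = \bigl(1 - \eps\,\eta_{i' \leq i}(s')\,\eta_{i' \leq i}(s')\bigr)\, e_{i,s}^{i',s'},
\]
since $j \leq j'$ is the same as $i' \leq i$ after substituting $j = i'$. I would then handle the three subcases $i = i'$, $i < i'$, $i > i'$ separately. In the case $i = i'$ both $\eta$ factors equal $(-1)^{s'}$, giving product $+1$. In the case $i < i'$ one has $j > j'$ and $i' > i$, so both $\eta$ factors fall in the second clause of the definition; they become $\eps(-1)^{t'}$ and $\eps(-1)^{(s')'} = \eps(-1)^s$ respectively, and using $t' = s$ the product is again $+1$. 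The case $i > i'$ is symmetric and also yields $+1$. In every subcase one obtains $f_{i,s}^{i',s'} = (1-\eps)\,e_{i,s}^{i',s'}$, which vanishes if and only if $\eps = 1$.

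There is no real obstacle; the only care needed is in handling the notation $s' = \lambda_i - 1 - s$ (and the identity $(s')' = s$) consistently across the subcases, and verifying that the three parts of the case split each produce the same product $+1$ of $\eta$'s so that the final scalar is uniformly $1 - \eps$.
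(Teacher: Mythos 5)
Your proposal is correct and follows essentially the same route as the paper: observe that vanishing forces the two basis elements $e_{i,s}^{j,t}$ and $e_{j',t'}^{i',s'}$ to coincide (i.e.\ $j=i'$, $t=s'$), and then check that the coefficient collapses to $1-\eps$ because the two $\eta$-factors agree and square to $1$. Your case analysis on $i=i'$, $i<i'$, $i>i'$ is just a more explicit version of the paper's one-line observation that $\eta_{j\leq j'}(t)\,\eta_{i'\leq i}(s')=\eta_{j\leq j'}(t)^2=1$.
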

\begin{proof}
Clearly in order for $f_{i,s}^{j,t}$ to be zero we need
$j=i'$, $t=s'$ and
$\eps \eta_{j \leq j'}(t) \eta_{i' \leq i}(s')=1$.
Now
$\eps \eta_{j \leq j'}(t) \eta_{i' \leq i}(s') =
\eps \eta_{j \leq j'}(t) \eta_{j\leq j'}(t)=\eps$,
so we need $\eps = 1$.
\end{proof}

\begin{Lemma} \label{L:rel1}
\begin{align*}
   \left[ f_{i,s}^{j,t}, f_{k,p}^{l, q} \right] &=
   \delta_{i,l} \delta_{s,q} f_{k,p}^{j,t} -
   \delta_{k,j} \delta_{p,t} f_{i,s}^{l,q} \\
   &\quad - \delta_{i,k'} \delta_{s, p'} \eps \eta_{l \leq l'}(q) \eta_{i \leq i'}(s) f_{l',q'}^{j,t} \\
   &\quad + \delta_{l', j} \delta_{q', t} \eps \eta_{l \leq l'}(q) \eta_{k' \leq k}(p') f_{i,s}^{k',p'}.
\end{align*}
\end{Lemma}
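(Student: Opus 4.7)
The plan is to expand $[f_{i,s}^{j,t}, f_{k,p}^{l,q}]$ bilinearly using $f_{i,s}^{j,t} = e_{i,s}^{j,t} + \sigma(e_{i,s}^{j,t})$, producing the four-term sum
\[
[e_{i,s}^{j,t}, e_{k,p}^{l,q}] + [\sigma(e_{i,s}^{j,t}), \sigma(e_{k,p}^{l,q})] + [e_{i,s}^{j,t}, \sigma(e_{k,p}^{l,q})] + [\sigma(e_{i,s}^{j,t}), e_{k,p}^{l,q}].
\]
The key observation is that $\sigma$ is an involutive Lie algebra homomorphism on $\gl(V)$ — a one-line check from $\sigma(X)\sigma(Y) = J^{-1}X^TY^T J$ shows $\sigma([X,Y]) = [\sigma(X),\sigma(Y)]$. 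This lets me regroup the four terms as $(A+\sigma(A)) + (B+\sigma(B))$, where $A = [e_{i,s}^{j,t}, e_{k,p}^{l,q}]$ and $B = [e_{i,s}^{j,t}, \sigma(e_{k,p}^{l,q})]$; each such parenthesized expression lies in $\g$ and can be rewritten directly in the $f$-basis.

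For $A$ I would use the elementary identity
\[
[e_{i,s}^{j,t}, e_{k,p}^{l,q}] = \delta_{i,l}\delta_{s,q}\, e_{k,p}^{j,t} - \delta_{j,k}\delta_{t,p}\, e_{i,s}^{l,q},
\]
whence $A + \sigma(A) = \delta_{i,l}\delta_{s,q} f_{k,p}^{j,t} - \delta_{j,k}\delta_{t,p} f_{i,s}^{l,q}$, matching the first line of the claim.

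For $B$ I would substitute the formula $\sigma(e_{k,p}^{l,q}) = -\eps \eta_{l \leq l'}(q)\eta_{k' \leq k}(p')\, e_{l',q'}^{k',p'}$ from the preceding lemma and apply the same elementary commutator identity to obtain
\[
B = -\eps \eta_{l \leq l'}(q)\eta_{k' \leq k}(p')\bigl(\delta_{i,k'}\delta_{s,p'}\, e_{l',q'}^{j,t} - \delta_{j,l'}\delta_{t,q'}\, e_{i,s}^{k',p'}\bigr),
\]
and then $B + \sigma(B)$ will deliver the last two lines after repackaging each $e + \sigma(e)$ as the corresponding $f$. The one place most likely to trip me up — and the only genuinely nontrivial bookkeeping — is the first surviving summand: the Kronecker factor $\delta_{i,k'}\delta_{s,p'}$ forces $k' = i$ and $p' = s$, and inserting these into the coefficient $\eta_{k' \leq k}(p')$ converts it to $\eta_{i \leq i'}(s)$ (since $k' = i$ makes the condition $k' \leq k$ coincide with $i \leq i'$). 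Once that substitution is carried out, all four pieces assemble to exactly the claimed formula; cases where a resulting $f_{a,b}^{c,d}$ vanishes by Lemma \ref{L:f0} pose no issue, since the equation holds on the nose in $\gl(V)$ before restricting.
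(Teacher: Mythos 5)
Your proposal is correct and is exactly the ``straightforward calculation'' the paper alludes to: expand bilinearly, use the matrix-unit commutator identity and the formula for $\sigma(e_{i,s}^{j,t})$, and repackage via $f=e+\sigma(e)$. Your organization through the observation that $\sigma$ is a Lie algebra involution (so the four cross terms group as $A+\sigma(A)$ and $B+\sigma(B)$) is a clean way to halve the bookkeeping, and your handling of the one delicate coefficient substitution $\eta_{k'\leq k}(p')=\eta_{i\leq i'}(s)$ under $k'=i$, $p'=s$ is right, using $\lambda_k=\lambda_{k'}$ so that $(p')'=s'$; this route in fact sidesteps the identity $\eta_{i\leq i'}(s)\,\eta_{i'\leq i}(s')=\eps$ that the paper's proof singles out as the useful auxiliary fact.
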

\begin{proof}
 This is straight forward calculation, however if will be useful to note that
 $\eta_{i \leq i'}(s) \eta_{i' \leq i}(s') = \eps$ for all $i$.
\end{proof}

\begin{Lemma} \label{L:basis}
If $\eps = -1$, then
  \begin{align*}
&\{ f_{i,s}^{j,t} \mid 1 \leq i,j \leq n, j \notin \{i, i'\}, i< j, 0 \leq s < \lambda_i, 0 \leq t < \lambda_j \} \\
& \quad \cup \{ f_{i,s}^{j,t} \mid 1 \leq i \leq  n, j \in \{i, i'\}, s+t\leq \lambda_i-1 , 0 \leq s,t < \lambda_i\}
  \end{align*}
  is a basis of $\g$.

  If $\eps = 1$, then
  \begin{align*}
&\{ f_{i,s}^{j,t} \mid 1 \leq i,j \leq n, j \notin \{i, i'\}, i< j, 0 \leq s < \lambda_i, 0 \leq t < \lambda_j \} \\
& \quad \cup \{ f_{i,s}^{j,t} \mid 1 \leq i \leq  n, j \in \{i, i'\},
s+t< \lambda_i-1, 0 \leq s,t < \lambda_i\}
  \end{align*}
  is a basis of $\g$.
\end{Lemma}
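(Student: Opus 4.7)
The plan is to realize the two proposed sets as systems of orbit representatives for the involution $(i,s,j,t) \mapsto (j',t',i',s')$ acting on the full spanning family $\{f_{i,s}^{j,t}\}$, and then to verify linear independence via a dimension count.

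First, since $\g$ is the $+1$-eigenspace of $\sigma$ acting on $\gl(V)$, the symmetrization map $X \mapsto X + \sigma(X)$ surjects onto $\g$. Applying this to the standard basis $\{e_{i,s}^{j,t}\}$ of $\gl(V)$ shows that the full family $\{f_{i,s}^{j,t}\}$ spans $\g$. A direct computation from the definition, using the identity $\eta_{i \leq i'}(s)\,\eta_{i' \leq i}(s') = \eps$ (which follows from the definition of $\eta$ together with $(-1)^{\lambda_i - 1} = \eps$ when $i = i'$), then yields the key identification
\[
f_{j',t'}^{i',s'} = -\eps\, \eta_{j \leq j'}(t)\, \eta_{i' \leq i}(s')\, f_{i,s}^{j,t}.
\]
Hence the spanning family is partitioned into orbits of size one or two under the involution, with members of any orbit differing by at most a sign, and by Lemma \ref{L:f0} the zero elements arise exactly from the fixed-point orbits $(j = i',\ t = s')$ in the orthogonal case $\eps = 1$.

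The remaining work is a case analysis selecting one representative per nonzero orbit. When $j \notin \{i, i'\}$, the partner $(j', t', i', s')$ satisfies $(j', i') \neq (i, j)$ and the two members of the orbit lie in distinct $(i, j)$-blocks; the condition $i < j$ then picks exactly one representative per orbit. When $j \in \{i, i'\}$, the orbit is contained in the $(i, i')$-block, and the triangular condition on $s + t$ is designed to select one representative per orbit: the boundary $s + t = \lambda_i - 1$ parameterizes the fixed points of the involution, which are nonzero basis elements for $\eps = -1$ (and hence included via the weak inequality $\leq$) but vanish identically for $\eps = 1$ by Lemma \ref{L:f0} (and hence are excluded via the strict inequality $<$).

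Linear independence then follows from the observation that the leading $e_{i,s}^{j,t}$-component of each chosen $f_{i,s}^{j,t}$ picks out a distinct element of the $\gl(V)$-basis, so the chosen set is linearly independent already in $\gl(V)$; a cardinality check against $\dim \so_N = \binom{N}{2}$ or $\dim \sp_N = \binom{N+1}{2}$ confirms we have produced a basis. The main bookkeeping obstacle lies in the subcase $j = i$ with $i \neq i'$: here the orbit $\{(i, s, i, t),\, (i', t', i', s')\}$ has its two members in blocks with different outer index, and verifying that summing over $1 \leq i \leq n$ with $j \in \{i, i'\}$ and the $s + t$ constraint picks each such orbit exactly once requires a careful bijective accounting via $(s,t) \mapsto (t', s')$, which swaps the regions $s+t \leq \lambda_i - 1$ and $s+t \geq \lambda_i - 1$ around the boundary $s+t = \lambda_i - 1$.
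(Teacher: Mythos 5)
Your strategy --- treat $\{f_{i,s}^{j,t}\}$ as a spanning family partitioned into orbits of the involution $(i,s,j,t)\mapsto(j',t',i',s')$, discard the vanishing orbits via Lemma \ref{L:f0}, and select one representative per surviving orbit --- is exactly the paper's (its proof is the one-line remark that the claim follows from Lemma \ref{L:f0} and $f_{i,s}^{j,t}=\pm f_{j',t'}^{i',s'}$), and your handling of the blocks with $j\notin\{i,i'\}$ and with $j=i'$ is sound. The gap is precisely the subcase you flag and then defer: $j=i$ with $i\neq i'$. There the orbit of $(i,s,i,t)$ is $\{(i,s,i,t),(i',t',i',s')\}$ with $t'+s'=2(\lambda_i-1)-(s+t)$, so on the boundary $s+t=\lambda_i-1$ the stated rule selects \emph{both} members when the inequality is weak ($\eps=-1$) and \emph{neither} when it is strict ($\eps=1$). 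These boundary orbits are not fixed points of the involution (their two members lie in different diagonal blocks, so your assertion that the boundary "parameterizes the fixed points" fails here) and they are not killed by Lemma \ref{L:f0}. A sign computation gives $f_{i',t'}^{i',s'}=(-1)^{\lambda_i}f_{i,s}^{i,t}=\eps\, f_{i,s}^{i,t}$ on this boundary, so for $\eps=-1$ the proposed set contains a nonzero vector together with its negative, while for $\eps=1$ it omits a nonzero orbit entirely. Concretely, for $\sp_2$ with $\lambda=(1,1)$ (so $1'=2$) the second set contains both $f_{1,0}^{1,0}=E_{11}-E_{22}$ and $f_{2,0}^{2,0}=-f_{1,0}^{1,0}$; for $\so_4$ with $\lambda=(2,2)$ the two sets together contain at most four elements while $\dim\so_4=6$.

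Two consequences. First, your linear-independence argument ("the leading $e_{i,s}^{j,t}$-component of each chosen element is a distinct basis vector of $\gl(V)$") is insufficient as stated: the proportional elements $f_{i,s}^{i,t}$ and $f_{i',t'}^{i',s'}$ have distinct leading components, so distinctness of leading components does not preclude dependence; what you must show is that no two chosen quadruples lie in the same orbit, which is exactly the point at issue. Second, the bookkeeping you postpone cannot be completed for the statement as written; the selection in the $j=i\neq i'$ block must be repaired (for instance by keeping only the outer index $\min(i,i')$ with all $(s,t)$, or by breaking the tie on the boundary in some other fixed way). Note that you cannot dodge this by appealing to the paper's standing same-parity hypothesis: all parts odd with $\eps=-1$, or all parts even with $\eps=1$, forces $i\neq i'$ for every $i$, so the problematic subcase is unavoidable there.
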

\begin{proof}
 This follows from Lemma \ref{L:f0} and the fact that
 $f_{i,s}^{j,t} = \pm f_{j', t'}^{i', s'}$ for all $i,j,s,t$.
\end{proof}

\subsection{The Lax type operator using this realization}
In terms of our basis for $\gl(V)$, we have that
\[
 f = \sum_{i=1}^n \sum_{s=0}^{\lambda_i-2} e_{i,s}^{i, s+1}
\]
To define $U(\g,f)$, we define the symmetric nondegenerate equivariant form on $\g$ via
\[
  \langle X, Y \rangle  = \tfrac{1}{2} \operatorname{trace}(X, Y).
\]

Recall that $\bar 1 = 1+J \in U(\g)/J$ (see \eqref{L:walgdef}).
\begin{Lemma} \label{L:pr}
Let $f_{j, t}^{k, q} \in \g_{\geq 1}$.  Then
$f_{j,t}^{k,q} \bar 1 = \delta_{j,k} \delta_{t,q+1} \bar 1$.
\end{Lemma}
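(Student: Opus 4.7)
By the definition \eqref{L:walgdef} of $J$, for any $m \in \g_{\geq 1}$ we have $m - \langle f \mid m \rangle \in J$, so $m\,\bar 1 = \langle f \mid m \rangle\,\bar 1$ in $U(\g)/J$. The hypothesis $f_{j,t}^{k,q} \in \g_{\geq 1}$ lets me apply this with $m = f_{j,t}^{k,q}$, and the lemma reduces to the scalar identity
\[
\langle f, f_{j,t}^{k,q} \rangle = \tfrac12 \operatorname{trace}\bigl(f \cdot f_{j,t}^{k,q}\bigr) = \delta_{j,k}\,\delta_{t,q+1}.
\]

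I would next expand both factors using the formulas already established:
\[
f = \sum_{i=1}^n \sum_{s=0}^{\lambda_i-2} e_{i,s}^{i,s+1}, \qquad f_{j,t}^{k,q} = e_{j,t}^{k,q} - \eps\, \eta_{k \leq k'}(q)\, \eta_{j' \leq j}(t')\, e_{k',q'}^{j',t'},
\]
and exploit $\operatorname{trace}(e_{a,s}^{b,r}\, e_{c,u}^{d,v}) = \delta_{b,c}\,\delta_{r,u}\,\delta_{a,d}\,\delta_{s,v}$ on the basis $\{f^p w_l\}$ of $V$. A short matching shows that $\operatorname{trace}(f\cdot e_{j,t}^{k,q})$ is nonzero only when $j=k$ and $t=q+1$, in which case it equals $1$; while the twisted summand contributes only when $k'=j'$ (equivalently $k=j$) and $q'=t'+1$, which after applying $s'=\lambda_j-1-s$ again forces $t=q+1$. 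Both supports coincide, and the computation collapses to
\[
\operatorname{trace}\bigl(f \cdot f_{j,t}^{k,q}\bigr) = \delta_{j,k}\,\delta_{t,q+1}\,\bigl(1 - \eps\, \eta_{k \leq k'}(q)\, \eta_{j' \leq j}(t')\bigr).
\]

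It remains to show the parenthesized factor equals $2$ whenever $j=k$ and $t=q+1$, which I would handle by splitting into three cases on $j$ versus $j'$. When $j = j'$, both $\eta$'s take their ``$\leq$'' form, and the product simplifies to $(-1)^{q+t'} = (-1)^{\lambda_j - 2}$, so the expression becomes $1 - \eps(-1)^{\lambda_j}$; the third clause of the involution lemma (which says $i=i'$ iff $\eps(-1)^{\lambda_i} = -1$) then gives $2$. When $j < j'$, the second $\eta$ is of the ``$>$'' form, contributing an extra $\eps$ and $(-1)^{t}$ via $(t')' = t$, so after substituting $t=q+1$ the product collapses to $-\eps$ and the expression equals $1+\eps^2 = 2$. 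The case $j > j'$ is symmetric, with the first $\eta$ now carrying the extra $\eps$ and a parity count using $q'+t' \equiv 1 \pmod{2}$ again yielding $-\eps$; the identity $\eta_{i \leq i'}(s)\eta_{i' \leq i}(s') = \eps$ noted in the proof of Lemma \ref{L:rel1} organizes the bookkeeping. Dividing by $2$ yields $\langle f, f_{j,t}^{k,q}\rangle = \delta_{j,k}\delta_{t,q+1}$ and hence the claim. The only real obstacle is the sign tracking across the three cases; there is no deeper structural issue.
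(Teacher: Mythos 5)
Your proposal is correct and follows essentially the same route as the paper: since $f_{j,t}^{k,q}\in\g_{\geq 1}$, reduce to the scalar $\langle f, f_{j,t}^{k,q}\rangle=\tfrac12\operatorname{trace}(f\cdot f_{j,t}^{k,q})$ and evaluate the trace of $e_{j,t}^{k,q+1}-\eps\,\eta_{k\leq k'}(q)\,\eta_{j'\leq j}(t')\,e_{k',q'}^{j',t'+1}$. The paper simply asserts the final equality, whereas you explicitly verify that the sign factor equals $-1$ on the common support $j=k$, $t=q+1$ via the three cases on $j$ versus $j'$; your case analysis is correct and only makes explicit what the paper leaves implicit.
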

\begin{proof}
From the definition of $J$ we have that
\begin{align*}
f_{j,t}^{k,q} \bar 1 &= \langle f, f_{j,t}^{k,q} \rangle \bar 1 \\
&= \tfrac{1}{2} \operatorname{trace} \left( \left(
 \sum_{i=1}^n \sum_{s=0}^{\lambda_i-2} e_{i,s}^{i, s+1} \right)
 \left(e_{j,t}^{k,q}
 - \eps \eta_{k \leq k'}(q) \eta_{j' \leq j}(t')e_{k',q'}^{j',t'} \right) \right) \bar 1\\
 &=
 \tfrac{1}{2} \operatorname{trace} \left(e_{j, t}^{k, q+1}
 - \eps \eta_{k \leq k'}(q) \eta_{j' \leq j}(t')e_{k',q'}^{j',t'+1}
 \right) \bar 1 \\
 &= \delta_{j,k} \delta_{t,q+1} \bar 1.
\end{align*}
\end{proof}

Now $\{f^s w_i \mid 1 \leq i \leq n, 0 \leq s \leq \lambda_i-1\}$ is a basis of $V$ consistent with the setup from $\S$\ref{S:Lz}.
Recall that in $\S$\ref{S:Lz}
we created a pyramid from $(\lambda_1, \dots, \lambda_n)$, the Jordan type of $f$ with blocks labeled with $1, \dots, N$ where $N = \dim V$,
and that $V$ has a basis $\{e_a \mid a = 1, \dots, N$.
Furthermore for this basis $F e_a = e_b$ where
$\row(a) = \row(b)$ and $\col(b) = 1 + \col(a)$ if $\col(a) < \lambda_{\row(a)}$, and $F e_a = 0$ if $\col(a) = \lambda_{\row(a)}$.
Note
the leftmost block of the pyramid in row $i$ lies in column $(\lambda_1 - \lambda_i)/2+1$.
Thus we can identify the basis element $f^s w_i$ with $e_a$
where $\row(a) = i$ and $\col(a) = (\lambda_1-\lambda_i)/2 + 1 + s$.
We will also use the notation $\col(i,s) = (\lambda_1-\lambda_i)/2 + 1 + s$, the column of the basis element corresponding to $f^s w_i$.

So for $a,b = 1, \dots, N$ we identify
\begin{equation} \label{EQ:conv}
 e_{a,b} = e_{i,s}^{j,t}
\end{equation}
where
\begin{itemize}
 \item $i = \row(b)$,
 \item $s = \col(b) + \tfrac{\lambda_i-\lambda_1}{2}-1$,
 \item $j = \row(a)$,
 \item $t = \col(a) + \tfrac{\lambda_j-\lambda_1}{2}-1$.
\end{itemize}

If
$e_{a,b} = e_{i,s}^{j,t}$,
we define $a'$ and $b'$ via $e_{b',a'} = e_{j', t'}^{i', s'}$.
More explicitly we have
\begin{itemize}
 \item $\row(b') = i'$,
 \item $\col(b') = \lambda_1+1-\col(b)$,
 \item $\row(a') = j'$,
 \item $\col(a') = \lambda_1+1-\col(a)$,
\end{itemize}
Now we define
\[
 f_{a,b} = e_{a,b} - \eps \eta_{j \leq j'}(t) \eta_{i' \leq i}(s') e_{b',a'},
\]
so that $f_{a,b} = f_{i,s}^{j,t}$.

Now Lemma \ref{L:rel1} translates to the following:
\begin{Lemma} \label{L:rel2}
   Let $a,b,c,d = 1, \dots, N$.  Then
   \begin{align*}
   [f_{a,b},f_{c,d}] & = \delta_{b,c} f_{a,d} - \delta_{a,d} f_{c,b} \\
   & \quad -\delta_{b,d'} \eps \eta_{\row(c) \leq \row(c)'}(q) \eta_{\row(b) \leq \row(b)'}(s) f_{a,c'}  \\
   & \quad
   +\delta_{a,c'} \eps \eta_{\row(a)' \leq \row(a)}(t') \eta_{\row(d)' \leq \row(d)}(p') f_{d',b}
   \end{align*}
   where
\begin{itemize}
 \item $s = \col(b) + \tfrac{\lambda_{\row(b)}-\lambda_1}{2}-1$,
 \item $t = \col(a) + \tfrac{\lambda_{\row(a)}-\lambda_1}{2}-1$,
 \item $p = \col(d) + \tfrac{\lambda_{\row(d)}-\lambda_1}{2}-1$,
 \item $q = \col(c) + \tfrac{\lambda_{\row(c)}-\lambda_1}{2}-1$.
 \end{itemize}
\end{Lemma}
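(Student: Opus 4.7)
The plan is to derive Lemma \ref{L:rel2} purely as a notational translation of Lemma \ref{L:rel1} via the identification \eqref{EQ:conv}. Writing $f_{a,b} = f_{i,s}^{j,t}$ and $f_{c,d} = f_{k,p}^{l,q}$ with the column/row conventions from the excerpt (so $i = \row(b)$, $j = \row(a)$, $k = \row(d)$, $l = \row(c)$ and $s,t,p,q$ given by the displayed formulas), the four terms on the right-hand side of Lemma \ref{L:rel1} should be compared one by one with the four terms in Lemma \ref{L:rel2}.

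First I would translate the two straightforward terms. The factor $\delta_{i,l}\delta_{s,q}$ says $\row(b)=\row(c)$ and, after unwinding the shift $\col(b) - 1 + (\lambda_{\row(b)} - \lambda_1)/2$ on both sides, $\col(b) = \col(c)$, hence $\delta_{i,l}\delta_{s,q} = \delta_{b,c}$; the target $f_{k,p}^{j,t}$ has lower index $d$ and upper index $a$, so it equals $f_{a,d}$. The analogous bookkeeping gives $\delta_{k,j}\delta_{p,t}\,f_{i,s}^{l,q} = \delta_{a,d}\,f_{c,b}$.

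Next I would handle the two terms involving primes. Using that $a'$ (resp.\ $b'$) is defined to reflect the column position across the vertical axis of the pyramid while replacing the row by its involution image, together with $(i')' = i$, $\lambda_{i'} = \lambda_i$, and $s' = \lambda_i - 1 - s$, one verifies directly that $\delta_{i,k'}\delta_{s,p'} = \delta_{b,d'}$ and $\delta_{l',j}\delta_{q',t} = \delta_{a,c'}$; the output elements $f_{l',q'}^{j,t}$ and $f_{i,s}^{k',p'}$ translate to $f_{a,c'}$ and $f_{d',b}$ respectively. What remains is to confirm that the $\eta$-coefficients match. The factor $\eta_{l \leq l'}(q)$ in the third term is immediately $\eta_{\row(c) \leq \row(c)'}(q)$ and $\eta_{i \leq i'}(s)$ is $\eta_{\row(b) \leq \row(b)'}(s)$; and the factor $\eta_{k' \leq k}(p')$ in the fourth term is $\eta_{\row(d)' \leq \row(d)}(p')$ by definition.

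The main obstacle, and the only step that is not pure bookkeeping, is showing that the remaining factor $\eta_{l \leq l'}(q)$ in the fourth term from Lemma \ref{L:rel1} coincides with the prescribed factor $\eta_{\row(a)' \leq \row(a)}(t')$ of Lemma \ref{L:rel2}. Since this $\eta$ only matters when the delta $\delta_{a,c'}$ is active, I would impose $a = c'$, which forces $\row(a) = \row(c)'$ (hence $\row(a)' = \row(c)$ and $\lambda_{\row(a)} = \lambda_{\row(c)}$) and $\col(a) = \lambda_1 + 1 - \col(c)$. Plugging these into the definitions $t = \col(a) + (\lambda_{\row(a)} - \lambda_1)/2 - 1$ and $q = \col(c) + (\lambda_{\row(c)} - \lambda_1)/2 - 1$, a short calculation gives $t' = \lambda_{\row(a)} - 1 - t = q$, and hence $\eta_{\row(a)' \leq \row(a)}(t') = \eta_{\row(c) \leq \row(c)'}(q)$, as required. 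With this identification verified the lemma follows by substituting all of the above back into Lemma \ref{L:rel1}.
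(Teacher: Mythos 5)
Your proposal is correct and follows exactly the route the paper intends: the paper offers no written proof, simply asserting that Lemma \ref{L:rel1} ``translates to'' Lemma \ref{L:rel2} under the identification \eqref{EQ:conv}, and your term-by-term verification of the delta conditions, the primed indices, and the $\eta$-factors (in particular the observation that $t'=q$ when $a=c'$, so $\eta_{\row(a)'\le\row(a)}(t')=\eta_{l\le l'}(q)$) supplies precisely the bookkeeping that justifies that assertion.
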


We also have that Lemma \ref{L:basis} is equivalent to:
\begin{Lemma} \label{L:basis2}
   If $\eps = -1$, then
   \begin{align*}
     &\{ f_{a,b} \mid  a,b =1, \dots, N,
        \row(a) \notin \{ \row(b), \row(b)' \},
         \row(b) < \row(a)\} \\
         &\quad \cup
       \{ f_{a,b} \mid  a,b =1, \dots, N,
        \row(a) \in \{ \row(b), \row(b)' \}, \\
        & \qquad \qquad \quad
         \col(a) +\col(b) \geq \lambda_1+1 \}
    \end{align*}
    is a basis of $\g$.

   If $\eps = 1$, then
   \begin{align*}
     &\{ f_{a,b} \mid  a,b =1, \dots, N,
        \row(a) \notin \{ \row(b), \row(b)' \},
         \row(b) < \row(a)\} \\
         &\quad \cup
       \{ f_{a,b} \mid  a,b =1, \dots, N,
        \row(a) \in \{ \row(b), \row(b)' \}, \\
        & \qquad \qquad \quad
         \col(a) +\col(b) > \lambda_1+1 \}
    \end{align*}
    is a basis of $\g$.

\end{Lemma}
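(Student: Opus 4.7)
The plan is to derive Lemma \ref{L:basis2} directly from Lemma \ref{L:basis} by translating the indexing $(i,s,j,t)$ into the $(a,b)$ indexing via the identification \eqref{EQ:conv}. For each quadruple $(i,s,j,t)$ satisfying the conditions of Lemma \ref{L:basis}, there is a unique pair $(a,b)$ with $f_{a,b} = f_{i,s}^{j,t}$. Under this translation, the condition $j \notin \{i,i'\}$ becomes $\row(a) \notin \{\row(b), \row(b)'\}$, the condition $j \in \{i,i'\}$ becomes $\row(a) \in \{\row(b), \row(b)'\}$, and the ordering $i < j$ becomes $\row(b) < \row(a)$. The ranges $0 \leq s < \lambda_i$, $0 \leq t < \lambda_j$ are automatic since $a$ and $b$ are constrained to valid pyramid boxes.

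For the second case, where $\lambda_i = \lambda_j$, I compute
\[
s + t = \col(a) + \col(b) + \lambda_i - \lambda_1 - 2,
\]
so $s+t \leq \lambda_i - 1$ is equivalent to $\col(a) + \col(b) \leq \lambda_1 + 1$, and similarly for the strict version. To match this with the inequalities $\col(a) + \col(b) \geq \lambda_1 + 1$ stated in Lemma \ref{L:basis2}, I will use that $f_{a,b} = \pm f_{b',a'}$ (an immediate consequence of the definition $f_{a,b} = e_{a,b} + \sigma(e_{a,b})$). The involution $(a,b) \mapsto (b',a')$ sends $\col(a) + \col(b)$ to $2\lambda_1 + 2 - \col(a) - \col(b)$, so it flips the inequality while preserving the row condition $\row(a) \in \{\row(b), \row(b)'\}$. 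Replacing each element of the image of Lemma \ref{L:basis} by its partner under this involution therefore yields precisely the set described in Lemma \ref{L:basis2}, and spans the same subspace of $\g$.

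The one point that requires care is the boundary $\col(a) + \col(b) = \lambda_1 + 1$ with $\row(a) = \row(b)'$, where $(a,b) = (b',a')$ and the involution is trivial. These correspond under \eqref{EQ:conv} to quadruples with $(j,t) = (i',s')$, and by Lemma \ref{L:f0} the associated $f_{a,b}$ vanishes precisely when $\eps = 1$. This matches the non-strict inequality $\col(a) + \col(b) \geq \lambda_1 + 1$ in the $\eps = -1$ case of Lemma \ref{L:basis2} and the strict inequality in the $\eps = 1$ case. Once this boundary count is checked, the resulting set has the same cardinality as $\dim\g$ and coincides (up to signs) with the image of the basis from Lemma \ref{L:basis}, hence is itself a basis. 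I expect the only nontrivial verification is this parity-dependent boundary analysis; everything else is a direct translation of the conditions through \eqref{EQ:conv}.
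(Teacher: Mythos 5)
Your proposal is correct and follows the same route as the paper, which simply asserts that Lemma \ref{L:basis2} is the translation of Lemma \ref{L:basis} through the identification \eqref{EQ:conv}; you supply the details the paper omits, namely the computation $s+t=\col(a)+\col(b)+\lambda_i-\lambda_1-2$, the use of the involution $f_{a,b}=\pm f_{b',a'}$ to convert the resulting $\leq\lambda_1+1$ condition into the stated $\geq\lambda_1+1$ condition, and the boundary check via Lemma \ref{L:f0} that accounts for the strict versus non-strict inequality depending on $\eps$.
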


Furthermore Lemma \ref{L:pr} translates to:
\begin{Lemma} \label{L:pr2}
  Let $f_{a,b} \in \g_{\geq 1}$.  Then
  $f_{a,b} \bar 1$ is $\bar 1$ if $\row(a)$ = $\row(b)$
  and $\col(b) = \col(a)+1$, and is 0 otherwise.
\end{Lemma}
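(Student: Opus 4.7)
The plan is to read Lemma \ref{L:pr2} as the pyramid-coordinate rewriting of Lemma \ref{L:pr}, using the identification \eqref{EQ:conv} between the indices $(a,b)$ and the indices $(i,s;j,t)$. Specifically, I would first record that under \eqref{EQ:conv} the element $f_{a,b}$ equals $f_{i,s}^{j,t}$ with $i = \row(b)$, $s = \col(b) + (\lambda_i - \lambda_1)/2 - 1$, $j = \row(a)$, and $t = \col(a) + (\lambda_j - \lambda_1)/2 - 1$. Since $f_{a,b} \in \g_{\geq 1}$ by hypothesis, the same is true of $f_{i,s}^{j,t}$, so Lemma \ref{L:pr} applies and yields $f_{a,b} \bar 1 = \delta_{i,j}\,\delta_{s,t+1}\, \bar 1$.

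The remaining step is to translate the two Kronecker deltas back into pyramid coordinates. The first factor $\delta_{i,j}$ is simply $\row(a) = \row(b)$; assuming it, one has $\lambda_i = \lambda_j$, so the shifts $(\lambda_i - \lambda_1)/2 - 1$ and $(\lambda_j - \lambda_1)/2 - 1$ appearing in the definitions of $s$ and $t$ cancel, and the condition $s = t+1$ collapses to $\col(b) = \col(a) + 1$. Thus $f_{a,b} \bar 1 = \bar 1$ precisely when both $\row(a) = \row(b)$ and $\col(b) = \col(a)+1$, and vanishes otherwise, which is the claim. There is no genuine obstacle here; the whole content is unpacking the coordinate dictionary set up just before the lemma, and the only point requiring care is to correctly match the source/target roles of $(i,s)$ and $(j,t)$ against those of $(a,b)$.
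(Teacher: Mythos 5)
Your proposal is correct and is exactly the argument the paper intends: the paper offers no separate proof, merely asserting that Lemma \ref{L:pr} ``translates to'' Lemma \ref{L:pr2} via the dictionary \eqref{EQ:conv}, which is precisely the unpacking you carry out (including the correct matching of source/target indices and the cancellation of the shifts once $\lambda_{\row(a)}=\lambda_{\row(b)}$).
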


\subsection{Adapting the formulas for $L_k(z)$ and $L_{k;R}(z)$  to the basis from Lemma \ref{L:basis2}}
Next we need to express the matrix $\bar Y$ from \eqref{EQ:Yz1} in
terms of our basis from Lemma \ref{L:basis2}.
Note that
our assumption
that all the parts of the Jordan type of $f$ have the same parity makes it so there are no non-zero half-integer weight elements of $\g$.
So the basis elements from \eqref{EQ:Yz1} for which $\col(a) \geq \col(b)$ form a basis for $\g_{\leq 0}$.

Note that the dual basis element of
$f_{i,s}^{j,t}$ is $f_{j,t}^{i,s}$ if $j \neq i'$ or $j= i'$
and $t \neq s'$,
and in the case $\eps = -1$ the dual basis element to
$f_{i,s}^{i',s'}$ is $e_{i,s}^{i', s'}$.
So the dual basis element to
$f_{a,b}$ is $f_{b,a}$ if $\row(a) \neq \row(b)'$ or $\row(a) = \row(b')$
and $\col(a) \neq \lambda_1 + 1 - \col(b)$,
and in the case $\eps = -1$ the dual basis element to $f_{a,b}$ where $\row(a) = \row(b)'$ and $\col(a)  = \lambda_1 + 1 - \col(b)$ is $e_{b,a}$.

So if $u_i = f_{a,b}$, then
\[
   u_i \otimes U^i =
   \begin{cases}
      f_{a,b} \otimes E_{b,a} & \text { if } \eps = -1,
                     \row(a) = \row(b)', \\
      & \quad \text{ and }
       \col(a) + \col(b) = \lambda_1+1; \\
       f_{a,b} \otimes F_{b,a} & \text{otherwise}.
   \end{cases}
\]
Furthermore in the latter case if
 $s = \col(b) + \tfrac{\lambda_{\row(b)}-\lambda_1}{2}-1$
 and
  $t = \col(a) + \tfrac{\lambda_{\row(a)}-\lambda_1}{2}-1$,
  then
  \begin{align*}
  f_{a,b} \otimes F_{b,a} &=
  f_{a,b} \otimes (E_{b,a} - \eps \eta_{\row(a) \leq \row(a)'}(t)
     \eta_{\row(b)' \leq \row(b)}(s') E_{a',b'}) \\
     &= f_{a,b} \otimes E_{b,a} + f_{b',a'} \otimes E_{a',b'}.
     \end{align*}

So whenever $\col(a) \leq \col(b)$, then $\bar Y_{a,b} = f_{b,a} z^{\col(a)-\col(b)-1}$.
Thus
we can replace each $\bar Y_{d_i, a_i}$ with $f_{a_i, d_i} z^{\col(d_i)-\col(a_i)-1}$
in Theorems \ref{T:LzRexplicit} and \ref{T:Lzexplicit}.

Recall that if $k$ is a non-negative integer, then
\[
 \g^k = \left\{ \Pi_{[-k,k]} X \Psi_{[-k,k]} \mid X \in \g \right\}.
\]
Also recall the definitions of $s_k$ and $e_k$ from \eqref{EQ:sk}.

\begin{Lemma} \label{L:Pik}
Let $k$ be any non-negative integer.  Then
  \[
\Pi_{[-k,k]} f_{i,s}^{j,t} \Psi_{[-k,k]} \neq 0
  \]
if and only if
  \[
  s_k \leq \col(i,s),\col(j,t) \leq e_k,
  \]
  in which case
  $\Pi_{[-k,k]} f_{i,s}^{j,t} \Psi_{[-k,k]} = f_{i,s}^{j,t}$.

  Equivalently,
  \[
\Pi_{[-k,k]} f_{a,b} \Psi_{[-k,k]} \neq 0
  \]
if and only if
  \[
     s_k \leq \col(a),\col(b) \leq e_k,
  \]
  in which case
  $\Pi_{[-k,k]} f_{a,b} \Psi_{[-k,k]} = f_{a,b}$.

\end{Lemma}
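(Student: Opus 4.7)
The plan is to write $f_{i,s}^{j,t}$ as a sum of two matrix units and analyze each summand against the $x$-weight decomposition of $V$. Recall from \eqref{EQ:delta} that the basis vector $e_a$ has $x$-weight $(\lambda_1+1)/2 - \col(a)$, so $e_a \in V[-k,k]$ if and only if $s_k \leq \col(a) \leq e_k$, using the formulas for $s_k, e_k$ in \eqref{EQ:sk}. The crucial arithmetic identity driving the proof is the symmetry $s_k + e_k = \lambda_1 + 1$.

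First I would split
\[
f_{i,s}^{j,t} = e_{i,s}^{j,t} - \eps \,\eta_{j \leq j'}(t)\,\eta_{i' \leq i}(s')\, e_{j',t'}^{i',s'}.
\]
The operator $e_{i,s}^{j,t}$ is a single matrix unit that sends $f^s w_i$ to $f^t w_j$ and kills every other basis vector. Identifying $f^s w_i$ with $e_b$ (so $\col(b) = \col(i,s)$) and $f^t w_j$ with $e_a$ (so $\col(a) = \col(j,t)$), $\Pi_{[-k,k]} e_{i,s}^{j,t} \Psi_{[-k,k]}$ is nonzero if and only if both $e_a$ and $e_b$ lie in $V[-k,k]$, which by the first paragraph is exactly the condition $s_k \leq \col(i,s),\col(j,t) \leq e_k$; in that case it equals $e_{i,s}^{j,t}$ itself.

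Second I would apply the same analysis to the companion summand $e_{j',t'}^{i',s'}$. Using $s_i' = \lambda_i - 1 - s$ and $\lambda_{i'} = \lambda_i$, one checks directly that $\col(i',s') = \lambda_1 + 1 - \col(i,s)$ and $\col(j',t') = \lambda_1 + 1 - \col(j,t)$. Combining this reflection with the symmetry $s_k + e_k = \lambda_1 + 1$, the inequalities $s_k \leq \col(j',t'),\col(i',s') \leq e_k$ reduce to exactly the same condition $s_k \leq \col(i,s),\col(j,t) \leq e_k$. Thus the two summands of $f_{i,s}^{j,t}$ are either both preserved or both annihilated by $\Pi_{[-k,k]}(\cdot)\Psi_{[-k,k]}$, and when preserved the result is $f_{i,s}^{j,t}$.

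This gives the first form of the lemma, and the equivalent statement in the $(a,b)$-notation follows immediately from the identification in \eqref{EQ:conv}, under which $\col(i,s) = \col(b)$ and $\col(j,t) = \col(a)$. I do not foresee a real obstacle: the whole argument is bookkeeping around the reflection $\col\mapsto \lambda_1 + 1 - \col$, and the only point that requires care is making sure the primed-index symmetry for $\col(i',s')$ aligns with the $s_k/e_k$ symmetry, which it does because both are governed by $\lambda_1 + 1$.
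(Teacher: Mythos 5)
Your proof is correct and follows essentially the same route as the paper: reduce to the single matrix unit $e_{i,s}^{j,t}$ via the observation that $s_k \leq \col(i,s) \leq e_k$ iff $s_k \leq \col(i',s') \leq e_k$ (which you make explicit through the reflection $\col(i',s') = \lambda_1+1-\col(i,s)$ and $s_k+e_k=\lambda_1+1$), then translate the weight condition $-k \leq (\lambda_1+1)/2-\col(a) \leq k$ into the column inequalities. No gaps.
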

\begin{proof}
   A quick calculation shows that
   \[
  s_k \leq \col(i,s) \leq e_k
  \]
  if and only if
  \[
  s_k \leq \col(i',s') \leq e_k.
  \]
  So we just need to prove that
  $\Pi_{[-k,k]} e_{i,s}^{j,t} \Psi_{[-k,k]} \neq 0$ if and only if
  \[
  s_k \leq \col(i,s),\col(j,t) \leq e_k.
  \]
  To prove this, note that
  in general the $x$-weight for the basis vector $e_a \in V$ is
  $(\lambda_1 -1)/2 -\col(a)+1$.
  So
  $\Pi_{[-k,k]} e_{i,s}^{j,t} \Psi_{[-k,k]} \neq 0$ if and only if
  \[
  -k \leq (\lambda_1 -1)/2 -\col(i,s)+1 \leq k
  \]
  and
  \[
  -k \leq (\lambda_1 -1)/2 -\col(j,t)+1 \leq k,
  \]
  which is equivalent to
  \[
  s_k = (\lambda_1+1)/2-k \leq \col(i,s),\col(j,t) \leq (\lambda_1 + 1)/2+k = e_k.
  \]
  Is is also now clear that when these conditions hold
  $\Pi_{[-k,k]} e_{i,s}^{j,t} \Psi_{[-k,k]} = e_{i,s}^{j,t}$.
\end{proof}

\subsection{Proofs of Theorems \ref{T:Lk} and \ref{T:LRk}}
First we restate Theorem \ref{T:Lk} in terms of
of the coordinates in this section.

\begin{Theorem} \label{T:Lkv2}
 Let $k$ be the weight of a highest weight vector in $V$ for the $\mathfrak{sl}_2$-triple $(f,2x,e)$.
 So $\lambda_i = 2k+1$ for some $i$.
 Let $c,d \in \{1, \dots, N\}$ such that
 $\lambda_{\row(c)} = \lambda_{\row(d)} = \lambda_i, \col(c) = s_k$, and $\col(d) = e_k$.
 Then all the coefficients of $L_k(z)_{c,d}$ are in $U(\g,f)$.
\end{Theorem}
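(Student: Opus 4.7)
The plan is to derive Theorem \ref{T:Lkv2} from the original SKV result \cite[Theorem 4.9]{SKV} applied to the smaller pair $(\g^k, f_k)$. By that result, the coefficients of $L_k(z) \bar 1_k$ already lie in $U(\g^k, f_k)$, where $\bar 1_k = 1 + J_k$ and $J_k$ is generated by $\{n - \langle f_k \mid n \rangle_k : n \in \g^k_{\geq 1}\}$ inside $U(\g^k)$. My task is to promote $J_k$-invariance under $\g^k_{\geq 1/2}$ to $J$-invariance under all of $\g_{\geq 1/2}$.

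The first step is to verify that the two ideals agree on $\g^k_{\geq 1}$: for every $n \in \g^k_{\geq 1}$, one has $\langle f \mid n \rangle = \langle f_k \mid n \rangle_k$. By Lemma \ref{L:pr2} this is a direct trace computation, using that elements of $\g^k$ are supported on $V[-k,k]$ (Lemma \ref{L:Pik}) and $f_k = \Pi_{[-k,k]} f \Psi_{[-k,k]}$. It follows that $J_k \subset J$ under the inclusion $U(\g^k) \hookrightarrow U(\g)$, so the coefficients of $L_k(z)_{c,d} \bar 1$ automatically commute modulo $J$ with every element of $\g^k_{\geq 1/2}$.

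The main work is then to show $[m, L_k(z)_{c,d}] \bar 1 = 0$ in $U(\g)/J$ for every basis element $m = f_{p,q} \in \g_{\geq 1/2}$ having $\col(q) > \col(p)$ and at least one of $\col(p), \col(q)$ outside $[s_k, e_k]$. Using the explicit polynomial expression for $L_k(z)_{c,d}$ from Theorem \ref{T:Lzexplicit} (applied to the pyramid of $\g^k$) and expanding the commutator via the Leibniz rule together with Lemma \ref{L:rel2}, each term $[f_{p,q}, \bar Y_{d_i, a_i}]$ is a linear combination of four Kronecker-delta contributions that must each be shown to vanish or to lie in $J$. The crucial structural input is an $L_k$-analogue of Lemma \ref{L:aanotappear}: because $\col(c) = s_k$, $\col(d) = e_k$, and $\lambda_{\row(c)} = \lambda_{\row(d)} = 2k+1$ is the maximal row length in the $\g^k$-pyramid, the row indices $\row(a_i), \row(d_i)$ of every monomial appearing in $L_k(z)_{c,d}$ are confined to rows of length exactly $2k+1$. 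This confinement prevents the Kronecker deltas from producing any $f$-symbol whose contribution could escape $J$.

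The main obstacle will be the combinatorial bookkeeping in this last step: with four Kronecker-delta terms in Lemma \ref{L:rel2} per factor per monomial, each weighted by signs from $\eta_{i \leq i'}(\cdot)$ and by powers of $z^{-1/2}$, one must show that all surviving contributions either vanish individually or cancel modulo $J$. To isolate this bookkeeping, I would first prove a clean preliminary lemma --- the $L_k$-analogue of Lemma \ref{L:aanotappear} just mentioned --- restricting the admissible index patterns $(a_i, d_i)$ in the expansion of $L_k(z)_{c,d}$ for the chosen $c, d$, and then invoke it in a case analysis on $(\col(p), \col(q))$ relative to $[s_k, e_k]$.
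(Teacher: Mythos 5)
Your overall strategy matches the paper's: invoke \cite[Theorem 4.9]{SKV} for the pair $(\g^k,f_k)$ to dispose of elements of $\g^k_{\geq 1/2}$, and then handle by hand the elements of $\g_{\geq 1/2}$ that cross the boundary of $[s_k,e_k]$, using the explicit expansion, Lemma \ref{L:rel2}, and an analogue of Lemma \ref{L:aanotappear}. (Your preliminary check that the ideal $J_k$ sits inside $J$ is a point the paper leaves implicit, and it is worth making.) However, your ``crucial structural input'' is false as stated. It is not true that the row indices $\row(a_i),\row(d_i)$ of every monomial appearing in $L_k(z)_{c,d}$ are confined to rows of length $2k+1$: already for $\lambda=(3,1)$ with $k=1$ and $c,d$ in the long row, the expansion in Theorem \ref{T:Lzexplicit} contains admissible monomials $\bar Y_{d_1,a_1}\bar Y_{d_2,a_2}$ with $\row(a_1)=\row(d_2)$ equal to the short row. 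What Lemma \ref{L:aanotappear} actually gives is much weaker and column-local: only a factor $\bar Y_{d_i,a_i}$ whose index sits in the \emph{first} column $s_k$ of the sub-pyramid is forced to have row $\row(c)$, and only a factor whose index sits in the \emph{last} column $e_k$ is forced to have row $\row(d)$. The proof goes through precisely because the Kronecker deltas $\delta_{a,y}$ and $\delta_{a,x'}$ of Lemma \ref{L:rel2} can only fire when $\col(x)=e_k$ or $\col(x)=s_k$ respectively (given $\col(a)=e_k$, $\col(b)=e_k+1$), so the weak column-local statement is exactly enough to conclude $\row(x)=\row(d)\neq\row(b)$ (resp.\ $\row(y')\neq\row(b)$) and hence $f_{x,b}\bar 1=0$ (resp.\ $f_{y',b}\bar 1=0$) via Lemma \ref{L:pr2}. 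If you set out to prove the lemma you describe, you will not be able to, and the case analysis you plan to build on it collapses.

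Two further points you should not dismiss as bookkeeping. First, reduce to $m\in\g_1$ (possible since $\g_{1/2}=0$ under the equal-parity hypothesis and $\g_{\geq 1}$ is generated by $\g_1$); this is what pins the deltas to the extreme columns. Second, the offending factor $f_{x,b}$ produced by the commutator sits in the \emph{middle} of a monomial $u\,f_{x,y}\,v$, and $f_{x,b}\bar 1=0$ only after $f_{x,b}$ has been commuted past $v$; the commutators generated en route produce new terms, and one must observe that these new terms satisfy the same hypotheses as $f_{a,b}$, so the argument closes up recursively. This recursion is a genuine step of the paper's proof, not a routine expansion.
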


Next we restate Theorem \ref{T:LRk} in terms of the coordinates in this section.
\begin{Theorem} \label{T:LRkv2}
 Let $k$ be the weight of a highest weight vector in $V$ for the $\mathfrak{sl}_2$-triple $(f,2x,e)$, and let $l$
 be the largest weight of a highest weight vector in $V$
 which is less than $k$.  So there exists $i,j$ such that
 $\lambda_i = 2k+1$ and $\lambda_j = 2l+1$.
 Let $c,d \in \{1, \dots, N\}$ such that
 $\lambda_{\row(c)} = \lambda_j$, $\lambda_{\row(d)} = \lambda_i, \col(c) = e_l$, and $\col(d) = e_k$.
 Let $q \in \Z$ such that $q > k-l$.
 Then the coefficient of $z^{-q}$ in $L_{k;R}(z)_{c,d}$ is in
 $U(\g,f)$.
\end{Theorem}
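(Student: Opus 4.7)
My plan is to mirror the proof of Theorem \ref{T:main}, applied to the Lie subalgebra $\g^k$ and its nilpotent $f_k$. Under the standing parity hypothesis on the Jordan type, all $\ad x$-eigenvalues on $\g$ are integers, so $\g_{1/2}=0$ and $\g_{\geq 1/2}=\g_{\geq 1}$. Under the pyramid identification of \S\ref{S:Lz}, the vectors $c$ and $d$ are precisely the lowest weight vectors in $V$ of weights $-l$ and $-k$ respectively, both lying in the $\g^k$-module $V^k=V[[-k,k]]$. The goal is to show that for every $m\in\g_{\geq 1}$, $[m, L_{k;R}(z)_{c,d}]\bar 1$ is a polynomial in $z^{-1/2}$ of degree at most $k-l$, so that the coefficient of $z^{-q}$ for $q>k-l$ lies in $U(\g,f)$.

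For $m\in\g^k_{\geq 1}$, I would first establish the $\g^k$-analog of Lemma \ref{L:help},
\[
[m, L_{k;R}(z)]\bar 1 = z^{-\delta(m)}\bigl(\Pi_{>-k}\phi(m)\Psi_{-k}-\Pi_{>-k}\phi(m)\Psi_{>-k}\cdot L_{k;R}(z)\bigr)\bar 1,
\]
whose proof is identical to that of Lemma \ref{L:help} with $Z$ replaced by its $\g^k$-counterpart and $\pm\xi/2$ by $\pm k$. Decomposing $\phi(m)$ into matrix units $E_{a,b}$ with $\col(a)<\col(b)$ in $[s_k,e_k]$ and noting that only those with $a=c$ contribute to the $(c,d)$-entry, the problem reduces to bounding the degree of $(L_{k;R}(z)\bar 1)_{b,d}$ for each such $b$. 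Theorem \ref{T:LzRexplicit}, applied to the sub-pyramid for $\g^k$, expresses this entry as a sum over monomials $\bar Y_{d_1,a_1}\cdots\bar Y_{d_s,a_s}$ satisfying $\col(a_1)>\col(b)>\col(c)=e_l$; since $l$ is the largest highest weight of $V$ strictly below $k$, every intermediate index is forced into a row of length $2k+1$ (spanning $[s_k,e_k]$). The column constraints then telescope exactly as at the end of the proof of Theorem \ref{T:main}, bounding the total $z^{-1/2}$-degree by $k-l$.

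The second case, $m\in\g_{\geq 1}\setminus\g^k$, is where the main obstacle lies. Writing $m=f_{a,b}$ and using the duality $f_{a,b}=\pm f_{b',a'}$, I may assume $\col(a)<s_k$. The key observation is that $\Pi_{V^k}\phi(m)\Psi_{V^k}=0$, since both summands of $\phi(m)$ connect $V^k$ to its complement. In the Leibniz expansion of $[m, L_{k;R}(z)_{c,d}]$, each inner commutator $[f_{a,b}, f_{a_i,d_i}]$ from Lemma \ref{L:rel2} produces basis elements $f_{a,d_i}$ or $f_{a,a_i'}$ whose first index has column $<s_k$, forcing them into $\g_{\geq 1}$; Lemma \ref{L:pr2} then reduces their action on $\bar 1$ to a scalar. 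The delicate step is verifying that after such reductions, and after the additional commutators picked up when moving these new $\g_{\geq 1}$-elements past the remaining factors, the residual terms cancel across the dual symmetry $(a,b)\leftrightarrow(b',a')$ and across the sum over monomials in the explicit formula of Theorem \ref{T:LzRexplicit}. Once this cancellation is verified, the two cases combine to show the $z^{-q}$-coefficient commutes with all of $\g_{\geq 1}$ modulo $J$, placing it in $U(\g,f)$.
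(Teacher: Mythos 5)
Your overall architecture matches the paper's: the paper proves Theorem \ref{T:Lkv2} and observes the identical argument gives Theorem \ref{T:LRkv2}, reducing in both cases to showing $[f_{a,b},L_{k;R}(z)]_{c,d}\bar 1=0$ (in the relevant degrees) for $f_{a,b}\in\g_1$, split according to whether $f_{a,b}$ lies in $\g^k$, commutes with all of $\g^k$, or straddles the boundary of the subpyramid. Your first case ($m\in\g^k_{\geq 1}$, handled by Theorem \ref{T:main} applied to $(\g^k,f_k)$) is fine and is exactly what the paper does.

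The boundary case, however, is a genuine gap: you explicitly defer the decisive step (``Once this cancellation is verified\dots''), and the mechanism you propose is not the one that works. No cancellation across the sum over monomials, nor across the symmetry $(a,b)\leftrightarrow(b',a')$, is needed: each residual term vanishes individually. Concretely, with $\col(a)=s_k-1$, $\col(b)=s_k$ and a factor $f_{a_i,d_i}$ of a monomial (so $s_k<\col(d_i)\leq\col(a_i)\leq e_k$), three of the four delta terms in Lemma \ref{L:rel2} are vacuous for column reasons; the only survivor is the $\delta_{b,d_i'}$ term, which forces $\col(a_i)=\col(d_i)=e_k$ and produces $\pm f_{a,a_i'}$ with $\col(a_i')=s_k=\col(a)+1$. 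Lemma \ref{L:aanotappear2} then forces $\row(a_i)=\row(d)$, so $\row(a_i')=\row(d)'$ lies in a row of length $2k+1$, whereas $\lambda_{\row(a)}>2k+1$ because row $\row(a)$ reaches column $s_k-1$; hence $\row(a)\neq\row(a_i')$ and $f_{a,a_i'}\bar 1=0$ by Lemma \ref{L:pr2}. The remaining issue --- that $f_{a,a_i'}$ must first be commuted past the factors to its right before Lemma \ref{L:pr2} applies --- is resolved by the paper's induction: the newly created elements satisfy the same hypotheses as $f_{a,b}$, so every term generated while pushing them rightward is again of this type and dies at $\bar 1$. Without this term-by-term argument (or an actual verification of your proposed cancellation, which you have not supplied), the proof is incomplete precisely at its hardest point.
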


\begin{proof}
 We will prove Theorem \ref{T:Lkv2}, however essentially the same argument works to prove Theorem \ref{T:LRkv2}.
 Note that $\g_{>1/2}$ is generated $\g_1$ (since $\g_{1/2} = 0$),
 and that $\g_1$ is spanned by $\{f_{a,b} \mid \col(b) = \col(a)+1, \col(a) \geq (\lambda_1+1)/2 \}$.
 Recall the definition of $f_k$ from \eqref{EQ:fk}.
 So if $(\mu_1 \geq \mu_2 \geq \dots \geq \mu_n)$ is the Jordan type of $f_k$, then $\lambda_i = \mu_1$

 We need to prove that
 $[ f_{a,b}, L_k(z) ]_{c,d} \bar 1 = 0$
 for all $a,b= 1, \dots, N$
 such that
 $\col(b) = \col(a)+1$.
 If $\col(a) > e_k$, then $f_{a,b}$ commutes with every element of $\g^k$, so it commutes with $L_k(z)$.
 If $s_k \leq \col(a),\col(b) \leq e_k$, then
 $[ f_{a,b}, L_k(z) ]_{c,d} \bar 1 = 0$
 by \cite[Theorem 4.9]{SKV} (if proving Theorem \ref{T:LRk}, then
 the coefficients of $z^{-p}$ for $p > (\lambda_i-\lambda_j)/2$ of $[ f_{a,b}, L_{k;R}(z) ]_{c,d}$ are 0  by Theorem \ref{T:main}).

 So the only case we have left to consider is when
 $\col(a) = e_k$ and $\col(b) = e_k+1$,
 which implies that
 $\lambda_{\row(b)} > \lambda_i$,

 Suppose that $f_{x,y}$ is part of a monomial $u f_{x,y} v$ which occurs as part of a
 sum of monomials in a coefficient of some power of $z^{-1}$ in $L_k(z)_{c,d}$.
 This means that $\col(y) \leq \col(x) \leq e_k $ since $f_{x,y} \in \g^k_{\leq 0}$.
 So by Lemma \ref{L:rel2} we have that
 $[f_{a,b}, f_{x,y}] =
 - \delta_{a,y} f_{x,b} \pm \delta_{a,x'} f_{y',b}$.

 If $a = y$,
 then
$\col(x) = e_k$
 since
 $\col(a) = e_k$ and
$\col(y) \leq \col(x) \leq e_k$.
Also by Lemma \ref{L:aanotappear} (or Lemma \ref{L:aanotappear2} if proving Theorem \ref{T:LRk}) we have that $\row(d) = \row(x)$, so $\row(x) \neq \row(b)$
since $\lambda_{\row(d)} = \lambda_i$ and $\lambda_{\row(b)} > \lambda_i$.
So by Lemma \ref{L:pr2}, $f_{x,b} \bar 1 = 0$.

Similarly if $a = x'$, then $\col(x) = s_k$ so $\col(y) = s_k$,
$\row(y') \neq \row(b)$, and $f_{y', b} \bar 1 = 0$.

Note that
\[
[f_{a,b}, u f_{x,y} v] \bar 1 = ([f_{a,b} u] f_{x,y} v +
u(-\delta_{a,y} f_{x,b} \pm \delta_{a,x'} f_{y',b})v
+ u f_{x,y} [f_{a,b}, v]) \bar 1.
\]
Now if $a=y$ then $f_{x,b}$ will need to be commuted to the right of $v$,
and once it is to the right of $v$ it will become $0$.
In the same vein, if $a=x'$, then once $f_{y', b}$ is commuted to the right of $v$ it will become $0$.

Since $f_{x,b}$ if $a = y$, and $f_{y',b}$ if $a = x'$ satisfy
the same hypotheses as $f_{a,b}$, any terms created by moving them
to the right of $v$ will also need to be moved to the right of $v$, after which they become 0.  Thus
$u \delta_{a,y} f_{x,b} v \bar 1 = 0$  and
$u \delta_{a,x'} f_{y', b} v \bar 1 = 0$.
This argument can now be repeated to show
that $[f_{a,b}, u] f_{x,y} v \bar 1$ and $u f_{x,y} [f_{a,b} v \bar 1$ are both $0$.
Therefore  $[ f_{a,b}, L_k(z) ]_{c,d} \bar 1= 0$

\end{proof}

\section{Proof of Theorem \ref{T:generate}} \label{S:proof}

There is a filtration on $U(\g,f)$
defined by declaring that if $X \in \g_{k}$ for $k \leq 1/2$, then $\deg(X) = -k$.
Then if $x = \sum_{i=1}^p X_{i_1} X_{i_2} \dots X_{i_k} \bar 1\in U(\g,f)$ where
$X_{i,j}  \in \g_{\leq 1/2}$ for all $i,j$, we define
\[
\deg(x) = \max_{i \in \{1, \dots, p\}} \left(\sum \deg(X_{i_1}) + \dots + \deg(X_{i_k}) \right).
\]

Under this filtration, the associated graded algebra
$\operatorname{gr}(U(\g,f)) \cong U(\g^f)$, where $\g^f$ is the centralizer of $f$ (see \cite[Theorem 3.8]{BGK}).
From \eqref{EQ:delta} we have that the degree of $f_{a,b}$ is $\col(b) - \col(a)$.

\begin{Lemma} \label{L:gr}
 Let $x$ be the coefficient of $z^{-p}$ in $L_R(z)_{c,d}$ from Theorem \ref{T:LzRexplicit}.  Then
 \[
   \operatorname{gr}(x) =
-\sum (-1)^{\col(c,a_1,d_1)+n(c)+\lambda_1} f_{a_1, d_1}
\]
where we are summing over all
$a_1, d_1 \in \{1, \dots, N\}$ such that
\begin{itemize}
 \item $\row(d_1) = \row(c)$,
 \item $\col(d_1) > 1$,
  \item $\row(a_{1}) = \row(d)$,
  \item $\col(d_1) \leq \col(a_1)+1/2$,
  \item $\col(a_1)-\col(d_1) +1 = p$.
  \item $\col(c) < \col(d_1)$ if $\lambda_{\row(c)} = \lambda_1$,
  \item $\col(c) \geq \col(d_1)$ if $\lambda_{\row(c)} < \lambda_1$,
\end{itemize}

Also let $y$ be the coefficient of $z^{-p}$ in $L(z)_{c,d}$ from Theorem \ref{T:Lzexplicit}.  Then
 \[
   \operatorname{gr}(y) =
-\sum (-1)^{\col(a_1,d_1)+\lambda_1}
f_{a_1,d_1}
\]
where we are summing over all
$a_1, d_1 \in \{1, \dots, N\}$ such that
\begin{itemize}
 \item $\row(d_1) = \row(c)$,
  \item $\row(a_{1}) = \row(d)$,
  \item $\col(d_1) \leq \col(a_1)$,
  \item $\col(a_1)-\col(d_1) +1 = p$.
\end{itemize}
\end{Lemma}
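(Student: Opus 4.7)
The plan is to extract the top Kazhdan-degree part of the $z^{-p}$ coefficient directly from the combinatorial sums in Theorems~\ref{T:LzRexplicit} and~\ref{T:Lzexplicit}. Those theorems present each coefficient as a signed sum over monomials $\bar Y_{d_1,a_1}\cdots\bar Y_{d_s,a_s}$ indexed by $s\geq 1$ and by tuples $(a_i,d_i)$ subject to explicit row/column constraints, so the task reduces to identifying which monomials survive the passage to $\operatorname{gr}U(\g,f)\cong U(\g^f)$.

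First I would substitute the factorization $\bar Y_{d_i,a_i}=f_{a_i,d_i}\,z^{\col(d_i)-\col(a_i)-1}$ (established in the discussion preceding Lemma~\ref{L:Pik}) into each monomial. Matching $z$-powers forces $\sum_i(\col(a_i)-\col(d_i))+s=p$, and what remains in $U(\g)$ is the non-commutative product $f_{a_1,d_1}f_{a_2,d_2}\cdots f_{a_s,d_s}$. Since the Kazhdan degree of $f_{a_i,d_i}$ equals $\col(a_i)-\col(d_i)$ (the condition $\col(d_i)\leq\col(a_i)+1/2$ from the theorems guarantees $f_{a_i,d_i}\in\g_{\leq 1/2}$, where the $-\delta$ formula for the degree applies), the total filtration degree of the monomial is $p-s$. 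Hence only $s=1$ monomials attain the maximal degree $p-1$; every monomial with $s\geq 2$ sits in strictly lower filtration degree.

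Consequently $\operatorname{gr}(x)$ and $\operatorname{gr}(y)$ are obtained by keeping only the $s=1$ summands of Theorems~\ref{T:LzRexplicit} and~\ref{T:Lzexplicit}. Substituting $s=1$ immediately trivializes the chaining constraints $\row(d_{i+1})=\row(a_i)$ and the alternation between $\col(a_i)<\col(d_{i+1})$ and $\col(a_i)\geq\col(d_{i+1})$, and it eliminates the restriction $\col(a_i)<\lambda_1$ for $i<s$. The requirement $\col(d_i)>1$ survives for $L_R$ because Theorem~\ref{T:LzRexplicit} imposed it for all $i$, but it disappears for $L$ because in Theorem~\ref{T:Lzexplicit} it was required only for $i>1$. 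The remaining row/column conditions together with the signs $(-1)^{\col(c,a_1,d_1)+n(c)+\lambda_1}$ and $(-1)^{\col(a_1,d_1)+\lambda_1}$ reproduce the two displays in Lemma~\ref{L:gr} verbatim.

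The only subtlety to confirm is that commutators incurred when viewing the non-commutative product $f_{a_1,d_1}\cdots f_{a_s,d_s}$ as its commutative image in $U(\g^f)$ do not lift any $s\geq 2$ monomial into filtration degree $p-1$. This is automatic from $[F_n,F_m]\subseteq F_{n+m-1}$, which holds because $\operatorname{gr}U(\g,f)$ is a commutative algebra: any reordering of factors modifies the monomial only by terms of strictly smaller Kazhdan degree, and these are annihilated by the projection onto $\operatorname{gr}_{p-1}$.
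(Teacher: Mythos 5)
Your proposal is correct and is essentially the paper's own argument, which simply observes that a monomial $\bar Y_{d_1,a_1}\cdots\bar Y_{d_s,a_s}$ contributing to the $z^{-p}$ coefficient has Kazhdan degree $p-s$, so only the $s=1$ terms survive in the top graded piece; you have just written out the bookkeeping (the substitution $\bar Y_{d_i,a_i}=f_{a_i,d_i}z^{\col(d_i)-\col(a_i)-1}$ and the collapse of the chaining constraints at $s=1$) that the paper leaves implicit.
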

\begin{proof}
   We just have to note that terms in the sums
   from Theorems \ref{T:LzRexplicit} and \ref{T:Lzexplicit}
   have highest degree when $s=1$.
\end{proof}

Our goal is to show that these elements generate $\g^f$.
Following \cite[$\S$2]{PrT},
for $i,j = 1,\dots, n$ and $t = 0, 1, \dots, \lambda_j-1$ we define
\begin{equation} \label{EQ:zeta}
  \zeta_i^{j,t} =
  \sum_{s=0}^t
  e_{i,s}^{j, s+t'}.
  +
  \sigma(e_{i,s}^{j, s+t'})
  =
  \sum_{s=0}^t
  f_{i,s}^{j, s+t'}.
  \in \g^f.
\end{equation}

\begin{Lemma} \label{L:zeta2}
Let $k$ be a highest weight for the $\mathfrak{sl}_2$-triple $(f, 2x, e)$.
Let $c,d \in \{1, \dots, N\}$ be such that
$\lambda_{\row(c)} = \lambda_{\row(d)} = 2k+1$,
$\col(c) = s_k$, and $\col(d) = e_k$.
Let $p \in \{1, \dots, \lambda_{\row(c)}\}$.
Let $i = \row(d)$ let $j = \row(c)$, and let $t = \lambda_i-p$.
Let $y$ be the coefficient of $z^{-p}$ in $L_k(z)_{c,d}$ from Theorem \ref{T:Lzexplicit}.  Then
\[
 \operatorname{gr}(y) = (-1)^{p+\lambda_i} \zeta_{i}^{j, t}
\]
\end{Lemma}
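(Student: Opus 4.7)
The plan is to read off $\operatorname{gr}(y)$ directly from Lemma \ref{L:gr} applied in the sub-pyramid for $L_k(z)$, reindex the resulting sum, and identify it with $\zeta_i^{j,t}$ using the $\sigma$-symmetry of the $f$-basis.

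First, applying Lemma \ref{L:gr} in the sub-pyramid context for $L_k(z)$ --- where the maximal row length is $r_k = \lambda_i$ and the columns lie in $[s_k, e_k]$ --- and using $\lambda_{\row(a_1)} = \lambda_i$ to see that the $n$-contribution vanishes, one obtains
\[
\operatorname{gr}(y) = -\sum (-1)^{\col(a_1) + \col(d_1) + \lambda_i}\, f_{a_1, d_1},
\]
summed over $a_1, d_1$ with $\row(d_1) = j$, $\row(a_1) = i$, $s_k \le \col(d_1) \le \col(a_1) \le e_k$, and $\col(a_1) - \col(d_1) = p - 1$. I would then parametrize by $r \in \{0, 1, \dots, \lambda_i - p\}$ via $\col(d_1) = s_k + r$ and $\col(a_1) = s_k + r + (p-1)$. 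The identification \eqref{EQ:conv} --- together with $\lambda_i = \lambda_j$ so the row offsets cancel --- converts each $f_{a_1, d_1}$ into $f_{j, r}^{i, r + p - 1}$, while the exponent $\col(a_1) + \col(d_1) + \lambda_i = 2 s_k + 2 r + (p-1) + \lambda_i$ reduces to $p - 1 + \lambda_i \pmod 2$, independent of $r$.

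Next, I would apply the reflection identity $f_{i,s}^{j,t} = -\eps\, \eta_{j \le j'}(t)\,\eta_{i' \le i}(s')\, f_{j',t'}^{i',s'}$ to rewrite each $f_{j, r}^{i, r + p - 1}$ so that the $(i,j)$-indices appear in the order of \eqref{EQ:zeta}. The hypothesis that all parts of the Jordan type have the same parity makes the involution $i \mapsto i'$ behave uniformly on the rows of interest; this forces the product of $\eta$-factors to depend only on $T + S' \equiv (r + p - 1) + (\lambda_j - 1 - r) \equiv p + \lambda_j \pmod 2$, which is constant in $r$. Substituting $u = \lambda_i - p - r$ and using $\lambda_i = \lambda_j$ turns the sum into $\sum_{u = 0}^{\lambda_i - p} f_{i, u}^{j, u + p - 1}$, which is precisely $\zeta_i^{j, t}$ with $t = \lambda_i - p$ and $t' = p - 1$.

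Finally, I would combine the three sign contributions --- the explicit $-1$ in front of the sum from Lemma \ref{L:gr}, the constant exponent $(-1)^{p - 1 + \lambda_i}$ from the parametrization, and the constant sign from the reflection symmetry --- and verify that they assemble into the claimed factor $(-1)^{p + \lambda_i}$. The main obstacle is this last sign bookkeeping: the reflection from $f_{j,\cdot}^{i,\cdot}$ to $f_{i,\cdot}^{j,\cdot}$ introduces $\eps$- and $\eta$-factors whose interaction with the Jordan-type parity is delicate, and only the equal-parity hypothesis guarantees that they telescope to a single $r$-independent sign so that the sum assembles cleanly into $\zeta_i^{j,t}$; the other steps are routine reindexing.
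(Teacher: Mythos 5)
Your first two steps are exactly the paper's argument, just run in the opposite direction: the paper starts from the terms $f_{i,s}^{j,s+t'}$ of \eqref{EQ:zeta}, converts them to pyramid coordinates via \eqref{EQ:conv}, and checks that their columns sweep out precisely the range of $(a_1,d_1)$ allowed in Lemma \ref{L:gr}, with the constant sign $-(-1)^{p-1+\lambda_i}=(-1)^{p+\lambda_i}$; you instead start from Lemma \ref{L:gr} and convert each $f_{a_1,d_1}$ back to $f_{j,r}^{i,r+p-1}$. Your parametrization, the vanishing of the $n$-term (the row of $a_1$ has the maximal length $r_k=\lambda_i$ in the subpyramid), and the computation $\col(a_1)+\col(d_1)+\lambda_i\equiv p-1+\lambda_i$ are all correct, and up to that point the two proofs coincide.

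The final reflection step is where your proposal does not close, and it is also unnecessary. The sum you have reached, $\sum_{r=0}^{t} f_{j,r}^{i,r+p-1}$ with $t'=p-1$, is \emph{already} a $\zeta$-element by the definition \eqref{EQ:zeta} --- it is $\zeta_{j}^{i,t}$, with the row of $c$ as the lower index and the row of $d$ as the upper index --- so no further rewriting is needed; the paper stops here (its statement and proof silently transpose the two row labels, which are interchangeable for the purposes of Theorem \ref{T:generate} since $\lambda_i=\lambda_j$ and $\zeta_a^{b,t}=\pm\zeta_{b'}^{a',t}$). If you do insist on applying the reflection $f_{j,r}^{i,T}=-\eps\,\eta\,\eta\,f_{i',T'}^{j',r'}$, note two problems you have glossed over: the output carries \emph{primed} indices, so you obtain a multiple of $\zeta_{i'}^{j',t}$, not of $\zeta_{i}^{j,t}$ --- you cannot drop the primes, since $i\neq i'$ in general even under the equal-parity hypothesis; and the prefactor $-\eps\,\eta_{i\le i'}(T)\,\eta_{j'\le j}(r')$ is a genuine extra sign that is not $+1$ in general and would have to be absorbed into the claimed $(-1)^{p+\lambda_i}$, a verification you explicitly defer. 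As written, then, the last third of your argument proves $\operatorname{gr}(y)=\pm(-1)^{p+\lambda_i}\zeta_{i'}^{j',t}$ rather than the displayed identity; the fix is simply to delete the reflection and recognize the reindexed sum as a $\zeta$-element directly.
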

\begin{proof}
We consider the terms
$f_{i,s}^{j,s+t'}$
in \eqref{EQ:zeta}.
Since $\lambda_j = \lambda_i$, by using the conversion formulas from \eqref{EQ:conv} we have that
$f_{i,s}^{j,s+t'} = f_{a,b}$ where $\row(a) = j$, $\col(a) = s+t'+(\lambda_1-\lambda_i)/2+1=s-t+(\lambda_1+\lambda_i)/2$, $\row(b) = i$, and
$\col(b) = s+(\lambda_1-\lambda_i)/2+1$.
So $\col(a)-\col(b) = -t-1+\lambda_i = p -1$.
Furthermore since $0 \leq s \leq t$, we have that
the minimum value for $\col(a)$ is
$-t+(\lambda_1+\lambda_i)/2 = p-\lambda_i+(\lambda_1+\lambda_i)/2 = p + (\lambda_1 - \lambda_i)/2=p+s_k-1$,
while the maximum value for $\col(a)$ is $(\lambda_1+\lambda_i)/2 = e_k$.
Since $\col(b) = \col(a)-p+1$, we have that the minimum value of
$\col(b)$ is $s_k$, while the maximum value is $e_k-p+1$.
So we are summing over all the values of $a_i, d_i$ from $\operatorname{gr}(y)$ in Lemma \ref{L:gr}.
Finally, we simply note that
$-(-1)^{\col(a_1,d_1)+\lambda_i} = -(-1)^{p-1+\lambda_i}$.

\end{proof}

A similar arguement gives the following lemma:
\begin{Lemma} \label{L:zeta1}
Let $k$ be a highest weight for the $\mathfrak{sl}_2$-triple $(f, 2x, e)$.
Let $l$ be the largest highest weight which is less than $k$.
So there exists $i,j$ such that $\lambda_i = 2l+1$
and $\lambda_j = 2k+1$.
Let $c,d \in \{1, \dots, N\}$ be such that
$\lambda_{\row(c)} = \lambda_i$, $\lambda_{\row(d)} = \lambda_j$,
$\col(c) = e_l$, and $\col(d) = e_k$.
Let $p \in \{1, \dots, \lambda_{j}\}$ such that
$p > k-l$
Let $t = (\lambda_i+\lambda_j)/2-p$.
Let $x$ be the coefficient of $z^{-p}$ in $L_{R;k}(z)_{c,d}$ from Theorem \ref{T:LzRexplicit} .
Then.
\[
 \operatorname{gr}(x) = (-1)^{e_l + 1 +p+\lambda_j} \zeta_{i}^{j, t}
\]
\end{Lemma}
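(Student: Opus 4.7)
The plan is to parallel the proof of Lemma \ref{L:zeta2} almost verbatim, with three substitutions: (i) the $L_R$ formula from Lemma \ref{L:gr} replaces the $L$ formula; (ii) all pyramid data is read inside the sub-pyramid of $f_k$, so $\lambda_1$ is replaced by $r_k=2k+1$, the leftmost column label by $s_k$, and $n$ by $n_k$; (iii) source and target rows now live in distinct row-lengths, namely $i=\row(c)$ of length $\lambda_i=2l+1$ and $j=\row(d)$ of length $\lambda_j=2k+1$.

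First, applying the adapted Lemma \ref{L:gr} I get
\[
\operatorname{gr}(x) \;=\; -\sum (-1)^{\col(c,a_1,d_1)+n_k(c)+r_k}\,f_{a_1,d_1},
\]
the sum being over pairs with $\row(d_1)=\row(c)$, $\row(a_1)=\row(d)$, $\col(a_1)-\col(d_1)+1=p$, $s_k<\col(d_1)\leq\col(a_1)$, and $\col(d_1)\leq \col(c)=e_l$ (the last condition coming from $\lambda_{\row(c)}<r_k$). Since $\lambda_{\row(c)}<r_k$, also $n_k(c)=1$.

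Next, I would use the conversion \eqref{EQ:conv} to rewrite each summand $f_{i,s}^{j,s+t'}$ of $\zeta_i^{j,t}$, with $t'=\lambda_j-1-t=p+k-l-1$, as $f_{a,b}$ with $\row(a)=j=\row(d)$, $\row(b)=i=\row(c)$, $\col(b)=s+s_l$, and $\col(a)=s+s_l+p-1$. Matching $a=a_1$, $b=d_1$, the identity $\col(a_1)-\col(d_1)=p-1$ is automatic.

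The crucial bookkeeping is checking that $s\in\{0,\dots,t\}$ matches the allowed range $\col(d_1)\in[s_l,\min(e_l,e_k-p+1)]$. A direct computation gives $s_l+t=e_k-p+1$, and the hypothesis $p>k-l$ is precisely what forces $t\leq 2l$, hence $s_l+t\leq e_l$. Thus $\col(d_1)$ runs over $\{s_l,s_l+1,\dots,s_l+t\}$, in bijection with $\{0,\dots,t\}$ via $\col(d_1)=s+s_l$. This is the main technical obstacle — getting the two endpoints to line up exactly depends on the $p>k-l$ assumption.

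Finally, for the sign: $\col(a_1)+\col(d_1)=2\col(d_1)+p-1$ has parity $p-1$, so the exponent $\col(c,a_1,d_1)+n_k(c)+r_k$ has parity $e_l+(p-1)+1+(2k+1)\equiv e_l+p+\lambda_j+1\pmod 2$. Thus the coefficient $-(-1)^{\,\cdot}$ in front of each matched $f_{a_1,d_1}$ equals $(-1)^{e_l+1+p+\lambda_j}$, which is the asserted scalar. The index conversion and sign bookkeeping are then routine, and the identification $\operatorname{gr}(x)=(-1)^{e_l+1+p+\lambda_j}\zeta_i^{j,t}$ follows.
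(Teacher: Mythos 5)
The paper offers no proof of this lemma beyond the remark that ``a similar argument'' to Lemma \ref{L:zeta2} works, and your proposal is a correct fleshing-out of exactly that argument. The substantive content --- extracting the $s=1$ terms of the adapted Theorem \ref{T:LzRexplicit} via Lemma \ref{L:gr}, converting indices by \eqref{EQ:conv}, and checking that the range $\col(d_1)\in[s_l,\,e_k-p+1]$ coincides with $s\in\{0,\dots,t\}$ precisely because $p>k-l$ --- is right, and you correctly identify that endpoint matching as the place where the hypothesis enters.

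The sign bookkeeping, however, contains two compensating slips. First, the displayed congruence is false: $e_l+(p-1)+1+(2k+1)\equiv e_l+p+1\pmod 2$, whereas $e_l+p+\lambda_j+1\equiv e_l+p\pmod 2$ because $\lambda_j=2k+1$ is odd. Second, in the next sentence you pass from the exponent $E$ to the coefficient as if it were $(-1)^E$ rather than $-(-1)^E$, dropping the leading minus sign in Lemma \ref{L:gr}. The two errors cancel, and you land on the asserted scalar $(-1)^{e_l+1+p+\lambda_j}=(-1)^{e_l+p}$, which is what a correct evaluation of your own exponent gives. Separately, note that you (like the statement of the lemma itself) evaluate the exponent of Theorem \ref{T:LzRexplicit} with a mixed convention: original column labels for $c,a_1,d_1$, but the intrinsic value $r_k$ for the ``$\lambda_1$'' term, which in the derivation is really $\col(d)=e_k$. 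The exponent contains an even number of column entries, so its parity is invariant under a uniform relabelling, but only if the labelling is consistent; the mixed choice introduces an extra factor $(-1)^{e_k-r_k}=(-1)^{s_k-1}$, so the literal sign can be off by $-1$ when $s_k$ is even. This is harmless for Theorem \ref{T:inUgf}, which only needs $\zeta_i^{j,t}$ up to a nonzero scalar, but you should state which convention you are using.
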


\begin{Theorem} \label{T:inUgf}
   Let $i,j = 1, \dots, n$ be such that either $\lambda_j = \lambda_i$
   or $\lambda_i$ is the largest part of the Jordan type of $f$ which is less than $\lambda_j$.
   Let $t \in \{0, \dots, \lambda_i -1 \}.$
   Then $\zeta_i^{j,t} \in \operatorname{gr}(U(\g,f))$.
\end{Theorem}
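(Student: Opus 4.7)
The plan is to realize each $\zeta_i^{j,t}$ as the associated graded of an explicit element of $U(\g,f)$, built from a coefficient of either $L_k(z)$ or $L_{k;R}(z)$ depending on which of the two hypotheses on $(i,j)$ holds. The two cases in the hypothesis correspond exactly to the settings of Lemmas \ref{L:zeta2} and \ref{L:zeta1}, so the argument splits cleanly along them.

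First, in the case $\lambda_i = \lambda_j$, I would set $k = (\lambda_i-1)/2$, which is a highest weight for the $\mathfrak{sl}_2$-triple $(f,2x,e)$ on $V$, and choose $c,d \in \{1,\dots,N\}$ with $\row(c)=j$, $\row(d)=i$, $\col(c)=s_k$, $\col(d)=e_k$, as in Lemma \ref{L:zeta2}. For each $t \in \{0,\dots,\lambda_i-1\}$, set $p := \lambda_i - t$, which ranges over $\{1,\dots,\lambda_i\}$. Let $y_p$ be the coefficient of $z^{-p}$ in $L_k(z)_{c,d}\bar 1$. Theorem \ref{T:Lk} places $y_p$ in $U(\g,f)$, and Lemma \ref{L:zeta2} identifies $\operatorname{gr}(y_p) = (-1)^{p+\lambda_i}\zeta_i^{j,t}$, giving the result in this case.

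Second, in the case $\lambda_i < \lambda_j$ with $\lambda_i$ the largest part of the Jordan type strictly less than $\lambda_j$, I would set $k=(\lambda_j-1)/2$ and $l=(\lambda_i-1)/2$. The hypothesis on $\lambda_i$ is precisely that $l$ is the largest highest weight of an $\mathfrak{sl}_2$-submodule of $V$ which is strictly less than $k$, matching the setup of Lemma \ref{L:zeta1}. Choose $c,d$ as in that lemma and, for $t \in \{0,\dots,\lambda_i-1\}$, set $p := (\lambda_i+\lambda_j)/2 - t$. Then $p > (\lambda_j-\lambda_i)/2 = k-l$, so the hypothesis of Theorem \ref{T:LRk} is satisfied and the coefficient $x_p$ of $z^{-p}$ in $L_{k;R}(z)_{c,d}\bar 1$ lies in $U(\g,f)$. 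Lemma \ref{L:zeta1} then gives $\operatorname{gr}(x_p) = \pm\zeta_i^{j,t}$, so $\zeta_i^{j,t} \in \operatorname{gr}(U(\g,f))$.

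The only piece of work beyond assembling these references is checking that the admissible $p$'s exhaust the full range $t \in \{0,\dots,\lambda_i-1\}$. This is arithmetic: in the equal-parts case, $p \in \{1,\dots,\lambda_i\}$ corresponds bijectively to $t \in \{0,\dots,\lambda_i-1\}$ via $p=\lambda_i-t$; in the unequal-parts case, the key alignment is that the constraint $p > k-l$ of Theorem \ref{T:LRk} translates under $t=(\lambda_i+\lambda_j)/2-p$ to exactly $t \leq \lambda_i-1$, and $t \geq 0$ corresponds to $p \leq (\lambda_i+\lambda_j)/2 \leq \lambda_j$, which is within the range permitted by Lemma \ref{L:zeta1}. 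This parameter matching is the only nontrivial point in the plan; once it is verified, the theorem follows immediately from the cited results.
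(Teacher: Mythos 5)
Your proposal is correct and follows essentially the same route as the paper: split into the two cases of the hypothesis, apply Theorem \ref{T:Lkv2} (equivalently \ref{T:Lk}) with Lemma \ref{L:zeta2} when $\lambda_i=\lambda_j$, and Theorem \ref{T:LRkv2} (equivalently \ref{T:LRk}) with Lemma \ref{L:zeta1} when $\lambda_i<\lambda_j$, checking that the substitutions $p=\lambda_i-t$ and $p=(\lambda_i+\lambda_j)/2-t$ sweep out exactly $t\in\{0,\dots,\lambda_i-1\}$ within the admissible range of $p$. The parameter-matching you flag as the only nontrivial point is precisely the one computation the paper's proof records.
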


\begin{proof}
In the case that $\lambda_i < \lambda_j$, if $p,l,k$ are as in Lemma \ref{L:zeta1}, note that
$p > k-l = (\lambda_j - \lambda_i)/2$,
so $t = (\lambda_j+\lambda_i)/2-p < \lambda_i$.
 Now the theorem follows from Theorems \ref{T:Lkv2} and \ref{T:LRkv2} as well as
 Lemmas \ref{L:zeta2} and \ref{L:zeta1}.
\end{proof}

Now the only thing needed to prove Theorem \ref{T:generate} is to show that
the elements from Theorem \ref{T:inUgf} generate $\g^f$.

The following lemma gives a basis for $\g^f$ and is proved in \cite[$\S$2]{PrT}:
\begin{Lemma} \label{L:basis3}
    The following a basis of $\g^f$:
    \begin{align*}
      &\{ \zeta_i^{i,t} \mid i < i', 0 \leq t < \lambda_i \} \cup \\
      & \quad
      \{ \zeta_i^{i,t} \mid i = i', 0 \leq t < \lambda_i, \lambda_i-t \text{ is even } \} \cup \\
      & \quad
      \{ \zeta_i^{i',t} \mid i \neq i', 0 \leq t < \lambda_i, \lambda_i-t \text{ is odd } \} \cup \\
      & \quad
      \{ \zeta_i^{j,t} \mid i < j \neq i', 0 \leq t < \lambda_i\}.
    \end{align*}
\end{Lemma}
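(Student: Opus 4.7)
My plan for proving Lemma \ref{L:basis3} is to follow the standard strategy of producing a candidate spanning set of $\g^f$, removing redundancies using the involution $\sigma$, and matching the count against the known dimension of $\g^f$.

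First, I would verify that every $\zeta_i^{j,t}$ appearing in the statement lies in $\g^f$. The actions $f\cdot e_{i,s}^{j,u} = e_{i,s}^{j,u+1}$ (zero when $u+1=\lambda_j$) and $e_{i,s}^{j,u}\cdot f = e_{i,s-1}^{j,u}$ (zero when $s=0$) make $[f, \zeta_i^{j,t}]$ telescope to the single term $e_{i,t}^{j, t+t'+1}$, which vanishes because $t+t'+1 = \lambda_j$. Membership in $\g$ (rather than just $\gl(V)$) is automatic from the $\sigma$-symmetrization built into $f_{i,s}^{j,u}$.

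Second, I would use the involution $i \mapsto i'$ and the identity $f_{a,b} = \pm f_{b',a'}$ to identify symmetries: $\zeta_i^{j,t}$ and $\zeta_{j'}^{i',t}$ are proportional, with sign computable from the $\eta$'s of Lemma \ref{L:J}. This justifies the restriction $i \leq j$, $j \neq i'$ in cases (1) and (4). For cases (2) and (3), I would unfold $\zeta_i^{j,t} = \sum_{s=0}^{t} f_{i,s}^{j, s+t'}$ and observe that when $j \in \{i,i'\}$ the involution pairs the summand at index $s$ with the one at index $t-s$; combined with Lemma \ref{L:f0}, this selects exactly the parity conditions ``$\lambda_i - t$ even'' in case (2) and ``$\lambda_i - t$ odd'' in case (3) as the conditions under which the self-paired middle term survives (and the total sum is nonzero).

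Third, I would establish linear independence by noting that the leading matrix units $e_{i,s}^{j, s+t'}$ appearing in distinct $\zeta_i^{j,t}$ of the listed family occupy pairwise-disjoint index blocks in $\gl(V)$, so any nontrivial relation among them reduces to a relation among distinct matrix units and therefore must be trivial. Finally, I would compare the cardinality of the listed family with the standard dimension formula for $\g^f$ in $\so_N$ or $\sp_N$; summing the four cases using the involution pairing on $\{1,\dots,n\}$ yields the correct total. The main obstacle will be the parity bookkeeping of Step 2 synchronized with Step 4 --- verifying that the vanishing and pairing constraints from Lemmas \ref{L:f0} and \ref{L:J} produce exactly the enumerated parity conditions, and that the resulting count matches $\dim \g^f$ in both the orthogonal and symplectic cases with no off-by-one error across the two values of $\eps$.
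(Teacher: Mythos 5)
The paper does not actually prove this lemma: it is quoted verbatim from Premet--Topley, with the one-line attribution ``proved in \cite[\S 2]{PrT}.'' So there is no in-paper argument to compare against, and your proposal should be judged as a reconstruction of the [PrT] argument --- which it essentially is: membership in $\g^f$ by telescoping $[f,\zeta_i^{j,t}]$, reduction of the index set via $\zeta_i^{j,t}=\pm\zeta_{j'}^{i',t}$, and a linear-independence-plus-dimension-count finish. Two places in your sketch need tightening before this would be a complete proof. First, for cases (2) and (3) the vanishing mechanism is not really about whether a ``self-paired middle term survives'' (when $t$ is odd there is no fixed point of $s\mapsto t-s$ at all); what matters is the global sign $\epsilon_0$ in the self-pairing $\zeta_i^{j,t}=\epsilon_0\,\zeta_i^{j,t}$ for $j\in\{i,i'\}$, computed from the $\eta$'s of Lemma \ref{L:J}: when $\epsilon_0=-1$ the summands cancel in pairs (and any fixed summand is killed by Lemma \ref{L:f0}), so $\zeta_i^{j,t}=0$, and this must be shown to happen exactly when the stated parity fails --- this is needed not just for non-redundancy but for the spanning claim, since an excluded nonzero $\zeta$ would break the count. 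Second, ``pairwise-disjoint index blocks'' is not literally true: distinct $t$'s give elements supported in the same $(i,j)$-block (on different diagonals $u-s=t'$), and each $\zeta_i^{j,t}$ also has a component in the reflected $(j',i')$-block; the correct statement is that the supports (sets of matrix units with nonzero coefficient) of distinct listed elements are disjoint, which does follow from the restrictions $i<j\neq i'$ etc., but requires the small case check. With those repairs and the standard formula for $\dim\g^f$ in types B, C, D, your strategy goes through.
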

Note that if $i>j$ and $0 \leq t< \lambda_j$, $\zeta_i^{j,t} = \pm \zeta_{j'}^{i',t}$.
So all of elements from Theorem \ref{T:inUgf} are in this basis (up to a sign) except the $\zeta_i^{j,t}$'s where there exists $k$ such that
$\lambda_i < \lambda_k < \lambda_j$.
However, the following lemma shows that these elements are generated by
the $\zeta_i^{j,t}$'s where there is no such $k$, thus completing the proof of
Theorem \ref{T:generate}.
\begin{Lemma}
Let $i, j, k \in \{1, \dots, n\}$ be such that
$\lambda_i < \lambda_k < \lambda_j$, and let $0 \leq s < \lambda_i$ and $0 \leq t < \lambda_k$.  Then
\[
  \left[ \zeta_i^{k, s}, \zeta_k^{j, t} \right] =
  - \zeta_i^{j, s+t - (\lambda_k-1)}
\]
\end{Lemma}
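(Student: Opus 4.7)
The plan is to expand both sides using the definition
\[
\zeta_i^{j,t}=\sum_{s=0}^{t} f_{i,s}^{j,\,s+t'}, \qquad t'=\lambda_j-1-t,
\]
and then invoke Lemma~\ref{L:rel1} term by term. First I would write
\[
[\zeta_i^{k,s},\zeta_k^{j,t}]=\sum_{a=0}^{s}\sum_{b=0}^{t}
\bigl[f_{i,a}^{k,\,a+\lambda_k-1-s},\;f_{k,b}^{j,\,b+\lambda_j-1-t}\bigr]
\]
and apply Lemma~\ref{L:rel1} to each inner commutator.

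The standing hypothesis $\lambda_i<\lambda_k<\lambda_j$, combined with the fact that the involution on row labels preserves the Jordan parts, forces $i,j,k,i',j',k'$ to be pairwise distinct in all the comparisons that matter; this immediately annihilates the first, third, and fourth terms of Lemma~\ref{L:rel1}. Only the second term survives, contributing
\[
-\delta_{b,\,a+\lambda_k-1-s}\,f_{i,a}^{j,\,b+\lambda_j-1-t}.
\]

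Next I would use the surviving delta to eliminate $b$. Setting $b=a+\lambda_k-1-s$, the constraint $b\ge 0$ reads $a\ge s+1-\lambda_k$ and is automatic because $s<\lambda_i<\lambda_k$; the constraint $b\le t$ reads $a\le s+t-(\lambda_k-1)$. The surviving upper index is
\[
b+\lambda_j-1-t=a+(\lambda_j-1)-\bigl(s+t-(\lambda_k-1)\bigr),
\]
which matches the defining formula for $\zeta_i^{j,\,s+t-(\lambda_k-1)}$ exactly. Thus the double sum collapses to $-\zeta_i^{j,\,s+t-(\lambda_k-1)}$, as claimed; when $s+t<\lambda_k-1$ the sum is empty and both sides vanish under the natural convention that $\zeta_i^{j,u}=0$ for $u<0$.

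The main obstacle, such as it is, lies in eliminating the three unwanted terms of Lemma~\ref{L:rel1}: one must identify which Kronecker factor involves primed indices and use $\lambda_i=\lambda_{i'}$ together with $\lambda_i<\lambda_k<\lambda_j$ to rule out each equality. Once that bookkeeping is done, the sign constants $\eps$ and $\eta_{\cdot\le\cdot'}$ in Lemma~\ref{L:rel1} play no role since they accompany only vanishing terms, and what remains is a transparent change of summation variable.
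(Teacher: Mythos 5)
Your proof is correct, but it takes a different route from the paper: the paper disposes of this lemma with a one-line citation to \cite[Lemma 5]{PrT}, whereas you give a self-contained verification from the definition \eqref{EQ:zeta} and the commutation relations of Lemma~\ref{L:rel1}. Your bookkeeping checks out: writing $\zeta_i^{k,s}=\sum_{a=0}^{s}f_{i,a}^{k,\,a+\lambda_k-1-s}$ and $\zeta_k^{j,t}=\sum_{b=0}^{t}f_{k,b}^{j,\,b+\lambda_j-1-t}$, the first, third and fourth terms of Lemma~\ref{L:rel1} carry the Kronecker factors $\delta_{i,j}$, $\delta_{i,k'}$ and $\delta_{j',k}$ respectively, and all vanish because $\lambda_{i'}=\lambda_i<\lambda_k<\lambda_j=\lambda_{j'}$ forces the relevant labels apart; the surviving term $-\delta_{b,\,a+\lambda_k-1-s}\,f_{i,a}^{j,\,b+\lambda_j-1-t}$ collapses, after eliminating $b$, to $-\sum_{a=0}^{u}f_{i,a}^{j,\,a+\lambda_j-1-u}$ with $u=s+t-(\lambda_k-1)$, where the upper limit is $u$ (rather than $s$) precisely because $t\le\lambda_k-1$, and this is $-\zeta_i^{j,u}$. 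Your explicit treatment of the degenerate case $s+t<\lambda_k-1$ (empty sum, with the convention $\zeta_i^{j,u}=0$ for $u<0$) is a point the paper's citation glosses over, though it is harmless for the application, which only uses $t=\lambda_k-1$. What the paper's approach buys is brevity and an external guarantee of the sign conventions; what yours buys is independence from \cite{PrT} and a check that Lemma~\ref{L:rel1}, as normalized in this paper, really does reproduce the cited formula.
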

\begin{proof}
 This is immediate from \cite[Lemma 5]{PrT}.
\end{proof}
Since in this lemma we can take $t = \lambda_k-1$ and $s$ to be arbitrary,
we therefore have that every element in the basis from Lemma \ref{L:basis3} is in $\operatorname{gr}(U(\g,f))$.
Thus we have proven Theorem \ref{T:generate}.

\end{document}